\documentclass[a4paper]{amsart}

\usepackage[utf8]{inputenc}
\DeclareUnicodeCharacter{00A0}{ }
\usepackage{amssymb}
\usepackage{MnSymbol}
\usepackage{enumitem}

\usepackage{ae}
\usepackage[utf8]{inputenc}
\numberwithin{figure}{section}

\usepackage{mathrsfs,a4wide}
\usepackage{esint}
\usepackage{color}
\usepackage[mathscr]{euscript}

\usepackage{import}

\usepackage[leqno]{amsmath}

\numberwithin{figure}{section}

\newtheorem{theorem}{Theorem}[section]
\newtheorem*{mtheorem}{Main Theorem}
\newtheorem{lemma}[theorem]{Lemma}
\newtheorem{proposition}[theorem]{Proposition}

\theoremstyle{definition}
\newtheorem{definition}[theorem]{Definition}

\newtheorem{remark}[theorem]{Remark}
\numberwithin{equation}{section}

\newcommand{\R}{\mathbb{R}}
\newcommand{\N}{\mathbb{N}}

\newcommand{\Ha}{\mathcal{H}}

\newcommand{\beq}{\begin{equation}}
\newcommand{\eeq}{\end{equation}}

\newcommand{\eps}{\varepsilon}
 
\newcommand{\la}{\langle}
\newcommand{\ra}{\rangle}
\newcommand{\Div}{\operatorname{div}}
\newcommand{\pa}{\partial}

\newcommand{\medint}{-\kern -,375cm\int}
\newcommand{\medintinrigo}{-\kern -,315cm\int}
\newcommand{\e}{\varepsilon}

\begin{document}

\title[Nonlocal mean curvature flow]{Short time existence of the classical solution to the fractional mean curvature flow}

\author{Vesa Julin}

\author{Domenico Angelo La Manna}

\keywords{}

\begin{abstract} 
We establish short-time existence of the smooth solution to the fractional mean curvature flow  when the initial set is bounded and  $C^{1,1}$-regular. We provide  the same result also  
for the volume preserving     fractional mean curvature flow.
\end{abstract}

\maketitle

\section{Introduction}

In this paper we study the motion of  sets by their  fractional  mean curvature from the classical point of view. In the problem  we are given a bounded regular  set $E_0 \subset \R^{n+1}$ and we 
let it  evolve to a family of smooth sets $(E_t)_{t \in (0,T]}$ according to the law where the normal velocity   at a point equals its  fractional mean curvature. More precisely this can be written as 
\begin{equation}
\label{flow}
V_t = -H_{E_t}^s \qquad \text{on } \, \pa E_t ,
\end{equation}
where $V_t$ denotes the normal velocity and $H_{E}^s$ is the  fractional mean  curvature of $E \subset \R^{n+1}$ with $s \in (0,1)$, given by 
\beq \label{def FMC}
H_{E}^s(x) := \text{p.v.} \,  \left( \int_{E^c} \frac{dy}{|y-x|^{n+1+s}}  - \int_{E} \frac{dy}{|x -y|^{n+1+s}} \right).
\eeq

One  motivation  to study \eqref{flow}  is   that it can be interpreted as the gradient flow of the fractional perimeter and  is therefore the evolutionary counterpart to the 
problem of minimizing the fractional perimeter. The stationary problem has received a lot of attention since the  work \cite{CRS}, where the authors study the regularity of 
 sets which locally minimize the fractional perimeter. By   \cite{CRS} and  \cite{BFV} we know that the local   perimeter minimizers are smooth up to  a small singular set which precise  dimension  is not known (see \cite{SV} and \cite{DPW} and the references therein).  The global isoperimetric problem in the Euclidian space is also well understood.   It is proven in \cite{FS} that  the ball is the solution to the fractional isoperimetric problem, and we even  have the sharp   quantification of the isoperimetric inequality  \cite{FFMMM, FMM}.  A related result is the generalization of the Alexandroff theorem \cite{CFSW, CFMN}, where the authors prove that the  ball is the only smooth compact set with constant fractional mean curvature.  Note that one does not need to assume the  set  to be connected, which is in contrast to the classical Alexandroff theorem.

Concerning  the evolutionary  problem, the first existence result  is due to  Imbert \cite{imb} who defines the \emph{viscosity solution} of \eqref{flow} and proves that it  exists for all times 
 and is unique.  This means that  the flow \eqref{flow} has a well defined weak solution which is unique up to fattening. In  \cite{CS} Caffarelli-Souganidis construct  weak solution by using threshold dynamics,  and   in \cite{CMP}   Chambolle-Morini-Ponsiglione use the  gradient flow structure   to construct weak solution by using the  minimizing movement scheme (see also \cite{CMP1}). 

The issue with  the weak solution is,  as observed in \cite{CSV1} (see also \cite{CDNV}),  that the flow may develop singularities even in the planar case. Cinti-Sinestrari-Valdinoci  \cite{CSV2} avoid singularities 
 by studying  the volume preserving fractional mean curvature flow
\begin{equation}
\label{flow vol}
V_t = -(H_{E_t}^s - \bar{H}_{E_t}^s) \qquad \text{on } \, \pa E_t ,
\end{equation}
 for convex initial sets, where  $\bar{H}_{E_t}^s = \fint_{\pa  E_t} H_{E_t}^s \, d \Ha^n$. By \cite{CNR}, the fractional mean curvature flow with a forcing term preserves convexity and thus  one  expects Huisken's result \cite{Hui2} to hold also in the fractional  case, i.e., the flow \eqref{flow vol} remains convex, does not develop singularities  and converges to the sphere. This is indeed the main result in  \cite{CSV2} under an additional regularity assumption. We remark that  also \eqref{flow} preserves the convexity and therefore one may expect a result similar to  \cite{Hui1}  to hold also for \eqref{flow}, but to the best of our knowledge no result in this direction exists. Finally we refer to \cite{SaeV} for interesting analysis of the smooth solution of \eqref{flow}.

Here we are interested in the classical solution of \eqref{flow} which means that the sets  $(E_t)_t$ are smooth and  diffeomorphic  to $E_0$. As we mentioned  we may expect 
the classical solution to exist only  for a short time interval $(0,T)$ as the flow will vanish in finite time or it  may develop singularities before that.  We prove the short time existence of the classical solution under the assumption that the initial set is $C^{1,1}$-regular, or $C^{1+s+\alpha}$-close to a $C^{1,1}$-regular set.      The same result holds also for the volume preserving flow  \eqref{flow vol} and  we choose to state the result for both cases simultaneously. 

\begin{mtheorem} \label{main thm}
Let $E_0 \subset \R^{n+1}$ be a bounded set such that $\pa E_0$ is a $C^{1,1}$-regular hypersurface and let $\alpha \in (0,1)$. There exists $T \in (0,1)$ such that the fractional mean curvature flow \eqref{flow}, and the volume preserving flow \eqref{flow vol}, has  a unique classical solution  $(E_t)_{t \in (0,T]}$ starting from $E_0$. The flow becomes instantaneously smooth, i.e.,  each surface $\pa E_t$  with $t \in (0,T]$ is $C^\infty$-hypersurface. Moreover, there is $\eps >0$ with the property that if the $C^{1+s+\alpha}$-distance between $E_0$ and $E$ is less than $\eps$, then the flow \eqref{flow} (and the flow \eqref{flow vol}) starting from $E$ also exists for the time interval $(0,T]$. 
\end{mtheorem}

We give a more quantitative statement of the main theorem in the last section in Theorem \ref{main thm num} and  in Theorem \ref{main thm num 2}.  We expect  the smooth solution of \eqref{flow}   to agree with the viscosity solution on the time interval $(0,T]$ but  we do not prove this.

The proof of the main theorem is based on Schauder estimates on parabolic equations. As in \cite{HP} we first  parametrize the flow \eqref{flow} by using the 'height' function over a smooth reference surface $\Sigma$, which is close to the initial surface. This leads us to solve the nonlinear 
nonlocal parabolic equation 
\beq \label{the EQ}
\pa_t u = \Delta^{\frac{1+s}{2}}u +  Q(x,u, \nabla u) - H_\Sigma^s\qquad \text{on } \, \Sigma \times (0,T],
\eeq
where $\Delta^{\frac{1+s}{2}}$ denotes the fractional Laplacian on $\Sigma$ and $Q$ is nonlinear.  Following the idea in \cite{ES2}  we prove that  the nonlinear term $Q$ in \eqref{the EQ} remains small 
 for a short time interval and the equation can thus be seen as a small perturbation of the fractional heat equation. We then  use Schauder estimates and a standard fixed point argument to obtain the existence of a solution which is $C^{1+s+\alpha}$-regular in space and  obtain the  smoothness by differentiating the equation.  The right hand side  of \eqref{the EQ} is essentially the parametrization of the fractional mean curvature.  Similarly as  in \cite{BFV} and \cite{DPW},  the main difficulty  in our analysis is due to  the complicated structure of  the nonlinear term $Q$, which makes it  challenging to estimate its $C^{k+\alpha}$-norm in a quantitative way.  In order to differentiate the equation \eqref{the EQ} multiple times we need effective notation and basic tools from    differential geometry.  We also point out that the $C^{1,1}$-regularity of the intial set causes  additional difficulties, because some of the constants in \eqref{the EQ} depend on the $C^{\alpha}$-norm of the curvature of $\Sigma$, which we cannot bound uniformly if we want $\Sigma$ to be close to $\pa E_0$. Finally another technical issue, although a minor one,  is that there is no  comprehensive Schauder theory for the  fractional heat equation  on  compact hypersurfaces   in the literature,   and therefore we have to prove the  appropriate estimates ourselves (see Theorem \ref{parabolic est}).   

The paper is organized as  follows. After a short preliminary section (Section 2) we derive the equation \eqref{the EQ} in Section 3  by using the parametrization by the height function (Proposition \ref{final equation}). In  Section 4  we study the spatial regularity properties of the  operator on the right-hand-side of \eqref{the EQ} and give the proof of the main theorem  in Section 5.  The Appendix contains the Schauder estimates for the  fractional heat equation with a forcing term on  compact hypersurfaces, which might be of independent interest.

\section{Preliminaries}

Throughout the paper we assume that $\Sigma \subset \R^{n+1}$ is a  smooth compact hypersurface, i.e., there is a smooth bounded set 
$G \subset  \R^{n+1}$ such that $\pa G  = \Sigma$. We choose $G$ as our reference set and  define the classical solution of \eqref{flow} (and \eqref{flow vol})  as in the case of the classical mean curvature flow \cite{MantegazzaBook}, i.e., we say that $(E_t)_{t \in (0,T]}$ is a classical solution of \eqref{flow} starting from $E_0$ if there exists a map $\Psi \in   C(\R^{n+1} \times [0,T]; \R^{n+1}) \cap C^\infty(\R^{n+1} \times (0,T]; \R^{n+1}) $ such that  $\Psi(\cdot, t)$ is a $C^{1+s}$-diffeomorphism for $t \in [0,T]$ and  smooth diffeomorphism for $t \in (0,T]$, with   $E_t = \Psi(G, t)$ for  $t \in (0,T]$, $E_0 = \Psi(G, 0)$ and  $E_t$ satisfies  \eqref{flow} (or \eqref{flow vol} in the volume preserving case).

\subsection{Geometric preliminaries}

We recall some basic  analysis  related to Riemannian manifolds. For an introduction to the topic we refer to \cite{Lee}, from where we also adopt our notation.  

Since $\Sigma$ is  embedded in $\R^{n+1}$  it has a natural metric $g$ induced by the Euclidian metric. Then $(\Sigma, g)$ is a Riemannian manifold and we denote the inner product on each tangent space $X, Y \in T_x \Sigma$ by $\la X, Y \ra$. We extend the inner product in a natural way for tensors. We denote by $\mathscr{T}(\Sigma)$ the smooth vector fields on $\Sigma$ and recall that for  $X \in \mathscr{T}(\Sigma)$ and $u \in C^\infty(\Sigma)$ the notation 
$Xu$ means the derivative of $u$ in direction of $X$. We emphasize that we assume every vector field to be smooth.

 We denote the Riemannian connection on  $\Sigma$ by $\nabla$ and  recall that  for a function $u \in C^\infty(\Sigma)$ the covariant derivative  $\nabla u $ is a $1$-tensor field defined for  $X  \in \mathscr{T}( \Sigma)$  as
\[
\nabla u(X)  = \nabla_X u = X u,
\]
i.e., the derivative  of $u$ in the direction of $X$.  The  covariant derivative  of  a smooth $k$-tensor field $F \in \mathscr{T}^k( \Sigma)$, denoted  by  $\nabla F$, is a $(k+1)$-tensor field    and is   defined for $ Y_1, \dots, Y_k, X \in \mathscr{T}( \Sigma)$ as 
\[
\nabla F(Y_1, \dots, Y_k, X) = (\nabla_X F)(Y_1, \dots, Y_k) ,
\]
where
\[
(\nabla_X F)(Y_1, \dots, Y_k) = X F(Y_1, \dots, Y_k) - \sum_{i=1}^k F(Y_1, \dots,  \nabla_X Y_i ,\dots, Y_k).
\]
Here $\nabla_X Y$ is  the covariant derivative of $Y$ in the direction of $X$ (see \cite{Lee}) and since $\nabla$ is the Riemannian connection it holds  $\nabla_X Y = \nabla_Y X  + [X,Y]$ for every $X, Y \in \mathscr{T}( \Sigma)$.

We denote the $k$th order  covariant derivative of a smooth function  $u \in C^\infty(\Sigma)$ by $\nabla^k u$, which   is a $k$-tensor field defined recursively as $\nabla^k u = \nabla (\nabla^{k-1} u)$.  Let  $X_1, \dots, X_k \in \mathscr{T}( \Sigma)$ be vector fields on $\Sigma$. Then $\nabla^k u(X_1, \dots, X_k)$ denotes the covariant derivative applied to   $X_1, \dots, X_k$ and we often  use the  notation 
\[
\nabla_{X_k} \cdots \nabla_{X_1} u = \nabla^k u(X_1, \dots, X_k).
\]

We use the fact that  $\Sigma$ is embedded in $\R^{n+1}$ and define the sup-norm and  the $\alpha$-H\"older norm, for $\alpha \in (0,1)$,  of a function $u \in C(\Sigma)$ in a standard way,
\[
\|u\|_{C^0(\Sigma)} :=  \sup_{x \in \Sigma} |u(x)|
\]
and
\[
\|u\|_{C^\alpha(\Sigma)}  := \sup_{\substack{x, y \in \Sigma \\ x \neq y}}  \frac{|u(y) - u(x)|}{|y-x|^\alpha} +  \|u\|_{C^0(\Sigma)}. 
\]
Moreover, we set $\|u\|_{C^k(\Sigma)}  := \sum_{l =0}^k  \|\nabla^l u\|_{C^0(\Sigma)}$   for all $k \in \N$. We define further  the $\alpha$-H\"older norm  of a $k$-tensor field $F \in \mathscr{T}^k( \Sigma)$ by 
\[
\|F\|_{C^\alpha(\Sigma)} := \sup \{  \|F(X_1, \dots, X_k)\|_{C^\alpha(\Sigma)} : X_i   \in \mathscr{T}( \Sigma)  \,\, \text{with}\,\, \| X_i \|_{C^1(\Sigma)} \leq 1 , \,\, i = 1 \dots, k \} .
\]
It is straightforward to check that this agrees with the more standard definition via partition of unity.  Using this we define the $C^{k + \alpha}$- norm of function $u$,  with $k \in \mathbb{N}$ and $ \alpha\in [0,1) $, as
\[
\|u\|_{C^{k+\alpha}(\Sigma)} : =  \sum_{l=0}^k \| \nabla^l u\|_{C^{\alpha}(\Sigma)}.
\]
We use the notation $u \in C^{k+\alpha}(\Sigma)$ for a function $u$ with bounded  $C^{k+\alpha}(\Sigma)$-norm when $\alpha \in (0,1)$  and $u \in C^{k,1}(\Sigma)$ when $\alpha = 1$. We assume that $\Sigma = \pa G$ is uniformly $C^{1,1}$-regular surface and define its $C^{1,1}$-norm as the smallest number $R$ such that $G$ satisfies the interior and the exterior ball condition with radius $1/R$.  Note that this norm also  bounds $ \|\nu\|_{C^{1}(\Sigma)}$.  If a constant depends on the $C^{1,1}$-norm of $\Sigma$ we choose not  to mention it and call such a constant \emph{uniform}.

We recall the following  interpolation inequality.  The proof is essentially the same as \cite[Lemma 6.32]{GT} (see also \cite{AubinBook2}).
\begin{lemma}
\label{aubinlemma} 
Assume $\Sigma$ is a compact $C^{1,1}$-hypersurface and let $s \in (0,1)$ and   $\alpha \in (0,1-s)$. For every  $\delta \in (0,1)$ there is $C_\delta>0$ such that for  $u \in C^{1+s+\alpha}(\Sigma)$ it holds
\[
\|u\|_{C^{1 +\alpha}(\Sigma)} \leq \delta \|u\|_{C^{1+s+\alpha}(\Sigma)} + C_\delta \|u\|_{C^{0}(\Sigma)}.
\]
\end{lemma}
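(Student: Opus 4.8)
The plan is to deduce the inequality from the classical Euclidean interpolation inequality between $C^0$, $C^{1+\alpha}$ and $C^{1+s+\alpha}$ on a ball --- which is exactly \cite[Lemma 6.32]{GT} --- by localizing $\Sigma$ in graph coordinates and then patching the local estimates together. Since this is a standard interpolation statement, the only points that really deserve attention are that $\Sigma$ is merely $C^{1,1}$ and that the constant $C_\delta$ must come out \emph{uniform}, i.e.\ depend only on $\delta$, $n$, $s$, $\alpha$ and the $C^{1,1}$-norm $R$ of $\Sigma$ --- in particular not on the diameter of $\Sigma$, which $R$ does not control.

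First I would fix $r>0$ comparable to $1/R$ so that around every point $x\in\Sigma$ the surface is, in suitable orthonormal coordinates, the graph over $B_r\subset\R^n$ of a function $\phi_x$ with $\phi_x(0)=0$, $\nabla\phi_x(0)=0$ and $\|\phi_x\|_{C^{1,1}(B_r)}\le C(R)$; write $\Phi_x\colon B_r\to\Sigma$ for the resulting parametrization, which is bi-Lipschitz with a constant depending only on $R$. The reason $C^{1,1}$-regularity suffices here is the hypothesis $\alpha<1-s$: it forces $1+s+\alpha<2$, so composing with $\Phi_x$ or $\Phi_x^{-1}$ never costs more than one derivative of $\phi_x$ together with a H\"older modulus. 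Hence, for $u\in C^{1+s+\alpha}(\Sigma)$, the pullback $u\circ\Phi_x$ lies in $C^{1+s+\alpha}(B_r)$ with $\|u\circ\Phi_x\|_{C^{1+s+\alpha}(B_r)}\le C(R)\,\|u\|_{C^{1+s+\alpha}(\Sigma)}$; conversely --- and this is where I would use that $\|\cdot\|_{C^{1+\alpha}(\Sigma)}$ is assembled entirely out of suprema (the sup-norms of $u$ and $\nabla u$ and the H\"older seminorms, which are again suprema) --- bounding $\|u\|_{C^{1+\alpha}(\Sigma)}$ reduces to bounding $\|u\circ\Phi_x\|_{C^{1+\alpha}(B_{r/2})}$ for every $x$, plus the crude contribution $C(R)\big(\|u\|_{C^0(\Sigma)}+\|\nabla u\|_{C^0(\Sigma)}\big)$ coming from pairs of points at distance $\ge r/2$. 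Thus one argues chart by chart and then takes a supremum, never a sum, which is what keeps the number of charts --- equivalently, the diameter of $\Sigma$ --- out of the final constant.

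Next I would invoke (or reprove directly) the Euclidean estimate: for $v\in C^{1+s+\alpha}(B_r)$ and $\delta\in(0,1)$,
\[
\|v\|_{C^{1+\alpha}(B_{r/2})}\ \le\ \delta\,\|v\|_{C^{1+s+\alpha}(B_r)}+C_{\delta,r}\,\|v\|_{C^0(B_r)} .
\]
A quick proof: the identity $\partial_e v(x)=\tfrac1h\big(v(x+he)-v(x)\big)-\tfrac1h\int_0^h\big(\partial_e v(x+te)-\partial_e v(x)\big)\,dt$ gives $|\nabla v(x)|\le C\big(h^{-1}\|v\|_{C^0}+[\nabla v]_{C^{s+\alpha}}\,h^{s+\alpha}\big)$ for $0<h<r/2$, and choosing $h$ suitably small in terms of $\delta$ yields $\|\nabla v\|_{C^0(B_{r/2})}\le\delta\|v\|_{C^{1+s+\alpha}(B_r)}+C_{\delta,r}\|v\|_{C^0(B_r)}$; then splitting $|\nabla v(x)-\nabla v(y)|\,|x-y|^{-\alpha}$ according to $|x-y|<\rho$ (where writing $|x-y|^{-\alpha}=|x-y|^{-s-\alpha}|x-y|^{s}$ produces the small factor $\rho^{s}$) or $|x-y|\ge\rho$ (bounded by $2\rho^{-\alpha}\|\nabla v\|_{C^0}$, into which the previous bound is fed) controls $[\nabla v]_{C^\alpha}$, and the same splitting controls $[v]_{C^\alpha}$ and $\|v\|_{C^\alpha}$; adding the three bounds gives the displayed inequality. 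A compactness--contradiction argument, using that $C^{1+s+\alpha}(B_r)$ embeds compactly into $C^{1+\alpha}(B_r)$ precisely because $s>0$, is an equally short alternative.

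Finally I would patch: applying the Euclidean inequality in every chart with parameter $\delta'\simeq\delta/C(R)$ controls all ``near-diagonal'' difference quotients of $u$ by $\delta\|u\|_{C^{1+s+\alpha}(\Sigma)}+C_\delta\|u\|_{C^0(\Sigma)}$, while the leftover ``far-diagonal'' contributions involve $\|u\|_{C^0(\Sigma)}$ and $\|\nabla u\|_{C^0(\Sigma)}$; the latter is reabsorbed by feeding in the estimate $\|\nabla u\|_{C^0(\Sigma)}\le\delta''\|u\|_{C^{1+s+\alpha}(\Sigma)}+C_{\delta''}\|u\|_{C^0(\Sigma)}$ --- itself just the Euclidean gradient interpolation read in one chart, hence with $R$-uniform constants --- with $\delta''$ small enough to absorb the factor $C(R)$. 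I expect the only part of this that amounts to more than bookkeeping to be precisely the uniformity of $C_\delta$: one must both refrain from summing over charts (handled by the sup-structure of all the norms involved) and check that the Lipschitz graph functions $\phi_x$ are never differentiated twice, which is exactly the role of the assumption $\alpha<1-s$.
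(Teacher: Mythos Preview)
Your proposal is correct and is precisely the argument the paper has in mind: the paper does not give a proof but simply refers to \cite[Lemma~6.32]{GT} (and \cite{AubinBook2}), and your plan---pull back to graph charts, apply the Euclidean interpolation of \cite{GT}, and patch via the sup-structure of the H\"older norms---is exactly how one fleshes out that reference, with the observation $1+s+\alpha<2$ ensuring that $C^{1,1}$-regularity of $\Sigma$ suffices. Your additional care that $C_\delta$ depends only on the $C^{1,1}$-norm $R$ (and not on the diameter of $\Sigma$) is a correct and worthwhile refinement, handled just as you say by never summing over charts.
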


Observe  that  $\nabla^2 u$ is  symmetric, i.e., $\nabla^2 u (X, Y) = \nabla^2 u (Y, X)$ for every vector fields $X$ and $Y$, while $\nabla^k u$ for $k \geq 3$ is not. From the definition we see that for  $X_1, \dots, X_k \in \mathscr{T}( \Sigma)$ with $\|X_i\|_{C^{k+2}(\Sigma)} \leq 1$, $i = 1, \dots, k$  it holds
\beq \label{vaihto kikka}
\nabla^k u (X_1, \dots,  X_i ,\dots, X_j, \dots, Y_k)  =  \nabla^k  u(X_1, \dots,  X_j ,\dots, X_i, \dots, Y_k) + \pa^{k-1} u.
\eeq
Here the notation $\pa^{l} u$ stands for  a function which satisfies 
\beq \label{pa nota}
\|\pa^{l} u\|_{C^{\gamma}(\Sigma)} \leq C_{l,\gamma} \|u\|_{C^{l+\gamma}(\Sigma)} 
\eeq
 for any $\gamma \in (0,2)$. Note also  that  in general  $\nabla_Y \nabla_X u \neq \nabla_Y (\nabla_X u )$ for  $X, Y \in \mathscr{T}(\Sigma)$ since  $\nabla_Y (\nabla_X u ) =Y(X u) = YX u$ and $\nabla_Y \nabla_X u = YX u- (\nabla_{Y}X) u $.  On the other hand if $X_1, \dots, X_k$ are vector fields  with $\|X_i\|_{C^{k+2}(\Sigma)} \leq 1$, $i = 1, \dots, k$,  then it holds
\beq \label{derivointi kikka}
\nabla_{X_k}  \cdots \nabla_{X_1} u = X_k \cdots X_1 u + \pa^{k-1} u,
\eeq
where $\pa^{k-1} u$ denotes a function which satisfies \eqref{pa nota}. It is then straightforward to check  that  for any $\gamma \in (0,2)$  it holds
\beq \label{normi kikka}
\begin{split}
\sup \{  \| X_k \cdots X_1  u\|_{C^\gamma(\Sigma)} &:  X_i \in \mathscr{T}( \Sigma)  \, ,\,  \|X_i\|_{C^{k+2}(\Sigma)} \leq 1 \,, \, i = 1, \dots, k\} \\
&\geq   \frac{1}{C_k} \|u\|_{C^{k +\gamma}(\Sigma)} - C_k \|u\|_{C^{k-1 +\gamma}(\Sigma)} ,
\end{split}
\eeq
where  $C_k$ depends on $k$.

 We may use the fact that $\Sigma$ is embedded in $\R^{n+1}$  to extend any function $F \in C^1(\Sigma; \R^m)$ to  $\tilde F \in C^1(\R^{n+1}; \R^m)$ such that $\tilde F = F$ on $\Sigma$.  We  define  the \emph{tangential differential}  of $F$ by 
\[
D_\tau F(x) = D \tilde F(x) (I - \nu(x)\otimes \nu(x)) ,
\]
where $\nu$  denotes the unit outer normal of $\Sigma = \pa G$ (outer with respect to $G$). We denote $\nu_\Sigma$ if we want to be emphasize that the normal is related to $\Sigma$ and  denote by  $\nu_E(x)$  the  normal of a generic set $E$.  It is clear that $D_\tau F(x)$ does not depend on the chosen extension. With a slight abuse of notation we denote the tangential gradient of $u \in C^\infty(\Sigma)$ at $x $ also by $D_\tau u(x)$, even if it is a vector in $\R^{n+1}$. 

We may use the embedding to  associate the tangent space  $T_x \Sigma$ with the linear subspace $\{ p \in \R^{n+1} :  p \cdot \nu(x) = 0\} $ by the relation 
\[
v(u) = D_\tau u(x) \cdot p \qquad \text{for all } \, u \in C^\infty(\Sigma),
\]
where $v \in T_x \Sigma$, i.e., a derivation at $x$,  and $p \in \R^{n+1}$ with $p \cdot \nu(x) = 0$. The components of the vector $p$ are then given by $p_i = v(x_i)$.  Indeed, by  'tangent space' we usually mean the geometric tangent space, i.e.,   a linear subspace of $\R^{n+1}$, but for clarity we use  '$\cdot$' for the standard inner product of two vectors in $\R^{n+1}$ while '$\la \cdot, \cdot \ra$' denotes the inner product on the tangent space. Similarly we may associate a smooth  vector field $X \in \mathscr{T}(\Sigma)$ with the vector valued function  $\tilde{X} \in C^\infty(\Sigma, \R^{n+1})$ which satisfies $\tilde{X}(x) \cdot \nu(x) = 0$ for all $x \in \Sigma$ and 
\[
\nabla_X u = D_\tau u \cdot \tilde{X} \qquad \text{for all } \, u \in C^\infty(\Sigma).
\]
Therefore, by a vector field $X$ we usually mean a vector valued function which values are on the (geometric) tangent space, $X \cdot \nu = 0$, with the convention that $X u $ denotes the derivative of $u$ in direction of $X$.   It is also clear that the tangential gradient of $u \in C^\infty(\Sigma)$ is equivalent to its covariant derivative and  for every $\alpha \in (0,1)$ it holds
\[
\frac{1}{C}\|u\|_{C^{1+\alpha}(\Sigma)} \leq \|D_\tau u\|_{C^{\alpha}(\Sigma)} + \|u\|_{C^0(\Sigma)} \leq C \|u\|_{C^{1+\alpha}(\Sigma)}.
\]

We denote the divergence of a vector field  $X \in  \mathscr{T}( \Sigma)$  by  $\Div X$ and the divergence theorem states 
\[
\int_{\Sigma} \Div X \,  d\Ha^{n} = 0. 
\]
For clarity we denote  the divergence of a vector valued function $F  \in C^\infty( \R^{n+1},  \R^{n+1})$  in $\R^{n+1}$ by $\Div_{\R^{n+1}}F$.  We may extend the definition of divergence to vector valued functions  $\tilde X \in C^\infty(\Sigma, \R^{n+1})$ by $\Div \tilde{X} := \text{Trace}(D_\tau \tilde{X})$. Then the divergence theorem generalizes  to 
\[
\int_{\Sigma} \Div \tilde{X} \,  d\Ha^{n} = \int_{\Sigma} H_\Sigma \,  \tilde{X}\cdot \nu  \,  d\Ha^{n},
\]
where $H_\Sigma$ denotes the mean curvature of $\Sigma$, which   is the sum of the principal curvatures.

\subsection{Fractional Laplacian}

We define the fractional Laplacian on $\Sigma$ as
\[
\Delta^{\frac{s+1}{2}} u(x) := 2  \int_{\Sigma}\frac{u(y)- u(x)}{|y-x|^{n+1+ s}} \, d \Ha_y^{n}.
\]
This should be understood in principal  valued sense, but from now on we  assume this without further mention. It is not difficult to see, and it actually follows from  Proposition \ref{change order}, that  if $u \in C^{\infty}(\Sigma)$  then  $\Delta^{\frac{s+1}{2}} u$ is a  smooth function on $\Sigma$. It is well known \cite{DPW, FFMMM} that by linearizing 
the fractional mean curvature at $\Sigma$ one obtains the following   Jacobi operator 
\beq \label{linear op}
L[u](x) : =   \Delta^{\frac{s+1}{2}} u(x)  + c_{s}^2(x)  \, u(x) ,
\eeq
where 
\[
c_{s}^2(x) = \int_{\Sigma}\frac{|\nu(y)- \nu(x)|^2 }{|y-x|^{n+1+s}}  \, d \Ha_y^{n}.
\]
We note that since $\Sigma$ is a smooth surface,  $c_{s}^2(\cdot)$ defines a smooth function on $\Sigma$. Again this is  not difficult to see and it follows 
from our analysis in Section 4. Moreover, since we assume $\Sigma$ is uniformly  $C^{1,1}$-regular, the $\alpha$-H\"older norm of $c_{s}^2$, for small $\alpha$, is uniformly  bounded (see Lemma \ref{lemma aux 2}).


As we mentioned in the introduction, the proof of the main theorem is based on regularity estimates for  nonlinear nonlocal parabolic equation. To this aim we need standard Schauder estimates for the fractional heat equation with a forcing term 
\beq  \label{parabolic eq}
\begin{cases}
&\partial_t u = \Delta^{\frac{s+1}{2}} u + f(x,t) +g(x) \qquad \text{on } \, \Sigma \times (0,T] \\
&u(x,0) = u_0(x)  \qquad \text{for  } \, x \in \Sigma.
\end{cases}
\eeq

 We prove the following Schauder estimate. We give the proof in the Appendix. 
\begin{theorem}
\label{parabolic est}
Assume that $f : \Sigma \times [0,T] \to \R$  and $u_0, g  :\Sigma \to \R$ are smooth and fix $\alpha \in (0,1-s)$. Then  \eqref{parabolic eq} has a unique smooth solution and it  holds 
\[
\sup_{0<t<T} \|u(\cdot,t)\|_{C^{1+s+\alpha}(\Sigma)} \leq C(1+T) \big( \|u_0\|_{C^{1+s+\alpha}(\Sigma)} +  \sup_{0<t<T}\|f(\cdot,t)\|_{C^{\alpha}(\Sigma)} + T \|g\|_{C^{1+s+\alpha}(\Sigma)} \big)
\]
and
\[
\sup_{0<t<T} \|u(\cdot,t)\|_{C^{0}(\Sigma)} \leq  \|u_0\|_{C^{0}(\Sigma)} + T \big(  \sup_{0<t<T}\|f(\cdot,t)\|_{C^{0}(\Sigma)} + \|g\|_{C^{0}(\Sigma)} \big)
\]
\end{theorem}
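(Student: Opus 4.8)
The plan is to prove Theorem \ref{parabolic est} by reducing the fractional heat equation on $\Sigma$ to the classical Euclidean setting via a partition of unity, or better, by working directly with the semigroup generated by $\Delta^{\frac{s+1}{2}}$ on the compact manifold $\Sigma$. First I would set up the functional-analytic framework: the operator $A := -\Delta^{\frac{s+1}{2}}$ is a nonnegative self-adjoint operator on $L^2(\Sigma)$ with discrete spectrum, and it generates an analytic semigroup $(e^{-tA})_{t \geq 0}$. The solution of \eqref{parabolic eq} is then given by Duhamel's formula
\beq \label{duhamel plan}
u(\cdot,t) = e^{-tA} u_0 + \int_0^t e^{-(t-\tau)A}\big(f(\cdot,\tau) + g\big) \, d\tau .
\eeq
The $C^0$-estimate is the easy part: $e^{-tA}$ is a contraction on $C^0(\Sigma)$ (it is a Markov semigroup, since the kernel defining $\Delta^{\frac{s+1}{2}}$ is nonnegative off the diagonal and the maximum principle holds), so taking sup-norms in \eqref{duhamel plan} and using $\|e^{-(t-\tau)A}h\|_{C^0} \leq \|h\|_{C^0}$ immediately gives the second displayed inequality.

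For the Schauder estimate the key is the smoothing property of the semigroup in the scale of Hölder spaces $C^{\beta}(\Sigma)$. The heart of the matter is the bound
\[
\|e^{-tA} h\|_{C^{1+s+\alpha}(\Sigma)} \leq \frac{C}{t}\, \|h\|_{C^{\alpha}(\Sigma)} \qquad \text{for } t \in (0,1],
\]
together with $\|e^{-tA}h\|_{C^{1+s+\alpha}(\Sigma)} \leq C\|h\|_{C^{1+s+\alpha}(\Sigma)}$. Granting these, apply \eqref{duhamel plan}: the $u_0$-term is controlled by $C\|u_0\|_{C^{1+s+\alpha}}$, the $g$-term by $\int_0^t \frac{C}{t-\tau}\|g\|_{C^{\alpha}}\,d\tau$ — but this logarithmically diverges, so instead one keeps $g$ in $C^{1+s+\alpha}$ and uses the non-smoothing bound, giving $\int_0^t C\|g\|_{C^{1+s+\alpha}}\,d\tau = Ct\|g\|_{C^{1+s+\alpha}}$, which matches the claimed $T\|g\|_{C^{1+s+\alpha}}$; and the $f$-term is handled by splitting the Duhamel integral and using the standard parabolic trick of subtracting and adding $f(\cdot,t)$, i.e. writing $\int_0^t e^{-(t-\tau)A}f(\cdot,\tau)\,d\tau = \int_0^t e^{-(t-\tau)A}(f(\cdot,\tau)-f(\cdot,t))\,d\tau + \int_0^t e^{-(t-\tau)A}f(\cdot,t)\,d\tau$ — the first piece uses continuity of $f$ in time together with the smoothing bound, and the second is again the non-smoothing estimate integrated against a constant-in-$\tau$ function. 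One must be slightly careful with the time-continuity modulus; if $f$ is merely bounded in $C^\alpha$ uniformly in time then one instead localizes the singular integral near $\tau = t$ and uses the $1/(t-\tau)$ smoothing against $\|f(\cdot,\tau)\|_{C^\alpha}$ on a short interval plus the non-smoothing bound elsewhere, which still yields $\sup_\tau\|f(\cdot,\tau)\|_{C^\alpha}$ up to the factor $(1+T)$.

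To establish the smoothing estimates themselves, I would proceed by localization. Cover $\Sigma$ by finitely many coordinate charts; in each chart the operator $\Delta^{\frac{s+1}{2}}$ differs from a constant-coefficient fractional Laplacian $(-\Delta)^{\frac{1+s}{2}}$ on $\R^n$ by lower-order terms (a zeroth-order multiplication plus an operator that is smoothing of order strictly less than $1+s$, coming from the curvature of $\Sigma$ and the difference between geodesic and chordal distance). For the model operator on $\R^n$ the heat kernel $p_t(x)$ of $e^{-t(-\Delta)^{(1+s)/2}}$ is explicit up to scaling, $p_t(x) = t^{-n/(1+s)}P(t^{-1/(1+s)}x)$ with $P$ smooth, decaying like $|x|^{-n-1-s}$, and one reads off $\|\nabla^k p_t\|_{L^1} \leq C t^{-k/(1+s)}$; convolution estimates in Hölder spaces then give the required bounds, and existence/uniqueness of the smooth solution follows from spectral theory plus bootstrapping the Schauder estimate (differentiating the equation, as the authors do elsewhere). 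The main obstacle is the perturbation argument: one must verify that the lower-order remainder in each chart is genuinely subordinate to $\Delta^{\frac{1+s}{2}}$ in the relevant Hölder scale \emph{uniformly}, so that a Neumann-series / fixed-point argument closes — and here the fact that $\Sigma$ is only assumed $C^{1,1}$ (so its curvature is merely $L^\infty$, with $C^\alpha$-norm not uniformly controlled) is exactly the technical annoyance flagged in the introduction, forcing one to track carefully which constants are uniform and to absorb the curvature-dependent pieces using the interpolation Lemma \ref{aubinlemma}.
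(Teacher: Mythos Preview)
Your semigroup/Duhamel route is a genuine alternative to the paper's argument, and the $C^0$ part is fine (your contraction/Markov-semigroup reasoning is equivalent to the paper's direct maximum-principle argument, which splits $u=v+w$ and applies the maximum principle to $t^{\eps-1}w$). But the Schauder part has a real gap in the $f$-term.

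The two operator-norm bounds you state, $\|e^{-tA}\|_{C^\alpha\to C^{1+s+\alpha}}\le C/t$ and $\|e^{-tA}\|_{C^{1+s+\alpha}\to C^{1+s+\alpha}}\le C$, are \emph{not} enough to control $\int_0^t e^{-(t-\tau)A}f(\cdot,\tau)\,d\tau$ in $C^{1+s+\alpha}$ by $\sup_\tau\|f(\cdot,\tau)\|_{C^\alpha}$ alone: the smoothing bound gives the divergent integral $\int_0^t(t-\tau)^{-1}\,d\tau$, and the non-smoothing bound would need $f\in C^{1+s+\alpha}$. Your proposed fixes do not close: subtracting $f(\cdot,t)$ brings in a time-modulus of $f$ that enters the constant (so the estimate no longer depends only on $\sup_\tau\|f(\cdot,\tau)\|_{C^\alpha}$), and your short/long-interval split still has one divergent piece regardless of which bound you use on which subinterval. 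What is actually needed is either continuous maximal regularity in H\"older scales (Da Prato--Grisvard/Lunardi machinery, identifying $C^\alpha$ and $C^{1+s+\alpha}$ as the correct real-interpolation spaces for $A$) or pointwise heat-kernel estimates used directly on H\"older differences, not just operator norms. This is exactly the content of the Euclidean result the paper imports as a black box (Theorem~\ref{parabolic estflat}, from \cite{MP}).

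For comparison, the paper bypasses semigroups entirely: it reduces to $u_0=g=0$, fixes a point, writes $\Sigma$ locally as a graph $x_{n+1}=\phi(x')$, and rewrites the equation for the cut-off $w=\zeta u$ on $\R^n$ as the flat fractional heat equation plus remainders $F,G_1,G_2,G_3$. It then applies the Euclidean Schauder estimate of \cite{MP} and absorbs the remainders using the kernel machinery already built in Section~4 (Lemmas~\ref{lemma aux} and \ref{kernel lemma} and the estimate~\eqref{tarviit}). A point worth noting: the principal remainder $F$ is \emph{not} lower order as you suggest; it is of the same order $1+s$ but carries the small factor $\delta\sim\|\phi\|_{C^{1+s+\alpha}(B_r)}\le Cr^{1-s-\alpha}$, and it is this smallness (from shrinking the chart radius $r$) that lets one absorb $C\delta\|w\|_{C^{1+s+\alpha}}$ into the left-hand side. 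A covering argument and interpolation (Lemma~\ref{aubinlemma}) then finish the proof.
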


The second statement is in fact a simple consequence of the maximum principle. 


\section{Parametrization of the flow \eqref{flow}}

In this section we follow  \cite{HP} (see also \cite{MantegazzaBook}) and    parametrize the equation \eqref{flow} by using the height function over a smooth reference surface. Note first that since  $\pa E_0$ is a compact   $C^{1,1}$-hypersurface  we  find for any $\eps>0$ a smooth compact hypersurface $\Sigma$ such that we may write $\pa E_0$ as a graph over $\Sigma$,
\[
\pa E_0 = \{ x + h_0(x) \nu (x) :  x \in \Sigma \}
\]
with  $\|h_0\|_{C^{0}(\Sigma)} <  \eps$ and $\|h_0\|_{C^{2}(\Sigma)} \leq C$.  Indeed, we may first  fix a smooth surface $\Sigma_0$ such that 
\[
\pa E_0 = \{ x + \tilde{h}(x)\nu_{\Sigma_0}(x) : x \in \Sigma_{0}\},
\] 
where $\tilde h \in C^{1,1}(\Sigma_0) $ (note that $\|\tilde h\|_{C^0(\Sigma_0)}$ is not necessarily small).  By standard mollification argument we  find $\tilde h_\eps \in C^\infty(\Sigma_0)$ with 
\[
\|\tilde h_\eps - \tilde h \|_{C^0(\Sigma_0)}  \leq \eps \quad \text{and} \quad \|\tilde h_\eps \|_{C^2(\Sigma_0)} \leq C. 
\]
Thus we may  define $\Sigma = \{ x + \tilde{h}_\eps(x)\nu_{\Sigma_0}(x) : x \in \Sigma_{0}\}$.

From now on we assume that $\alpha$ is a positive number such that $\alpha < (1-s)/2$. We note that because  $\pa E_0$ is only $C^{1,1}$-regular  we have $\|\nu_\Sigma\|_{C^1(\Sigma)} \leq C$ but $\|\nu_\Sigma\|_{C^{1+s+\alpha}(\Sigma)} \leq C\eps^{-s - \alpha}$. This means that we have to be careful in our analysis whenever we have terms which depend on the norm $\|\nu_\Sigma\|_{C^{1+s+\alpha}(\Sigma)} $, because  we cannot bound it uniformly if we want $\Sigma$ to be close to $\pa E_0$. Note that 
\[
\|h_0\|_{C^\alpha(\Sigma)}  \leq  \|h_0\|_{C^0(\Sigma)}^{1-\alpha} \|h_0\|_{C^1(\Sigma)}^\alpha  \leq  \eps^{1-\alpha}
\]
and therefore even if $\|\nu_\Sigma\|_{C^{1+s+\alpha}(\Sigma)}$ is large   it still  holds
\beq \label{obvious}
\|h_0\|_{C^\alpha(\Sigma)}  \|\nu_\Sigma\|_{C^{1+s+\alpha}(\Sigma)} \leq C \eps^{1-s - 2\alpha} 
\eeq
 for $ \alpha < (1 -s)/2 $. Therefore for any $\delta>0$ we may choose $\eps$ small such that 
\beq
\label{product 3}
\|h_0\|_{C^{1+s+\alpha}(\Sigma)} +  \|h_0\|_{C^\alpha(\Sigma)}  \|\nu_\Sigma\|_{C^{1+s+\alpha}(\Sigma)}  \leq \delta.
\eeq
In particular, this implies $ \|h_0 \, \nu_\Sigma\|_{C^{1+s+\alpha}(\Sigma)}  \leq C \delta$. 

Our goal is to write the family of sets $(E_t)_{t \in (0,T]}$,  which is a solution of \eqref{flow},  as a graph over the reference surface $\Sigma$. To be more precise, we look for    a function $h \in C(\Sigma \times [0,T]) \cap C^\infty(\Sigma \times(0,T])$ such that the family of sets $E_t$ given by 
\[
\pa E_t = \{ x + h(x,t) \nu(x) :  x \in \Sigma \} \quad \text{and} \quad h(x,0) = h_0(x) 
\]
is a solution of \eqref{flow}. In this section we provide the  calculations which show that this leads to  the  equation  
\beq \label{intro flow}
\pa_t h =   L[h]   +  P(x, h, \nabla h) -  H_\Sigma^s(x),
\eeq
where $H_\Sigma^s$ is the fractional mean curvature of the reference surface $\Sigma$ and $L[\cdot]$ is the linear operator defined in  \eqref{linear op}. 
The precise formula for the remainder term $P$ is given in  Proposition \ref{final equation}.  Our goal  in the next section  is then   to show that for  $\delta>0$ small   the function $x \mapsto P(x, u, \nabla  u)$  satisfies  
\[
\|P(\cdot, u, \nabla  u) \|_{C^{\alpha}(\Sigma)}  \leq C \delta \|u \|_{C^{1+s+\alpha}(\Sigma)},
\]
  when $\| u \|_{C^{1+s+ \alpha}(\Sigma)} \leq \delta$. This means that we may treat \eqref{intro flow} as  a small perturbation of the fractional heat equation, i.e., \eqref{parabolic eq} with $f =  0$ and $g = 0$.

In order to   calculate \eqref{intro flow} we	  define the class of sets $\mathfrak{h}_\delta(\Sigma)$ such that  $E \in \mathfrak{h}_\delta(\Sigma)$ if its boundary can be written as 
\beq  \label{def h de}
\pa E = \{x + h_E(x)\nu(x) : x \in \Sigma \} \qquad \text{and} \quad \|h_E\|_{C^{1+s+\alpha}(\Sigma)} \leq \delta.
\eeq
In particular, if  $E \in \mathfrak{h}_\delta(\Sigma)$ then its  boundary   is a compact  $C^{1+s+\alpha}$-hypersurface.

We  begin with a standard  calculation. 
\begin{lemma}
\label{calculation}
Let $E \subset \R^{n+1}$ be a smooth bounded set, let $\Phi_{\tau}$ be a family of diffeomorphisms such that $\Phi_0(x) = x$, denote the velocity field by 
$X(x) = \frac{d}{d \tau} \big|_{\tau= 0} \Phi_{\tau}(x)$ and suppose $X \in C^{1+s+\alpha}(\Sigma)$. Then it holds
\[
-\frac{d}{d \tau} \big|_{\tau= 0} H_{\Phi_{\tau}(E)}^s (\Phi_{\tau}(x)) = 2 \int_{\pa E} \frac{(X(y) - X(x)) \cdot \nu_E(y)}{|y-x|^{n+1+s}} \, d \Ha_y^{n} .
\]
\end{lemma}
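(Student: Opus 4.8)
The plan is to differentiate the explicit integral representation of $H^s_E$ under the change of variables given by $\Phi_\tau$. Writing the fractional mean curvature as a principal-value integral over $\partial E$ — recall that $H^s_E(x)$ can be rewritten, via an integration by parts / the identity relating $\int_{E^c}-\int_E$ to a boundary integral, in a form where the integrand involves only $\partial E$ — we apply $\Phi_\tau$ to both the set $E$ and the evaluation point $x$. The key structural observation is that the fractional mean curvature is, up to sign, the first variation of the fractional perimeter; concretely, one can start from the volume formulation
\[
-H^s_E(x) = \text{p.v.} \left( \int_{E}\frac{dy}{|x-y|^{n+1+s}} - \int_{E^c}\frac{dy}{|x-y|^{n+1+s}}\right),
\]
and instead of differentiating this directly I would use the cleaner route: represent $H^s_E(x)$ as a boundary integral and differentiate pointwise in $\tau$.

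First I would fix $x\in\partial E$, set $x_\tau=\Phi_\tau(x)$, and write $E_\tau=\Phi_\tau(E)$. Using the change of variables $y\mapsto\Phi_\tau(y)$ in the integrals defining $H^s_{E_\tau}(x_\tau)$, I would express everything as integrals over the fixed domains $E$, $E^c$, $\partial E$ with the kernel $|\Phi_\tau(y)-\Phi_\tau(x)|^{-(n+1+s)}$ and Jacobian factors $J\Phi_\tau$ appearing. Then I differentiate at $\tau=0$: the derivative of the Jacobian contributes a $\Div_{\R^{n+1}}X$ term, and the derivative of the kernel contributes $-(n+1+s)\frac{(X(y)-X(x))\cdot(y-x)}{|y-x|^{n+3+s}}$. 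The volume terms with the $\Div X$ factor should, after an integration by parts in $\R^{n+1}$ moving the divergence onto the kernel, recombine with the kernel-derivative terms so that only a boundary integral survives — this is precisely the mechanism by which the bulk contributions collapse to $\partial E$. Collecting terms and using that on $\partial E$ the combination reduces to the normal component $(X(y)-X(x))\cdot\nu_E(y)$, one obtains the claimed formula with the factor $2$.

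Alternatively, and perhaps more transparently, I would differentiate the well-known first-variation identity: for a smooth vector field $X$,
\[
\frac{d}{d\tau}\Big|_{\tau=0}\text{Per}_s(\Phi_\tau(E)) = \int_{\partial E} H^s_E(x)\, X(x)\cdot\nu_E(x)\, d\Ha^n(x),
\]
but since we need the pointwise (not integrated) variation of $H^s_E$, this only serves as a consistency check. The honest computation is the direct one above. Throughout, the principal-value nature of the integrals must be respected: I would work with the symmetric truncation $\{|y-x|>\rho\}$, differentiate, and then pass to the limit $\rho\to 0$, checking that the $\rho$-sphere boundary terms vanish. The regularity hypothesis $X\in C^{1+s+\alpha}(\Sigma)$ — equivalently $\partial E$ and the velocity field are regular enough — guarantees that $(X(y)-X(x))\cdot\nu_E(y) = O(|y-x|^{1+s+\alpha-?})$ near the diagonal so that the resulting boundary integral converges absolutely (the numerator vanishes to sufficient order against the $|y-x|^{-(n+1+s)}$ singularity), and also justifies differentiating under the integral sign.

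The main obstacle I expect is the careful bookkeeping of the principal-value limits: making the symmetric truncation commute with $d/d\tau$ and showing the spurious spherical-cap terms cancel, since naively the kernel $|y-x|^{-(n+1+s)}$ is not even locally integrable and the cancellation that makes $H^s_E$ well-defined must be preserved under differentiation. A secondary technical point is verifying that the bulk $\Div_{\R^{n+1}}X$ terms genuinely integrate by parts to produce exactly the boundary integrand claimed, with the correct constant $2$ and sign; this requires being careful about orientation of $\nu_E$ (outer with respect to $E$) and about the cancellation between the $\int_{E^c}$ and $\int_E$ contributions. Once these are handled the rest is routine.
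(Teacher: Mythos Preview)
Your outline is essentially the same computation the paper carries out, but the paper's execution is tidier in two respects worth noting. First, instead of truncating $\{|y-x|>\rho\}$ and chasing the spherical-cap boundary terms you flag as the main obstacle, the paper simply regularizes the kernel, replacing $|z|^{-(n+1+s)}$ by $k_\e(z)=(|z|^2+\e)^{-(n+1+s)/2}$; all integrals are then absolutely convergent, differentiation under the integral is immediate, and one passes to the limit $\e\to 0$ only at the very end. Second, rather than computing the Jacobian derivative $\Div_{\R^{n+1}}X$ and the kernel derivative separately and then recombining them via integration by parts, the paper splits the derivative into a piece $I$ coming from the motion of the domain $E_\tau$ and a piece $II$ coming from the motion of the evaluation point $\Phi_\tau(x)$, and observes that each integrand is already a total divergence: $I$ has integrand $\Div_y(k_\e(y-x)X(y))$, while $II$ uses the symmetry $D_xk_\e(x-y)=-D_yk_\e(x-y)$ to write its integrand as $-\Div_y(k_\e(y-x)X(x))$. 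One application of the divergence theorem for each gives directly the boundary integrals $\int_{\partial E}k_\e(y-x)\,X(y)\cdot\nu_E(y)$ and $-\int_{\partial E}k_\e(y-x)\,X(x)\cdot\nu_E(y)$, and the factor $2$ arises from doing the same with the $E^c$ integral. This avoids all of the bookkeeping you anticipate.
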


\begin{proof}
Let us denote $E_ {\tau} = \Phi_{\tau}(E)$ and $k_\e(z)=(|z|^{2}+\e)^{-\frac{n+s+1}{2}}$. It is enough to show that 
\[
\frac{d}{d \tau} \big|_{\tau= 0}  \int_{E_{\tau}} k_\e(\Phi_{\tau}(x)-y)dy  = \int_{\pa E} k_\e(x-y) (X(y) - X(x)) \cdot \nu_E(y)  \, d \Ha_y^{n}.
\]
Indeed, by repeating the same calculations for the second term in \eqref{def FMC} and letting $\eps \to 0$ yields  the result.  We split the above term  as 
\[
\frac{d}{d \tau} \big|_{\tau= 0}  \int_{E_{\tau}}  k_\e(\Phi_{\tau}(x)-y)dy  =  \underbrace{\frac{d}{d \tau} \big|_{\tau= 0}  \int_{E_{\tau}} k_\e(x - y) \, dy}_{= I}  +   \underbrace{\frac{d}{d \tau} \big|_{\tau= 0}  \int_{E} k_\e(\Phi_{\tau}(x) - y) \, dy}_{= II}. 
\]

Let us denote the Jacobian of $\Phi_{\tau}$ by $ J_{\Phi_{\tau}, \R^{n+1}}$. Since $ \frac{d}{d \tau} \big|_{\tau= 0} J_{\Phi_{\tau}, \R^{n+1}}=   \Div_{\R^{n+1}} X$, we may write the first term by change of variables as 
\[
\begin{split}
I &= \frac{d}{d \tau} \big|_{\tau= 0}  \int_{E} k_\e(\Phi_{\tau}(y)-x) \, J_{\Phi_{\tau}, \R^{n+1}}(y)\, dy \\
&=  \int_{E} \left(k_\e(y-x) \, (\Div_{\R^{n+1}}  X)(y) + D_y k_\e(y-x) \cdot X(y)  \right)\, dy  \\
&=  \int_{E} \Div_{\R^{n+1}} (k_\e(y-x) \, X(y) )\, dy\\
&= \int_{\pa E} k_\e(y-x) \, (X(y)\cdot \nu_E(y))\, d\Ha_y^{n}.
\end{split}
\]
By symmetry it holds $D_x k_\e(x -y) = -D_y k_\e(x -y)$. Therefore  we have for the second term 
\[
\begin{split}
II &=   \int_{E} (D_x k_\e(x - y) \cdot X(x) )  \, dy \\
&= - \int_{E}\Div_{\R^{n+1}} (k_\e(y-x) \, X(x) )\, dy\\
&= -\int_{\pa E} k_\e(y-x) \, (X(x)\cdot \nu_E(y))\,  d \Ha_y^{n}.
\end{split}
\] 

%
\end{proof}

We may  use Lemma \ref{calculation}  to write   the fractional  mean curvature  $H_E^s$ over the reference surface $\Sigma$. Let $E \in   \mathfrak{h}_\delta(\Sigma)$ with $\pa E = \{x + h(x)\nu(x) : x \in \Sigma \} $. 
We define  the sets $E_{t'}$ as $\pa E_{t'} :=  \{x +  t' h(x)\nu(x) : x \in \Sigma \}$, with $t' \in [0,1]$, and family of diffeomorphisms $\Phi_{t'h} : \Sigma \to \pa E_{t'}$ as 
\[
\Phi_{t'h}(x) = x +  t' h(x)\nu(x)  . 
\]
Then for $x \in \Sigma$ we have
\beq \label{integrate}
-H_E^s(x + h(x)\nu(x)) = -\int_0^1 \frac{d}{dt'}   H_{\Phi_{t'h}(\Sigma)}^s(\Phi_{t'h}(x)) \, dt' - H_\Sigma^s(x).
\eeq
We denote the tangential Jacobian of $\Phi_{t'h}$ by  $J_{\Phi_{t'h}}$ (see \cite{AFP} for details) and define $\Phi_\tau(x) := \Phi_{(t' +\tau)h} (\Phi_{t'h}^{-1}(x)) $. Note that  $\Phi_\tau : \pa E_{t'} \to \pa E_{t' +\tau} $ is a diffeomorphism and
\[
 \frac{d}{d \tau} \big|_{\tau= 0} \Phi_{\tau}(x) = h( \Phi_{t'h}^{-1}(x) ) \nu( \Phi_{t'h}^{-1}(x))  \qquad \text{for } \, x \in \pa E_{t'} .
\]
 We apply  Lemma \ref{calculation}   and change of variables to deduce
\[
\begin{split}
-\frac{d}{dt'}   &H_{\Phi_{t'h}(\Sigma)}^s(\Phi_{t'h}(x)) \\
 &= 2 \int_{\pa E_{t'}} \frac{1}{|y-x|^{n+1+s}} \left( h( \Phi_{t'h}^{-1}(y)) \nu( \Phi_{t'h}^{-1}(y))  -  h( \Phi_{t'h}^{-1}(x) ) \nu( \Phi_{t'h}^{-1}(x)) \right)  \cdot \nu_{E_{t'}}(y)\, d \Ha_y^{n} \\
&=  2 \int_{\Sigma}\frac{1}{|\Phi_{t'h}(x)-\Phi_{t'h}(y)|^{n+1+s}}  \left( h(y) \nu(y)  -  h(x) \nu(x) \right) \cdot \nu_{E_{t'}}(\Phi_{t'h}(y))  J_{\Phi_{t'h}}(y) \, d \Ha_y^{n}\\
&=  2 \int_{\Sigma}\frac{(h(y)- h(x))}{|\Phi_{t'h}(x)-\Phi_{t'h}(y)|^{n+1+s}} \,  (\nu(y)  \cdot \nu_{E_{t'}}(\Phi_{t'h}(y))  J_{\Phi_{t'h}}(y)) \, d \Ha_y^{n}\\
&\qquad +2\left(\int_{\Sigma}\frac{(\nu(y)- \nu(x)) \cdot   \nu_{E_{t'}}(\Phi_{t'h}(y)) }{|\Phi_{t'h}(x)-\Phi_{t'h}(y)|^{n+1+s}} \,  J_{\Phi_{t'h}}(y) \, d \Ha_y^{n}\right) \, h(x).
\end{split}
\]
  We may write the normal $ \nu_{E_{t'}}$ (see \cite[Section 1.5]{MantegazzaBook}) as
\begin{equation} \label{nu E_t}
\nu_{E_t'}(\Phi_{t'h}(y)) =  \frac{1}{J_{\Phi_{t'h}}} \big( (1  +  Q_1(y, t' h, t' \nabla h) ) \nu(y) +  Q_2(y, t' h, t' \nabla h) \big),
\end{equation}
where $Q_i$ are smooth functions which depend on the second fundamental form of $\Sigma$ and  $Q_i(y, 0 , 0) = 0$, for $i =1,2$ for all $y \in \Sigma$.  Moreover,  $Q_2$ takes values on the tangent space, i.e.,  $Q_2(y, \cdot, \cdot) \cdot \nu(y) = 0$.  We  may thus write  
\[
\begin{split}
2 \int_{\Sigma}&\frac{(h(y)- h(x))}{|\Phi_{t'h}(x)-\Phi_{t'h}(y)|^{n+1+s}} \,  (\nu(y)  \cdot \nu_{E_{t'}}(\Phi_{t'h}(y))  J_{\Phi_{t'h}}(y)) \, d \Ha_y^{n} \\
&=2 \int_{\Sigma}\frac{(h(y)- h(x))}{|\Phi_{t'h}(x)-\Phi_{t'h}(y)|^{n+1+s}} \, (1 + Q_1(y, t' h, t' \nabla h))  \, d \Ha_y^{n}\\
&= 2 \Delta^{\frac{s+1}{2}} h(x) \\
&\,\,\,\,\,\,\,+ 2 \int_{\Sigma} (h(y)- h(x))\left( \frac{1 + Q_1(y, t' h, t' \nabla h)}{|\Phi_{t'h}(x)-\Phi_{t'h}(y)|^{n+1+s}}- \frac{1}{|y-x|^{n+1+s}}\right)\, d \Ha_y^{n}.
\end{split}
\]
We write  $2 (\nu(y)- \nu(x)) \cdot \nu(y) = |\nu(y) -\nu(x)|^2$ and obtain by  \eqref{nu E_t}
\[
\begin{split}
2\int_{\Sigma}&\frac{(\nu(y)- \nu(x)) \cdot   \nu_{E_{t'}}(\Phi_{t'h}(y)) }{|\Phi_{t'h}(x)-\Phi_{t'h}(y)|^{n+1+s}} \,  J_{\Phi_{t'h}}(y) \, d \Ha_y^{n} \\
&=\int_{\Sigma}\frac{|\nu(y)- \nu(x)|^2}{|y-x|^{n+1+s}}  \, d \Ha_y^{n} \\
&\,\,\,\,\,\,\,+  \int_{\Sigma} |\nu(y)- \nu(x)|^2 \left(\frac{1 + Q_1(y, t' h, t' \nabla h)}{|\Phi_{t'h}(x)-\Phi_{t'h}(y)|^{n+1+s}} - \frac{1}{|y-x|^{n+1+s}}\right)\, d \Ha_y^{n} \\
&\,\,\,\,\,\,\,+ 2\int_{\Sigma}\frac{(\nu(y)- \nu(x)) \cdot   Q_2(y, t' h, t' \nabla h)}{|\Phi_{t'h}(x)-\Phi_{t'h}(y)|^{n+1+s}}\,  \, d \Ha_y^{n} .
\end{split}
\]

To shorten the notation we denote the kernel $K_u : \Sigma \times \Sigma \to [0,\infty]$ generated by $u \in C^{1+s+\alpha}(\Sigma)$  as
\beq \label{kernel 1}
K_u(y,x) := \frac{1}{| y -x + u(y) \nu(y)  - u(x)\nu(x)|^{n+1+s}} .
\eeq
Recall that $Q_1(y,0,0) = 0$. We may thus write 
\[
\frac{1 + Q_1(y, t' h, t' \nabla  h)}{|\Phi_{t'h}(x)-\Phi_{t'h}(y)|^{n+1+s}} - \frac{1}{|y-x|^{n+1+s}} = \int_0^{t'} \frac{d}{d \xi} \big( (1 + Q_1(y, \xi h, \xi \nabla  h))K_{\xi h}(y,x)\big)d \xi.
\]
We may finally   write the fractional mean curvature of $E \in \mathfrak{h}_\delta(\Sigma)$  by recalling the linear operator $L[\cdot]$ in  \eqref{linear op}, by \eqref{integrate} and by  the previous calculations 
\beq \label{fracMC lin}
-H_E^s(x + h(x)\nu(x))   = L[h](x)  - H_\Sigma^s(x) + R_{1,h}(x)+ R_{2,h}(x) \, h(x) .
\eeq
The remainder terms  $R_{1,u}$ and $R_{2,u}$ are defined for a generic function $u \in C^{1+s+\alpha}(\Sigma)$ with $\| u \|_{C^{1+s+\alpha}(\Sigma)} \leq \delta$ as
\beq \label{R_1}
R_{1,u}(x): =2 \int_0^{1} \int_0^{t'} \int_{\Sigma} (u(y)- u(x)) \frac{d}{d \xi} \big( (1 + Q_1(y, \xi u, \xi \nabla u))K_{\xi u}(y,x)\big)\, d \Ha_y^{n} d \xi dt'
\eeq
and 
\beq \label{R_2}
\begin{split}
R_{2,u}(x) :=  &\int_0^1 \int_0^{t'} \int_{\Sigma} |\nu(y)- \nu(x)|^2 \frac{d}{d \xi} \big( (1 + Q_1(y, \xi u, \xi \nabla u))K_{\xi u}(y,x) \big)\, d \Ha_y^{n} d \xi dt' \\
&+ 2 \int_0^1\int_{\Sigma}(\nu(y)- \nu(x)) \cdot   Q_2(y, t' u, t' \nabla u)\, K_{t'u} (y,x) \, d \Ha_y^{n} dt',
\end{split}
\eeq
where the kernel $K_u$  is  defined in \eqref{kernel 1}, and $Q_1, Q_2$ are smooth functions which satisfy $Q_1(y,0,0)= Q_2(y,0,0)  = 0$ for all $y \in \Sigma$. 


In order to write the flow \eqref{flow} as an equation we  recall from \cite{MantegazzaBook} that  the normal velocity of the flow $(E_t)_t$ given by  $E_t= \Phi_t(E)$, where $\Phi_t(x) = x + h(x,t)\nu(x) $ on $\Sigma$, is 
\[
V_t = \big( \nu_{E_t}(\Phi_{t}(x)) \cdot \nu(x)  \big) \partial_t h .
\] 
By choosing in $t'=1$  in \eqref{nu E_t} we have 
\[
\nu_{E_t}(\Phi_t(x)) \cdot \nu(x) = \frac{1}{J_{\Phi_{t}}} (1 + Q_1(y, h, \nabla h))  
\]
and the Jacobian can be written as $J_{\Phi_{t}} = 1 + Q_3(y, h, \nabla h)$. Here   $Q_1$ and $Q_3$ are smooth functions with  $Q_1(x,0,0) = Q_3(x,0,0)= 0$ for all $x \in \Sigma$.  Thus when $E_t \in \mathfrak{h}_\delta(\Sigma)$ for small enough $\delta$ we may write
\beq \label{normals}
V_t =\big(  \nu_{E_t}(\Phi_t(x)) \cdot \nu(x) \big) \partial_t h =  \frac{\partial_t h}{1 + Q(y, h(\cdot,t), \nabla h(\cdot, t))}  
\eeq
where $Q$ is a smooth function with $Q(x,0,0) =0$ for all $x \in \Sigma$.  We may finally write the equation for $h$ by combing \eqref{fracMC lin} and \eqref{normals}.  We state this in the following  proposition. 

\begin{proposition}
\label{final equation}
 Assume that the flow $(E_t)_{t \in (0,T]}$, with  $E_t \in \mathfrak{h}_\delta(\Sigma)$ for $t \in (0,T]$, is a classical solution of  \eqref{flow} starting from $E_0$ with $\pa E_0 =  \{ x + h_0(x)\nu(x) : x \in \Sigma\}$ and assume $\delta$ is small. Then the function $h \in C(\Sigma \times [0,T]) \cap C^\infty(\Sigma \times (0,T])$ with $\pa E_t = \{ x + h(x,t)\nu(x) : x \in \Sigma\}$ is a solution of the equation 
\beq \label{the equation}
\begin{split}
\partial_t h  &=  (1 + Q(x, h, \nabla h)) \Big( L[h]  - H_\Sigma^s(x) + R_{1,h(\cdot,t)}(x)+ R_{2,h(\cdot,t)}(x) \, h(x,t) \Big) \qquad \text{on } \, \Sigma \times (0,T]
\end{split}
\eeq
with $h(x,0) = h_0$. Here $L$ is the linear operator defined in  \eqref{linear op} and $H_\Sigma^s$ is the fractional mean curvature of the reference surface $\Sigma$. The remainder terms $R_{1,h(\cdot,t)}$ and $R_{2,h(\cdot,t)}$ are defined in 
\eqref{R_1} and \eqref{R_2} respectively and $Q$ is a smooth function with  $Q(x,0,0) = 0$  for all $x \in \Sigma$.

Conversely, if $h \in  C(\Sigma \times [0,T]) \cap C^\infty(\Sigma \times (0,T])$ is a  solution of \eqref{the equation} with 
\[
h(x,0) = h_0 \quad \text{and} \quad \sup_{0<t<T}\|h(\cdot,t)\|_{C^{1+s+\alpha}(\Sigma)} \leq \delta,
\]
 then $\pa E_t = \{ x + h(x,t)\nu(x) : x \in \Sigma\}$ defines a  family of sets which is a solution of \eqref{flow} starting from $E_0$.  
\end{proposition}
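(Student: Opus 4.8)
The plan is to read Proposition~\ref{final equation} as a dictionary between the geometric flow \eqref{flow} and the scalar equation \eqref{the equation}: the two genuinely computational steps --- the pointwise formula \eqref{fracMC lin} for $-H_E^s$ written over $\Sigma$, and the formula \eqref{normals} for the normal velocity --- are already in hand, so the remaining task is to combine them with the defining relation $V_t=-H_{E_t}^s$ and to keep track of the regularity transfer between the ambient map $\Psi$ and the height function $h$. I would prove the two implications separately, having first fixed once and for all a $\delta>0$, small depending only on the $C^{1,1}$-geometry of $\Sigma$, so that every ingredient below is simultaneously valid: there is an embedded collar $\mathcal{N}=\{x+r\nu(x):x\in\Sigma,\ |r|<\rho\}$ with well defined nearest-point projection $\pi$ and signed distance $d$, and whenever $\|h(\cdot,t)\|_{C^{1+s+\alpha}(\Sigma)}\le\delta$ the map $\Phi_t(x)=x+h(x,t)\nu(x)$ is an embedding of $\Sigma$ with $J_{\Phi_t}=1+Q_3(\cdot,h,\nabla h)>0$ and $\Phi_t(\Sigma)\subset\mathcal{N}$, while the factor $1+Q(\cdot,h,\nabla h)$ in \eqref{normals} is positive.

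\textbf{Forward implication.} Given a classical solution $(E_t)_{t\in(0,T]}$ of \eqref{flow} with $E_t\in\mathfrak{h}_\delta(\Sigma)$, write $\partial E_t=\Phi_t(\Sigma)$ and let $h(\cdot,t)$ be the corresponding height function. Setting $\Theta_t(z):=\pi(\Psi(z,t))$, the point $\Psi(z,t)\in\partial E_t$ satisfies $\Psi(z,t)=\Theta_t(z)+d(\Psi(z,t))\,\nu(\Theta_t(z))$, hence $h(x,t)=d\big(\Psi(\Theta_t^{-1}(x),t)\big)$; since $\Theta_t$ is a diffeomorphism of $\Sigma$ (the composition of $\Psi(\cdot,t)|_\Sigma$ with the projection $\pi:\partial E_t\to\Sigma$) and $d,\nu$ are smooth, $h$ inherits the regularity of $\Psi$, namely $h\in C(\Sigma\times[0,T])\cap C^\infty(\Sigma\times(0,T])$, and $h(\cdot,0)=h_0$ because $\partial E_0=\{x+h_0(x)\nu(x)\}$. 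Differentiating $\Phi_t$ in time, the velocity of $\Phi_t(x)$ is $\partial_t h(x,t)\,\nu(x)$, so $V_t=(\nu_{E_t}(\Phi_t(x))\cdot\nu(x))\,\partial_t h$, which by \eqref{nu E_t} with $t'=1$ equals $\partial_t h/(1+Q(x,h,\nabla h))$ as in \eqref{normals}. Applying \eqref{fracMC lin} to $E=E_t$ with height $h(\cdot,t)$ gives $-H_{E_t}^s(\Phi_t(x))=L[h(\cdot,t)](x)-H_\Sigma^s(x)+R_{1,h(\cdot,t)}(x)+R_{2,h(\cdot,t)}(x)\,h(x,t)$. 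Imposing $V_t=-H_{E_t}^s$ along $\partial E_t$ and multiplying by $1+Q>0$ is exactly \eqref{the equation}.

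\textbf{Converse implication.} Let $h$ solve \eqref{the equation} with $h(\cdot,0)=h_0$ and $\sup_{0<t<T}\|h(\cdot,t)\|_{C^{1+s+\alpha}(\Sigma)}\le\delta$. For each $t$ the map $\Phi_t$ is, by the choice of $\delta$, a $C^{1+s+\alpha}$-embedding (smooth for $t>0$), so $\Phi_t(\Sigma)$ is a closed hypersurface bounding a bounded open set $E_t\in\mathfrak{h}_\delta(\Sigma)$. To realise the diffeomorphisms in the definition of a classical solution, fix $\chi\in C^\infty(\R)$ with $\chi\equiv1$ near $0$ and $\operatorname{supp}\chi\subset(-\rho/2,\rho/2)$ and set $\Psi(x+r\nu(x),t):=x+(r+\chi(r)h(x,t))\nu(x)$ on $\mathcal{N}$ and $\Psi(p,t):=p$ off $\mathcal{N}$; for $\delta$ small this is a diffeomorphism of $\R^{n+1}$ equal to the identity outside a compact set, of class $C^{1+s+\alpha}$ (hence $C^{1+s}$) in space for every $t$ and smooth for $t>0$, jointly continuous on $\R^{n+1}\times[0,T]$ and smooth on $\R^{n+1}\times(0,T]$, since $h(\cdot,t)\in C^{1+s+\alpha}(\Sigma)$ and $\pi,d,\nu$ are smooth on $\mathcal{N}$. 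As $\Psi(\cdot,t)$ is the identity outside a compact set and carries $\Sigma=\partial G$ onto $\Phi_t(\Sigma)=\partial E_t$, it maps the bounded part $G$ of $\R^{n+1}\setminus\Sigma$ onto the bounded part $E_t$ of $\R^{n+1}\setminus\partial E_t$, so $E_t=\Psi(G,t)$ and $E_0=\Psi(G,0)$. Finally $V_t=\partial_t h/(1+Q)$ by \eqref{normals} and $-H_{E_t}^s(\Phi_t(x))=L[h]-H_\Sigma^s+R_{1,h}+R_{2,h}h$ by \eqref{fracMC lin}, while \eqref{the equation} divided by $1+Q$ states precisely that these two coincide; hence $(E_t)$ solves \eqref{flow}.

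\textbf{Main obstacle.} No step is deep; the real point is one of \emph{uniformity} --- exhibiting a single $\delta$, depending only on $\Sigma$, that simultaneously makes $\Phi_t$ an embedding into the collar, keeps $1+Q$ and $1+Q_3$ positive, and renders the representations \eqref{nu E_t} and \eqref{fracMC lin} valid --- together with the (routine) verification that $h$ and $\Psi$ carry the claimed joint space--time regularity, which follows once one writes $h$ through the smooth nearest-point projection on $\Sigma$ and uses that on the compact surface $\Sigma$ the $C^{1+s+\alpha}$-norm controls the $C^1$-norm.
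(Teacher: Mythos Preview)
Your proposal is correct and follows essentially the same approach as the paper: the proposition is a direct combination of the two computational identities \eqref{fracMC lin} and \eqref{normals}, and the paper in fact offers no separate proof beyond the derivation of those formulas in the preceding paragraphs. You supply more detail than the paper does---the explicit recovery of $h$ from $\Psi$ via the nearest-point projection, the explicit extension of $\Phi_t$ to an ambient diffeomorphism $\Psi$ via a radial cutoff, and the check that a single $\delta$ governs all the smallness requirements---all of which the paper leaves implicit.
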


\section{Regularity estimates for the nonlocal operators}

In this section we study the spatial  regularity issues related to the equation \eqref{the equation} and, in particular,   the remainder  terms $R_{1,u}$ and $R_{2,u}$ defined  in \eqref{R_1} and \eqref{R_2}. As we mentioned in the previous section,  our goal is to prove that if  $\|u\|_{C^{1+s+\alpha}}$ is small  then  $R_{1,u}$ and $R_{2,u}$  are small in the $C^\alpha$-sense, which  then implies that we may regard the equation \eqref{the equation} as a linear equation with a small perturbation. We  study also the higher order regularity of   $R_{1,u}$ and $R_{2,u}$ in order to prove  that the solution of \eqref{the equation}  becomes  instantaneously  smooth.  The complicated structure of  $R_{1,u}$ and $R_{2,u}$  makes this section  challenging.

Throughout this section  $K$ denotes  a generic kernel, if not otherwise mentioned,  while $K_u$ is  the kernel defined in \eqref{kernel 1}. Next we define the class of  kernels which we will use throughout the section. 
\begin{definition} 
\label{S kappa}
Let $\kappa >0$ and $K : \Sigma \times \Sigma \to \R \cup \{\pm \infty \}$. We say that $K \in \mathcal{S}_\kappa$ if the following three conditions hold:
\begin{itemize} 
\item[(i)]   $K$ is continuous at every  $y,x \in \Sigma$, $x \neq y$, and  it holds 
\[
|K(y,x)| \leq \frac{\kappa}{| y -x|^{n+1+s}}.
\] 
\item[(ii)]  The function $x \mapsto K(y,x)$ is differentiable at every $x,y \in \Sigma$, $x \neq y$, and 
\[
| \nabla_x K(y,x)  | \leq    \frac{\kappa}{ |y-x|^{n+2+s}}.
\] 
\item[(iii)] The function 
\[
\psi(x) := \int_{\Sigma} (y-x)\, K(y,x) \, d\Ha_y^{n}
\]
is H\"older continuous with $ \|\psi\|_{C^{\alpha}(\Sigma)} \leq \kappa$. 
\end{itemize} 
\end{definition}

\begin{remark} 
 Throughout the paper we assume that $\Sigma$ is a compact hypersurface, but Definition \ref{S kappa} can be extended to  the case $\Sigma = \R^n$.  For instance   the autonomous kernel  $|y-x|^{-n-1-s}$ in $\R^n$ trivially satisfies the conditions (i)-(iii) for  $\kappa =  n+1+s$. 
\end{remark}

The first two conditions in Definition \ref{S kappa} state that the kernel $K$ behaves similarly as the model case $|y-x|^{-n-1-s}$, while the third condition is somewhat more involved. Indeed, it is not trivial to prove the condition (iii)  for $K_u$ defined in \eqref{kernel 1}, since $\Sigma$ is not flat and  thus there is no  cancellation due to symmetry  as in the case $\Sigma = \R^{n}$. (We will prove this in Lemma \ref{kernel lemma}.) However, it is important first to notice that there are cases when   we do not need the condition (iii) to prove H\"older continuity estimates. This is stated in the following useful auxiliary lemma.

\begin{lemma}
\label{lemma aux 2}
Assume that $K : \Sigma \times \Sigma \to \R \cup \{\pm \infty \}$ satisfies the conditions (i) and (ii) in Definition \ref{S kappa} with constant $\kappa>0$. Moreover, assume that  $F \in C(\Sigma \times \Sigma)$ satisfies the following:
\begin{itemize}
\item[(1)] For all $x,y \in \Sigma$ it holds
\[
|F(y,x)| \leq \kappa_0 |y-x|^{1+s+\alpha} .
\] 
\item[(2)] For all $x,y,z \in \Sigma$ with $|y-x| \geq 2 |z-x|$ it holds
\[
|F(y,z) - F(y,x)| \leq \kappa_0 |z-x|^{s+\alpha} \, |y-x| .
\]
\end{itemize}
Then the function
\[
\psi(x) = \int_\Sigma F(y,x) K(y,x) \, d \Ha_y^n
\]
is H\"older continuous and  $\|\psi\|_{C^{\alpha}(\Sigma)} \leq C \kappa_0 \kappa$.
\end{lemma}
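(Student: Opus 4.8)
The plan is to split the defining integral at the scale $|z-x|$, exactly as in the classical argument that a singular integral with the above symbol bounds produces a Hölder-continuous function. Fix $x,z \in \Sigma$ and set $r = |z-x|$. We need to bound $|\psi(z)-\psi(x)|$ by $C\kappa_0\kappa\, r^\alpha$; since the $C^0$-bound $\|\psi\|_{C^0(\Sigma)}\le C\kappa_0\kappa$ is immediate from (i) together with hypothesis (1) (the integrand is bounded by $\kappa_0\kappa |y-x|^{s+\alpha-1}$, which is integrable over the compact $\Sigma$), the whole point is the Hölder seminorm. Write
\[
\psi(z)-\psi(x) = \int_{\Sigma \cap B_{2r}(x)} \big(F(y,z)K(y,z) - F(y,x)K(y,x)\big)\, d\Ha^n_y + \int_{\Sigma \setminus B_{2r}(x)} \big(\cdots\big)\, d\Ha^n_y =: A + B.
\]

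For the near piece $A$: bound each of the two terms separately using (i) and (1). The term with argument $x$ contributes $\int_{\Sigma\cap B_{2r}(x)} \kappa_0 |y-x|^{1+s+\alpha}\cdot \kappa |y-x|^{-n-1-s}\, d\Ha^n_y \le C\kappa_0\kappa \int_0^{2r} \rho^{\alpha-1}\,d\rho = C\kappa_0\kappa\, r^\alpha$. For the term with argument $z$, note that on $\Sigma\cap B_{2r}(x)$ one has $|y-z|\le 3r$, so the same computation with $K(y,z)$ bounded by $\kappa |y-z|^{-n-1-s}$ and $|F(y,z)|\le \kappa_0|y-z|^{1+s+\alpha}$ gives $C\kappa_0\kappa\,r^\alpha$ again. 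So $|A|\le C\kappa_0\kappa\, r^\alpha$.

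For the far piece $B$ (where $|y-x|\ge 2r$, so hypothesis (2) applies), split
\[
F(y,z)K(y,z) - F(y,x)K(y,x) = \big(F(y,z)-F(y,x)\big)K(y,z) + F(y,x)\big(K(y,z)-K(y,x)\big).
\]
The first summand is controlled by (2) and (i): using $|y-z|\ge |y-x| - r \ge \tfrac12|y-x|$, it is at most $C\kappa_0 r^{s+\alpha}|y-x|\cdot \kappa |y-x|^{-n-1-s} = C\kappa_0\kappa\, r^{s+\alpha}|y-x|^{-n-s}$, and integrating over $\Sigma\setminus B_{2r}(x)$ gives $C\kappa_0\kappa\, r^{s+\alpha}\int_{2r}^{\mathrm{diam}\,\Sigma}\rho^{-1-s}\,d\rho \le C\kappa_0\kappa\, r^\alpha$. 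The second summand is controlled by (1) and (ii): writing $K(y,z)-K(y,x) = \int_0^1 \nabla_x K(y,\gamma(t))\cdot\dot\gamma(t)\,dt$ along a path $\gamma$ from $x$ to $z$ in $\Sigma$ of length $\le Cr$ staying within $B_{Cr}(x)$ (possible since $\Sigma$ is a smooth, hence locally bi-Lipschitz-to-flat, compact hypersurface) and using that along this path $|y-\gamma(t)|\ge c|y-x|$, this is at most $C\kappa_0 |y-x|^{1+s+\alpha}\cdot \kappa r |y-x|^{-n-2-s} = C\kappa_0\kappa\, r|y-x|^{\alpha-n-1}$; integrating over $\Sigma\setminus B_{2r}(x)$ yields $C\kappa_0\kappa\, r\cdot r^{\alpha-1} = C\kappa_0\kappa\, r^\alpha$. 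Combining, $|B|\le C\kappa_0\kappa\, r^\alpha$, and adding to $A$ finishes the estimate.

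The main obstacle is the geometric bookkeeping in the far piece: one must choose the connecting path $\gamma$ from $x$ to $z$ on $\Sigma$ so that it remains comparable to $B_r(x)$ and so that $|y-\gamma(t)|$ stays comparable to $|y-x|$ uniformly for $y$ outside $B_{2r}(x)$, and one must handle the interplay of the intrinsic distance on $\Sigma$ with the ambient Euclidean distance appearing in the kernel bounds; since $\Sigma$ is a fixed smooth compact hypersurface these are comparable with uniform constants, but the constant then depends on $\Sigma$ — which is acceptable here because, unlike the terms flagged in the introduction, none of these bounds involve $\|\nu_\Sigma\|_{C^{1+s+\alpha}}$, only the fixed geometry. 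A secondary point of care is that all the radial integrals $\int_0^{2r}\rho^{\alpha-1}d\rho$ and $\int_{2r}^\infty \rho^{-1-s}d\rho$ converge precisely because $\alpha>0$ and $s>0$, so the hypothesis $\alpha\in(0,1-s)$ is used only mildly (mainly $\alpha>0$), and the argument does not in fact need condition (iii) of Definition \ref{S kappa} at all — which is exactly the content the lemma is advertising.
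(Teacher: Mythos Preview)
Your proof is correct and follows essentially the same route as the paper: the same near/far split at scale $r=|z-x|$, the same treatment of the near piece by bounding each term separately via (i) and (1), and the same decomposition $F(y,z)K(y,z)-F(y,x)K(y,x) = (F(y,z)-F(y,x))K(y,z) + F(y,x)(K(y,z)-K(y,x))$ on the far piece. The only cosmetic differences are that the paper simply asserts $|K(y,z)-K(y,x)|\le C\kappa\,r\,|y-x|^{-n-2-s}$ as a consequence of (ii) (whereas you spell out the path-integral justification), and the paper trades one power of $r$ for a power of $|y-x|$ before integrating the second far term, arriving at the same $r^\alpha$.
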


\begin{proof} 
By the condition  (i) in Definition \ref{S kappa} and by the assumption (1) we immediately obtain $\|\psi\|_{C^{0}(\Sigma)} \leq C \kappa_0 \kappa$, because  the function $y \mapsto F(y, x) K(y, x)$  is  integrable over $\Sigma$ for every $x$. To show the H\"older continuity we may assume that $0 \in \Sigma$ and we need to show  $|\psi(z) - \psi(0)| \leq C \kappa_0 \kappa |z|^\alpha$ for $z \in \Sigma$ close to $0$. We divide the set $\Sigma$ into $\Sigma_- = \Sigma \cap \{|y| \leq 2 |z| \}$ and    $\Sigma_+ = \Sigma \cap \{|y| > 2 |z| \}$. For all $y \in \Sigma_-$  it holds by  the condition (i) in Definition \ref{S kappa} and by the assumption (1)  that 
\[
 \int_{\Sigma_-} |F(y,z)|| K(y,z)| \, d \Ha_y^n \leq  \kappa_0 \kappa \int_{\Sigma_-} \frac{1}{|y-z|^{n-\alpha}} \, d \Ha_y^n \leq C  \kappa_0 \kappa  \int_0^{3|z|} \rho^{\alpha-1}\, d \rho  \leq C  \kappa_0 \kappa |z|^{\alpha}. 
\]
Similarly it holds
\[
 \int_{\Sigma_-} |F(y,0)|| K(y,0)| \, d \Ha_y^n \leq C  \kappa_0 \kappa  |z|^{\alpha}. 
\]

On the other hand  it follows from the condition  (ii) in  Definition \ref{S kappa} that for all $y \in \Sigma_+$, i.e., $|y| \geq 2 |z|$, it holds
\[
|K(y,z) - K(y,0)| \leq C \kappa \frac{|z|}{|y|^{n+2+s}} \leq C \kappa \frac{|z|^{s+\alpha}}{|y|^{n+1+2s+\alpha}}. 
\]
This together with the condition (i) and with the assumptions (1) and (2) yield
\[
\begin{split}
 \int_{\Sigma_+} |F&(y,z) K(y,z)  - F(y,0) K(y,0)| \, d \Ha_y^n \\
&\leq  \int_{\Sigma_+} |F(y,z) - F(y,0)| |K(y,z)|  + |F(y,0)||K(y,z) - K(y,0)| \, d \Ha_y^n \\
&\leq C  \kappa_0 \kappa |z|^{s+\alpha} \int_{\Sigma_-} \frac{1}{|y|^{n+s}} \, d \Ha_y^n \\
&\leq  C  \kappa_0 \kappa |z|^{s+\alpha} \int_{2|z|}^{\infty} \rho^{-1-s}\, d \rho \leq C  \kappa_1 \kappa_2 |z|^{\alpha} .
\end{split}
\] 
 These imply $|\psi(z) -\psi(0)| \leq C  \kappa_1 \kappa_2 |z|^{\alpha}$. 
\end{proof}

We proceed by stating  first  the crucial regularity estimates we need  repeatedly  in the paper,   and  then prove that $K_u \in \mathcal{S}_\kappa$   (see Definition \ref{S kappa}) for bounded $\kappa$.  
\begin{lemma}
\label{lemma aux}
Let $K \in \mathcal{S}_\kappa$  (see Definition \ref{S kappa}) and assume $v_1 \in C^{1+s + \alpha}(\Sigma)$, $v_2 \in C^{s+\alpha}(\Sigma)$ and $v_3 \in C^{\alpha}(\Sigma)$.   Then the  function
\[
\psi(x) = \int_\Sigma (v_1(y) -v_1(x)) v_2(y)v_3(x) \, K(y,x)  \, d \Ha_y^{n}
\]
is H\"older continuous and
\[
\|\psi\|_{C^{\alpha}(\Sigma)} \leq  C\kappa\|v_1\|_{C^{1+s+\alpha}(\Sigma)}\|v_2\|_{C^{s+\alpha}(\Sigma)}\|v_3\|_{C^{\alpha}(\Sigma)}.
\]
\end{lemma}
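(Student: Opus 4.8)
The plan is to deduce the estimate from Lemma \ref{lemma aux 2} by stripping off the first-order Taylor expansion of $v_1$. Since $v_3(x)$ factors out of the integral and $\|v_3\,\phi\|_{C^\alpha(\Sigma)} \leq \|v_3\|_{C^\alpha(\Sigma)}\|\phi\|_{C^\alpha(\Sigma)}$, it is enough to prove
\[
\Big\| \int_\Sigma (v_1(y)-v_1(x))\, v_2(y)\, K(y,x)\, d\Ha_y^n \Big\|_{C^\alpha(\Sigma)} \leq C\kappa \, \|v_1\|_{C^{1+s+\alpha}(\Sigma)}\,\|v_2\|_{C^{s+\alpha}(\Sigma)}.
\]
Here the integral is a principal value: the integrand is only of order $|y-x|^{-n-s}$, hence not absolutely integrable, which is precisely why it cannot be estimated naively.

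First I would extend $v_1$ to $\R^{n+1}$ with comparable $C^{1+s+\alpha}$-norm and Taylor expand at $x$. Using that $\Sigma$ is $C^{1,1}$, so that $|(y-x)\cdot\nu(x)| \leq C|y-x|^2$, together with $s+\alpha<1$, one writes $v_1(y)-v_1(x) = D_\tau v_1(x)\cdot(y-x) + R(y,x)$, where $|R(y,x)| \leq C\|v_1\|_{C^{1+s+\alpha}(\Sigma)}|y-x|^{1+s+\alpha}$ and, whenever $|y-x|\geq 2|z-x|$, also $|R(y,z)-R(y,x)| \leq C\|v_1\|_{C^{1+s+\alpha}(\Sigma)}|z-x|^{s+\alpha}|y-x|$; this last bound follows from the identity $R(y,z)-R(y,x) = R(x,z) + (D_\tau v_1(x)-D_\tau v_1(z))\cdot(y-x)$ (with $R(x,z):= v_1(x)-v_1(z)-D_\tau v_1(z)\cdot(x-z)$) and estimating the two terms. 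Consequently $F(y,x):=R(y,x)\,v_2(y)$ satisfies the hypotheses of Lemma \ref{lemma aux 2} with $\kappa_0 = C\|v_1\|_{C^{1+s+\alpha}(\Sigma)}\|v_2\|_{C^0(\Sigma)}$, and that lemma bounds the $R$-contribution. The remaining term is $D_\tau v_1(x)\cdot\Phi(x)$ with $\Phi(x):=\int_\Sigma (y-x)\,v_2(y)\,K(y,x)\, d\Ha_y^n$, and since $\|D_\tau v_1\|_{C^\alpha(\Sigma)} \leq C\|v_1\|_{C^{1+s+\alpha}(\Sigma)}$, it suffices to show $\|\Phi\|_{C^\alpha(\Sigma)} \leq C\kappa\,\|v_2\|_{C^{s+\alpha}(\Sigma)}$.

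For $\Phi$ I would peel off the value $v_2(x)$ from $v_2(y)$. The part carrying $v_2(x)$ equals $v_2(x)\int_\Sigma (y-x)\,K(y,x)\, d\Ha_y^n$, whose $C^\alpha$-norm is controlled by condition (iii) of Definition \ref{S kappa} (the only place where (iii) is needed) and the product rule for H\"older norms; this is the only genuinely singular contribution, all remaining integrals being absolutely convergent with integrands of order $|y-x|^{\alpha-n}$. The part carrying $v_2(y)-v_2(x)$ has $F(y,x):=(y-x)(v_2(y)-v_2(x))$, which satisfies the hypotheses of Lemma \ref{lemma aux 2} componentwise with $\kappa_0 = C\|v_2\|_{C^{s+\alpha}(\Sigma)}$, so that lemma applies once more. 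Assembling these pieces gives the bound on $\Phi$, hence the lemma. I expect the main obstacle to be the bookkeeping of this two-step peeling, in particular the verification of the second hypothesis of Lemma \ref{lemma aux 2} for the two choices of $F$, where one must exploit $|y-x|\geq 2|z-x|$ together with $s+\alpha<1$ to trade powers of $|x-z|$ for powers of $|y-x|$.
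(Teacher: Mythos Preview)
Your proposal is correct and follows essentially the same approach as the paper: strip off $v_3$, Taylor-expand $v_1$ to first order, peel $v_2(y)$ into $v_2(x)+(v_2(y)-v_2(x))$, invoke Lemma~\ref{lemma aux 2} for the absolutely convergent pieces, and use condition~(iii) of Definition~\ref{S kappa} for the genuinely singular linear term. The only cosmetic difference is the order of the two peelings: the paper first splits $v_2$ (obtaining $\psi_1$ with integrand $(v_1(y)-v_1(x))(v_2(y)-v_2(x))$ and $\psi_2$ with integrand $(v_1(y)-v_1(x))$) and then Taylor-expands $v_1$ only inside $\psi_2$, whereas you Taylor-expand $v_1$ first and split $v_2$ afterwards inside $\Phi$; the verifications of the hypotheses of Lemma~\ref{lemma aux 2}---including your identity $R(y,z)-R(y,x)=R(x,z)+(D_\tau v_1(x)-D_\tau v_1(z))\cdot(y-x)$---coincide with those in the paper.
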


\begin{proof} 
 We write  $\psi$ as
\[
\psi(x) =v_3(x)  \underbrace{\int_\Sigma (v_1(y) -v_1(x))(v_2(y) -v_2(x)) \, K(y,x)  \, d \Ha_y^{n}}_{=: \psi_1(x)} +  v_2(x)v_3(x) \underbrace{\int_\Sigma (v_1(y) -v_1(x)) \, K(y,x)  \, d \Ha_y^{n}}_{=:\psi_2(x)}.
\]
Note that  $\| \psi\|_{C^\alpha(\Sigma)} \leq \|v_3\|_{C^\alpha(\Sigma)}  \|\psi_1\|_{C^\alpha(\Sigma)}  + \|v_3\|_{C^\alpha(\Sigma)} \|v_2\|_{C^\alpha(\Sigma)} \|\psi_2\|_{C^\alpha(\Sigma)} $. Therefore it is enough to 
 estimate  $\|\psi_1\|_{C^\alpha(\Sigma)}$ and $\|\psi_2\|_{C^\alpha(\Sigma)}$. We define 
\[
F_1(y,x) := (v_1(y) -v_1(x))(v_2(y) -v_2(x)) .
\]
It is straightforward to check that  $F_1$ satisfies the assumptions of Lemma  \ref{lemma aux 2} with  $\kappa_0 \leq C  \, \|v_1\|_{C^{1}(\Sigma)}  \|v_2\|_{C^{s+\alpha}(\Sigma)}$.  
Therefore  Lemma \ref{lemma aux 2}  yields  $\|\psi_1\|_{C^{\alpha}(\Sigma)} \leq C \kappa \, \|v_1\|_{C^{1}(\Sigma)}  \|v_2\|_{C^{s+\alpha}(\Sigma)}$. We need thus to show that 
\beq 
\|\psi_2\|_{C^{\alpha}} \leq C\kappa  \|v_1\|_{C^{1+s+\alpha}(\Sigma)} .  
\eeq

To this aim we write $\psi_2$ as
\[
\psi_2(x) = \underbrace{\int_\Sigma \big(v_1(y) -v_1(x) - D_\tau v_1(x) \cdot (y -x)\big)\, K(y,x)  \, d \Ha_y^{n}}_{=: \psi_3(x)} + D_\tau v_1(x) \cdot \int_\Sigma  (y -x)\, K(y,x)  \, d \Ha_y^{n} .
\]
It follows immediately from the condition (iii) in Definition  \ref{S kappa} that the second term on the right-hand-side is H\"older-continuous with $C^\alpha$-norm bounded by $C \kappa \|v_1\|_{C^{1+\alpha}(\Sigma)}$. We need thus  to prove the  H\"older-continuity of $\psi_3$. We notice that  for every $x,y \in \Sigma$ it holds
\[
|v_1(y) -v_1(x) - D_\tau v_1(x) \cdot (y -x)| \leq  \|v_1\|_{C^{1+s+\alpha}}|y-x|^{1+s+\alpha}. 
\] 
Therefore  the function  
\beq
\label{aux F_3}
F_2(y,x) := v_1(y) -v_1(x) - D_\tau v_1(x) \cdot (y -x)  
\eeq
satisfies the assumption (1) of Lemma  \ref{lemma aux 2} with $\kappa_0 \leq    \|v_1\|_{C^{1+s+\alpha}(\Sigma)}$. Moreover for every $x,y,z \in \Sigma$ with $|y-x| \geq 2|x-z|$ it holds
\[
\begin{split}
\big| F_2(y,z) - F_2(y,x) \big|  &= \big| \big( v_1(y) - v_1(z)  -  D_\tau v_1(z) \cdot (y-z) \big)-  \big(v_1(y) - v_1(x) - D_\tau v_1(x) \cdot (y-x) \big) \big| \\
&= \big| \big( v_1(x) - v_1(z)  -  D_\tau v_1(z) \cdot (x-z) \big) + (D_\tau v_1(x) - D_\tau v_1(z)) \cdot (y-x)  \big| \\
 &\leq   \|v_1\|_{C^{1+s+\alpha}}  |z-x|^{1+s+\alpha} + \|v_1\|_{C^{1+s+\alpha}}  |z-x|^{s+\alpha} |y-x| \\
 &\leq  2 \|v_1\|_{C^{1+s+\alpha}}  |z-x|^{s+\alpha}|y-x|.
\end{split}
\]
Therefore  $F_2$ satisfies  the assumption (2) of Lemma  \ref{lemma aux 2} with $\kappa_0 \leq  2 \|v_1\|_{C^{1+s+\alpha}(\Sigma)}$, and we conclude by Lemma  \ref{lemma aux 2}  that 
\[
\|\psi_3\|_{C^{\alpha}} \leq C\kappa  \|v_1\|_{C^{1+s+\alpha}(\Sigma)} .
\]
\end{proof}

Let us now prove that the kernel $K_u$ defined in \eqref{kernel 1} belongs to the class $\mathcal{S}_\kappa$(see Definition \ref{S kappa}) for bounded $\kappa$, when    the norm $\|u \, \nu_\Sigma\|_{C^{1+s+\alpha}(\Sigma)}$ is small. Recall that this is a reasonable assumption by  \eqref{product 3}.  We denote
\[
\Phi_u(x) := x + u(x)\nu(x)
\]
and  write $K_u$ defined in \eqref{kernel 1}  as
\[
K_u(y,x) = \frac{1}{|\Phi_u(y) -\Phi_u(x)|^{n+1+s}}.
\] 
 We study also the linearization of   $K_u$, which means that for a given $w \in C^{1+s+\alpha}(\Sigma)$ we consider 
\beq 
\label{differentiate K}
\frac{d}{d\xi} \Big|_{\xi = 0} K_{u+ \xi w}(y,x)  = -\frac{n+1+s}{|\Phi_u(y) -\Phi_u(x)|^{n+3+s}} \big(  \Phi_u(y) -\Phi_u(x)\big) \cdot  \big(  w(y)\nu(y) -w(x) \nu(x)\big).
\eeq

\begin{lemma}
\label{kernel lemma}
Assume that  $u \in C^{1+s+\alpha}(\Sigma)$  is such that $\|u\|_{C^{1+s+\alpha}(\Sigma)} + \|u \,  \nu_\Sigma\|_{C^{1+s+\alpha}(\Sigma)} \leq \delta$  and $w \in C^{1+s+\alpha}(\Sigma)$. Then the following hold.
\begin{itemize}
\item[(a)] When $\delta$ is small enough the kernel $K_u$ defined in \eqref{kernel 1} belongs to the class $\mathcal{S}_{\kappa_1}$,  with $\kappa_1 \leq C$. 

\item[(b)]   When $\delta$ is small enough the kernel $\frac{d}{d\xi} \Big|_{\xi = 0} K_{u+ \xi w}$  belongs to the class $\mathcal{S}_{\kappa_2}$, with 
\[
\kappa_2 \leq C \|w \, \nu_\Sigma\|_{C^{1+s+\alpha}(\Sigma)} .
\]
\end{itemize}
\end{lemma}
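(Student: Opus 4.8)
The plan is to verify the three conditions (i)--(iii) of Definition \ref{S kappa} for each of the two kernels. Write $\Phi_u(x)=x+u(x)\nu_\Sigma(x)$ as in the statement. The starting point, used throughout, is the non-degeneracy estimate: since the Lipschitz seminorm of $u\,\nu_\Sigma$ on $\Sigma$ is controlled by $\|u\,\nu_\Sigma\|_{C^{1+s+\alpha}(\Sigma)}\le\delta$, one has $|u(y)\nu_\Sigma(y)-u(x)\nu_\Sigma(x)|\le C\delta\,|y-x|$ with $C$ uniform, hence $|\Phi_u(y)-\Phi_u(x)|\ge(1-C\delta)|y-x|\ge\tfrac12|y-x|$ for $\delta$ small, while $|\Phi_u(y)-\Phi_u(x)|\le C|y-x|$ and $\|D_\tau\Phi_u\|_{C^0(\Sigma)}\le C$. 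From these, conditions (i) and (ii) for $K_u$ follow at once by differentiating the explicit formula for $K_u$ in $x$, giving $\kappa_1\le C$ uniform. For part (b), the same computation applied to \eqref{differentiate K}, together with the observations that the factor $w(y)\nu_\Sigma(y)-w(x)\nu_\Sigma(x)$ is bounded by $\|w\,\nu_\Sigma\|_{C^1(\Sigma)}\,|y-x|\le\|w\,\nu_\Sigma\|_{C^{1+s+\alpha}(\Sigma)}\,|y-x|$, and that $\nabla_x$ falling on this factor produces $D_\tau(w\,\nu_\Sigma)(x)$, bounded by $\|w\,\nu_\Sigma\|_{C^{1+s+\alpha}(\Sigma)}$, yields conditions (i) and (ii) with $\kappa_2\le C\|w\,\nu_\Sigma\|_{C^{1+s+\alpha}(\Sigma)}$.

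The real content is condition (iii), and it is here that the non-flatness of $\Sigma$ enters. I would first treat the model case $u=0$, i.e. the kernel $|y-x|^{-n-1-s}$, which will serve as a building block. Decompose $y-x=(y-x)^{\mathrm{tan}}_{y}+\big((y-x)\cdot\nu_\Sigma(y)\big)\nu_\Sigma(y)$, where $(y-x)^{\mathrm{tan}}_{y}$ is the component tangent to $\Sigma$ at $y$. The normal component has size $O(|y-x|^2)$ because $\Sigma$ is $C^{1,1}$, so that part of $\psi$ is absolutely convergent; after writing the inner $\nu_\Sigma(y)$ as $\nu_\Sigma(x)+(\nu_\Sigma(y)-\nu_\Sigma(x))$ and checking the hypotheses (1)--(2) of Lemma \ref{lemma aux 2} for the resulting factors $F(y,x)$ (for the second one through a gradient bound on $w\mapsto(y-w)\cdot(\nu_\Sigma(w)-\nu_\Sigma(y))$), it is $C^\alpha$ with uniform norm. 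For the tangential component one uses $(y-x)^{\mathrm{tan}}_{y}\,|y-x|^{-n-1-s}=-\tfrac{1}{n-1+s}\,\nabla^\Sigma_y|y-x|^{-(n-1+s)}$ and the divergence theorem on $\Sigma$: the potential $|y-x|^{-(n-1+s)}$ is integrable, so one applies the divergence theorem on $\Sigma\setminus B_\eps(x)$ and lets $\eps\to0$. The one point to check is that the excised boundary term vanishes, and this is where the approximate antisymmetry is essential: that term equals, up to sign, $\eps^{-(n-1+s)}\int_{\Sigma\cap\pa B_\eps(x)}\eta\,d\Ha^{n-1}$, while applying the divergence theorem on $\Sigma\cap B_\eps(x)$ to tangential projections of constant vector fields gives $\big|\int_{\Sigma\cap\pa B_\eps(x)}\eta\,d\Ha^{n-1}\big|\le C\|H_\Sigma\|_{L^\infty(\Sigma)}\,\Ha^n(\Sigma\cap B_\eps(x))\le C\eps^{n}$, so the boundary term is $O(\eps^{1-s})\to0$. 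One is left with $-\tfrac{1}{n-1+s}\int_\Sigma H_\Sigma(y)\,|y-x|^{-(n-1+s)}\,\nu_\Sigma(y)\,d\Ha^n_y$, a Riesz-type potential of the density $H_\Sigma\nu_\Sigma\in L^\infty(\Sigma)$ (with norm controlled by the $C^{1,1}$-norm) against the kernel $|y-x|^{-(n-1+s)}$, which is $C^\alpha$ with uniform norm for $\alpha<1-s$ by a routine near/far splitting at scale $|x-x'|$. All constants here depend only on the $C^{1,1}$-norm of $\Sigma$.

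For general $u$, and for the derivative kernel, I would run the same scheme on $\Sigma$: split $\Phi_u(y)-\Phi_u(x)$ into its $\Sigma$-tangential (at $y$) and normal parts. From $D_\tau^\Sigma\Phi_u(y)=(I-\nu_\Sigma(y)\otimes\nu_\Sigma(y))+D_\tau^\Sigma(u\,\nu_\Sigma)(y)$ the tangential part equals $-\tfrac{1}{n-1+s}\nabla^\Sigma_y|\Phi_u(y)-\Phi_u(x)|^{-(n-1+s)}$ modulo a term carrying the small factor $D_\tau^\Sigma(u\,\nu_\Sigma)$. The divergence-theorem step goes through as before — the boundary term vanishes because $|\Phi_u(y)-\Phi_u(x)|\simeq|y-x|$ and the leading tangent-sphere integrand $\omega\mapsto|(I+D_\tau(u\,\nu_\Sigma)(x))\omega|^{-(n-1+s)}$ is still even — reducing the main piece to a uniformly $C^\alpha$ Riesz potential of $H_\Sigma\nu_\Sigma$ against $|\Phi_u(y)-\Phi_u(x)|^{-(n-1+s)}$. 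The remaining pieces — the $D_\tau^\Sigma(u\,\nu_\Sigma)$ term, the normal part (whose only dangerous contribution, from $u(y)-u(x)=O(|y-x|)$, is its linear-in-$(y-x)$ part), and the errors produced by replacing $|\Phi_u(y)-\Phi_u(x)|^{-\gamma}$ by $|y-x|^{-\gamma}$ — are handled by Taylor-expanding every factor to first order in $y-x$: the first-order coefficients recombine into $\psi$ of the $u=0$ kernel (or the analogous tangential-part integral) multiplied by a $C^\alpha$-coefficient which is $O(\delta)$ in case (a) and proportional to $\|w\,\nu_\Sigma\|_{C^{1+s+\alpha}(\Sigma)}$ in case (b), while the second-order remainders satisfy (1)--(2) of Lemma \ref{lemma aux 2} and are thus directly $C^\alpha$. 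Collecting terms gives $\kappa_1\le C$ and $\kappa_2\le C\|w\,\nu_\Sigma\|_{C^{1+s+\alpha}(\Sigma)}$.

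The main obstacle is precisely condition (iii): organizing the argument so that the genuinely singular (only principal-value convergent, of size $|y-x|^{-n-s}$) pieces are never estimated absolutely but instead recombined into the already-controlled $\psi$ of the flat kernel; making the excised boundary term in the divergence theorem rigorous via the near-cancellation coming from the evenness of the kernel over the tangent sphere; and, throughout, keeping every constant uniform in the $C^{1,1}$-norm of $\Sigma$, in particular never invoking any bound on $\|\nu_\Sigma\|_{C^{1+s+\alpha}(\Sigma)}$. For part (b) there is the additional bookkeeping of verifying that the factor $\|w\,\nu_\Sigma\|_{C^{1+s+\alpha}(\Sigma)}$ survives each of these steps.
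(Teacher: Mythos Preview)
Your proposal is correct and uses the same core ideas as the paper: verify (i)--(ii) directly from the bilipschitz bound $|\Phi_u(y)-\Phi_u(x)|\simeq|y-x|$, and for (iii) use the divergence theorem on $\Sigma$ to convert the tangential part of the integrand into a Riesz-type potential of $H_\Sigma\nu_\Sigma$ against $|\Phi_u(y)-\Phi_u(x)|^{-(n-1+s)}$, while the remaining pieces gain an extra $|y-x|^{s+\alpha}$ and fall under Lemma~\ref{lemma aux 2}. Your explicit handling of the excised boundary term (via a second divergence theorem on $\Sigma\cap B_\eps(x)$ with constant fields, giving $|\int\eta|\le C\eps^n$ and hence an $O(\eps^{1-s})$ boundary contribution) is in fact more careful than the paper, which just remarks that one should regularize the kernel as in Lemma~\ref{calculation}.

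The organizational difference is that you do the case $u=0$ first and then perturb, asserting that the error terms recombine into $\psi$ of the $u=0$ kernel times an $O(\delta)$ coefficient. The paper instead derives, directly for general small $u$, a single identity
\[
\big(I+\tilde Q(x,D_\tau(u\nu_\Sigma))\big)\,\psi(x)=\int_\Sigma F_{\Phi_u}(y,x)\,K_u(y,x)\,d\Ha^n_y,
\]
with an explicit $F_{\Phi_u}$ satisfying the hypotheses of Lemma~\ref{lemma aux 2} with $\kappa_0\le C$; since $\tilde Q=O(\delta)$ the matrix inverts and (iii) follows. Be aware that in your scheme the singular remainders (e.g.\ the $D_\tau(u\nu_\Sigma)$ correction, after freezing the coefficient at $x$) more naturally produce $O(\delta)$ multiples of $\psi$ for $K_u$ itself rather than for $K_0$; this is not a gap --- absorbing those to the left is exactly what the paper's matrix $I+\tilde Q$ does --- but your phrasing ``recombine into $\psi$ of the $u=0$ kernel'' glosses over this absorption. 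For part~(b) the paper simply differentiates the same identity in $\xi$, which avoids rerunning the decomposition and makes the factor $\|w\nu_\Sigma\|_{C^{1+s+\alpha}}$ appear transparently.
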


\begin{proof} \textbf{Claim (a):}  We denote $\Phi_u(x) := x + u(x)\nu(x)$ and recall that 
\[
K_u(y,x) = \frac{1}{|\Phi_u(y) -\Phi_u(x)|^{n+1+s}}.
\]
It follows from the assumption $\|u \,  \nu_\Sigma\|_{C^{1+s+\alpha}(\Sigma)} \leq \delta$ that 
\beq 
\label{comp dist}
\frac{1}{2}|y-x| \leq |\Phi_u(y) -\Phi_u(x)| \leq 2 |y-x|,
\eeq
when $\delta$ is small. Therefore it is clear that $K_u$ satisfies the conditions (i) and (ii) in Definition \ref{S kappa} of $\mathcal{S}_{\kappa_1}$ with $\kappa_1 \leq C$.

The condition (iii) in Definition \ref{S kappa} is technically more involved to verify. We note that in principle we should regularize the kernel $K_u$  for the forthcoming calculations as in the proof of Lemma \ref{calculation}, but we ignore this since it can be done with obvious changes.   Recall that we need to show that the function
\beq \label{psi holder}
\psi(x) = \int_{\Sigma}(y-x)K_u(y,x)\, d \Ha_y^n
\eeq
is H\"older continuous. The idea is to use integration parts in order to write $\psi$ as a nonsingular integral.  To be more precise, we prove the following equality 
\beq 
\label{long}
\big(I + \tilde{Q}(x, D_\tau (u \, \nu_\Sigma)) \big) \overbrace{\int_{\Sigma}(y-x)K_u(y,x)\, d \Ha_y^n}^{=\psi(x)} =  \int_{\Sigma} F_{\Phi_u}(y,x) \, K_u(y,x) d \Ha_y^n,
\eeq
where $\tilde Q$ is a smooth  function with $\tilde Q(x, 0) = 0$ for all $x \in \Sigma$, and 
\beq 
\label{long 0}
\begin{split}
F_{\Phi_u}(y,x) =  &-\frac{H_\Sigma(y) \, \nu(y) }{(n-1+s)} |\Phi_u(y) - \Phi_u(x)|^{2} + \big((y-x)\cdot \nu(x)\big) \,  \nu(x)   \\
&-(D_\tau \Phi_u(y)-D_\tau \Phi_u(x))^T(\Phi_u(y) - \Phi_u(x)) \\
&- D_\tau \Phi_u(x)^T \big(\Phi_u(y) - \Phi_u(x) - D_\tau \Phi_u(x)(y-x)\big).
\end{split}
\eeq
Recall that we already know that $K_u$ satisfies the conditions (i) and (ii) in  Definition \ref{S kappa}.  The idea is then to show that  $F_{\Phi_u}$ defined in \eqref{long 0}  
satisfies the assumptions   of Lemma \ref{lemma aux 2}, which then implies that the RHS of \eqref{long} defines a H\"older continuous function. 

In order to show \eqref{long} we shorten the notation by $\Phi(x) = \Phi_u(x)$ and  notice that the tangential gradient of $y \mapsto |\Phi(y) - \Phi(x)|^{-n+1-s}$ is 
\[
\begin{split}
D_{\tau(y)} |\Phi(y) - \Phi(x)|^{-n+1-s} &=   -(n-1+s)\frac{D_\tau\Phi(y)^T (\Phi(y) - \Phi(x))}{|\Phi(y) - \Phi(x)|^{n+1+s}}  \\
&=   -(n-1+s)\frac{D_\tau\Phi(x)^T (\Phi(y) - \Phi(x))}{|\Phi(y) - \Phi(x)|^{n+1+s}}\\
&\,\,\,\,\,\,\,\,\,   -(n-1+s)\frac{(D_\tau\Phi(y) - D_\tau\Phi(x))^T(\Phi(y) - \Phi(x))}{|\Phi(y) - \Phi(x)|^{n+1+s}} .
\end{split}
\]
By the divergence theorem  it holds
\[
\int_{\Sigma} D_{\tau(y)} |\Phi(y) - \Phi(x)|^{-n+1-s}\, d \Ha_y^n = \int_{\Sigma} \frac{H_\Sigma(y) \, \nu(y)}{|\Phi(y) - \Phi(x)|^{n-1+s}}\, d \Ha_y^n . 
\]
Therefore the two previous equalities yield
\beq 
\label{long 1}
\begin{split}
D_\tau\Phi(x)^T  \int_{\Sigma} \frac{\Phi(y) - \Phi(x)}{|\Phi(y) - \Phi(x)|^{n+1+s}}\, d \Ha_y^n   = &-\frac{1}{(n-1+s)} \int_{\Sigma} \frac{H_\Sigma(y) \, \nu(y)}{|\Phi(y) - \Phi(x)|^{n-1+s}}\, d \Ha_y^n  \\
&- \int_{\Sigma}\frac{(D_\tau\Phi(y) - D_\tau\Phi(x))^T(\Phi(y) - \Phi(x))}{|\Phi(y) - \Phi(x)|^{n+1+s}}\, d \Ha_y^n .
\end{split}
\eeq
We write the term on the left-hand-side  as 
\beq 
\label{long 2}
\begin{split}
D_\tau\Phi(x)^T  \int_{\Sigma} \frac{\Phi(y) - \Phi(x)}{|\Phi(y) - \Phi(x)|^{n+1+s}}\, d \Ha_y^n   = &(D_\tau\Phi(x)^T \, D_\tau \Phi(x))   \int_{\Sigma} \frac{y-x}{|\Phi(y) - \Phi(x)|^{n+1+s}}\, d \Ha_y^n \\
&+D_\tau\Phi(x)^T  \int_{\Sigma} \frac{\Phi(y) - \Phi(x) - D_\tau \Phi(x) (y-x)}{|\Phi(y) - \Phi(x)|^{n+1+s}}\, d \Ha_y^n .
\end{split}
\eeq
The  equality  \eqref{long}   then follows from \eqref{long 1}, \eqref{long 2} and from the fact that 
\[
D_\tau\Phi(x)^T \, D_\tau \Phi(x) = I - \nu(x) \otimes \nu(x) + \tilde{Q}(x, D_\tau (u \, \nu_\Sigma)),
\]
where $\tilde{Q}(x,0) = 0$ for all $x \in \Sigma$.

When $\|u\, \nu_\Sigma\|_{C^{1}}\leq \delta$, with $\delta$ small enough, the matrix $I + \tilde{Q}(x, D_\tau (u \, \nu_\Sigma))$ is invertible. Therefore in order to show that 
\[
\psi(x) = \int_{\Sigma}(y-x)K_u(y,x)\, d \Ha_y^n
\]
 is H\"older continuous it is enough to show that the RHS in \eqref{long} is H\"older continuous. As we mentioned, we will do this by showing that $F_{\Phi_u}$ defined in \eqref{long 0} 
satisfies the assumptions  of Lemma \ref{lemma aux 2} with $\kappa_0 \leq C $.

First,  by using \eqref{comp dist} and $\|u \, \nu_\Sigma\|_{C^{1+s+\alpha}(\Sigma)} \leq \delta$  it is straightforward to check that
\[
F_1(y,x) = -\frac{1}{(n-1+s)} H_\Sigma(y) \, \nu(y) |\Phi_u(y) - \Phi_u(x)|^{2}   -(D_\tau \Phi_u(y)-D_\tau \Phi_u(x))^T(\Phi_u(y) - \Phi_u(x)) 
\]
satisfies the assumptions  of Lemma \ref{lemma aux 2} with $\kappa_0 \leq C$. Moreover we have  that   
\[
|\Phi_u(y) - \Phi_u(x) - D_\tau \Phi_u(x)(y-x) | \leq C  |y-x|^{1+s+\alpha}.
\]
Therefore, arguing as  with \eqref{aux F_3} we deduce that 
\[
F_2(y,x) = - D_\tau \Phi_u(x)^T \big(\Phi_u(y) - \Phi_u(x) - D_\tau \Phi_u(x)(y-x)\big)
\]
 also  satisfies the assumptions  of Lemma \ref{lemma aux 2} with $\kappa_0 \leq C $.

 We need yet to treat the term 
\[
F_3(y,x) =  \big((y-x)\cdot \nu(x)\big) \,  \nu(x) .
\]
Since $\Sigma$ is uniformly  $C^{1,1}$-regular hypersurface, there is a constant $C$ such that $|(y-x)\cdot \nu(x)| \leq C|y-x|^2$  for every $x,y \in \Sigma$. Therefore $F_3$ satisfies the assumption (1)  of Lemma \ref{lemma aux 2}. 
The assumption (2) is straightforward to verify but we do this for the reader's convenience.  We have  
\[
\begin{split}
|F_3(y,z) - F_3(y,x) |  &=  \big|  (\nu(z) \otimes \nu(z))  (y-z)  -   (\nu(x) \otimes \nu(x)) (y-x) \big| \\
&\leq \big|  (\nu(z) \otimes \nu(z) -  \nu(x) \otimes \nu(x)) (y-z)\big|  +  \big| (\nu(x) \otimes \nu(x)) ( (y-z) - (y-x)) \big| \\
&\leq \big|  (\nu(z) \otimes \nu(z) -  \nu(x) \otimes \nu(x))\big| |y-z|  +  \big| \nu(x) \cdot (x-z) \big| \\
&\leq C|z-x||y-z| + C|z-x|^2.
\end{split}
\]
Note that   $|y-x| \geq 2|z-x|$  implies $|y-z| \leq 2 |y-x|$. Therefore $F_3$ satisfies the assumption (2)  of Lemma \ref{lemma aux 2} and  the RHS of \eqref{long} is H\"older continuous. This proves the claim (a).

\medskip

\textbf{Claim (b):}

We denote $\pa_w K_{u} = \frac{d}{d\xi} \Big|_{\xi = 0} K_{u+ \xi w} $ for short. Note that from \eqref{differentiate K} and \eqref{comp dist} it follows
\[
|\pa_w K_{u}(y,x) | \leq \frac{C\|w\, \nu_\Sigma\|_{C^{1}(\Sigma)}}{|y-x|^{n+1+s}}
\]
for every $x,y \in \Sigma, x \neq y$. Therefore $\pa_w K_{u}$ satisfies the condition (i) in Definition \ref{S kappa} with $\kappa \leq C\|w\, \nu_\Sigma\|_{C^1(\Sigma)}$. It is straightforward to check that  
$\pa_w K_{u}$ satisfies also  the condition (ii) in Definition \ref{S kappa} with $\kappa \leq  C\|w\|_{C^1(\Sigma)}$. 

We need thus to verify the last condition, i.e., we show that 
\beq 
\label{kernel b}
\tilde{\psi}(x) = \int_{\Sigma}(y-x)\, \pa_w K_{u}  \, d \Ha_y^n
\eeq 
satisfies $\|\tilde{\psi}\|_{C^\alpha(\Sigma)} \leq C\|w \, \nu_\Sigma\|_{C^{1+s+\alpha}(\Sigma)}$. To this aim we recall that by \eqref{long}  for small $\xi$ it holds 
\beq 
\label{LONG}
\big(I + \tilde{Q}(x, D_\tau (u \, \nu_\Sigma+ \xi w\, \nu_\Sigma) ) \big) \int_{\Sigma}(y-x)K_{u+ \xi w}(y,x)\, d \Ha_y^n =  \int_{\Sigma} F_{\Phi_{u+ \xi w}}(y,x) \, K_{u+ \xi w}(y,x) d \Ha_y^n,
\eeq
where $F_{\Phi_{u+ \xi w}}$  is defined  in \eqref{long 0}.

Let us denote the RHS of \eqref{LONG} by 
\[
\varphi_\xi(x) := \int_{\Sigma} F_{\Phi_{u+ \xi w}}(y,x) \, K_{u+ \xi w}(y,x) d \Ha_y^n .
\]
By differentiating  we have
\[
\frac{\pa }{\pa \xi} \big|_{\xi = 0} \varphi_\xi(x) = \underbrace{\int_{\Sigma} \frac{\pa }{\pa \xi} \big|_{\xi = 0} F_{\Phi_{u+ \xi w}}(y,x) \, K_{u}(y,x) d \Ha_y^n}_{=: \varphi_{\xi,1}}  +  \underbrace{\int_{\Sigma}  F_{\Phi_{u}}(y,x) \, \pa_w K_{u}(y,x) d \Ha_y^n}_{=: \varphi_{\xi,2}}.
\]
Recall that  $\pa_w K_{u}$  satisfies  the conditions (i) and (ii) in Definition \ref{S kappa} with $\kappa \leq  C\|w \, \nu_\Sigma\|_{C^1(\Sigma)}$ and we already proved that $F_{\Phi_{u}}$ satisfies the assumptions (1) and (2) of  Lemma \ref{lemma aux 2}  with $\kappa_0 \leq C$. Lemma \ref{lemma aux 2} then implies that 
\[
\|\varphi_{\xi,2} \|_{C^\alpha(\Sigma)} \leq C\|w\, \nu_\Sigma\|_{C^1(\Sigma)} .
\]

To treat $\varphi_{\xi,1}$ we recall that we already proved that $K_{u} \in \mathcal{S}_\kappa$ for $\kappa \leq C$. We  simplify the expression  \eqref{long 0} by writing it as
\[
\begin{split}
F_{\Phi_{u+ \xi w}}(y,x)=  &-\frac{H_\Sigma(y) \, \nu(y) }{(n-1+s)} |\Phi_{u+ \xi w}(y) - \Phi_{u+ \xi w}(x)|^{2} + \big((y-x)\cdot \nu(x)\big) \,  \nu(x)   \\
&-D_\tau \Phi_{u+ \xi w}(y)^T(\Phi_{u+ \xi w}(y) - \Phi_{u+ \xi w}(x)) + D_\tau \Phi_{u+ \xi w}(x)^T  D_\tau \Phi_{u+ \xi w}(x) (y-x).
\end{split}
\]
We denote $\Phi_u'(x) = \frac{\pa }{\pa \xi} \big|_{\xi = 0} \Phi_{u + \xi w}(x) = w(x) \nu(x)$, differentiate the above equality and  obtain 
\[
\begin{split}
 \frac{\pa }{\pa \xi} \big|_{\xi = 0} F_{\Phi_{u+ \xi w}}(y,x)  =  &-\frac{2 H_\Sigma(y) \, \nu(y)}{(n-1+s)}  (\Phi_u(y) - \Phi_u(x))\cdot (\Phi_u'(y) -\Phi_u'(x) )   \\
&-D_\tau \Phi_u'(y)^T (\Phi_u(y) - \Phi_u(x)) -D_\tau \Phi_u(y)^T(\Phi_u'(y) - \Phi_u'(x)) \\
&+ \big( D_\tau \Phi_u'(x)^T  D_\tau \Phi_u(x)+  D_\tau  \Phi_u(x)^T D_\tau \Phi_u'(x)\big) (y-x) . 
\end{split}
\]
Since $\|\Phi_u' \|_{C^{1+s+\alpha}(\Sigma)}, \|D_\tau \Phi_u' \|_{C^{s+\alpha}(\Sigma)}  \leq C \| w \, \nu_\Sigma \|_{C^{1+s+\alpha}(\Sigma)} $ we  may use Lemma \ref{lemma aux}  to deduce  
\[
\|\varphi_{\xi,1} \|_{C^{1+s+\alpha}(\Sigma)}  \leq C  \| w \, \nu_\Sigma \|_{C^{1+s+\alpha}(\Sigma)}.
\]
The H\"older continuity of $\tilde{\psi}$, defined in \eqref{kernel b},  then follows from \eqref{LONG} and from the fact that 
\[
\|\frac{d}{d\xi} \Big|_{\xi = 0}\tilde{Q}(x, D_\tau (u \, \nu_\Sigma+ \xi w\, \nu_\Sigma) ) \|_{C^\alpha(\Sigma)} \leq \|w \, \nu_\Sigma\|_{C^{1+\alpha}(\Sigma)}.
\]
Hence we have 
\[
\|\tilde{\psi}\|_{C^\alpha(\Sigma)} \leq C\|w  \, \nu_\Sigma\|_{C^{1+s + \alpha}(\Sigma)}.
\]
This proves the claim (b). 
\end{proof}

\begin{remark} \label{flat S kappa 2}
It is clear that the results of Lemma \ref{lemma aux} and Lemma \ref{kernel lemma} hold also in  the case $\Sigma = \R^n$ if we assume that the functions  $v_1, v_2, v_3, u$ and $w$ are in the corresponding H\"older spaces globally, i.e., $v_i \in C^{1+s+\alpha}(\R^n)$ for $i=1,2,3$ and $\|u\|_{C^{1+s+\alpha}(\R^n)} \leq \delta$. 
\end{remark}

We may use the previous results to prove that when $\|u\|_{C^{1+s+\alpha}}$ is  small then  the remainder terms  $R_{1,u}$ and $R_{2,u}$ defined in \eqref{R_1} and \eqref{R_2} are small. This is stated more precisely in   the following proposition. 
Recall that we may ignore the dependence on $C^{1}$-norm of $\nu_\Sigma$, but we do however need to keep track on the dependence on higher norm of $\nu_\Sigma$ for later purpose. 
\begin{proposition}
\label{remainder estimate}
Assume $u \in C^{1 + s+\alpha}(\Sigma)$ is such that  $\|u\|_{C^{1 + s+\alpha}(\Sigma)} + \|u \, \nu_\Sigma\|_{C^{1 + s+\alpha}(\Sigma)} \leq \delta$, and let $R_{1,u}$ and $R_{2,u}$ be the functions defined in  \eqref{R_1} and in  \eqref{R_2} respectively. Then for $\delta>0$ small enough  it holds 
\[
\|R_{1,u}\|_{C^{\alpha}(\Sigma)} \leq C \delta  \, \|u\|_{C^{1+s+\alpha}(\Sigma)} \qquad \text{and} \qquad \|R_{2,u}\|_{C^{\alpha}(\Sigma)} \leq C \delta  \|\nu_\Sigma\|_{C^{1+s+\alpha}(\Sigma)}. 
\]
\end{proposition}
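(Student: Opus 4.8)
The strategy is to recognize both $R_{1,u}$ and $R_{2,u}$ as iterated integrals in auxiliary parameters $\xi, t'$ of expressions built from the kernels $K_{\xi u}$ and their $\xi$-derivatives, and then to apply Lemma \ref{lemma aux} and Lemma \ref{kernel lemma} uniformly in those parameters. First I would treat $R_{1,u}$. By \eqref{R_1}, for fixed $\xi,t'$ the inner integral is $\int_\Sigma (u(y)-u(x))\,\frac{d}{d\xi}\big((1+Q_1(y,\xi u,\xi\nabla u))K_{\xi u}(y,x)\big)\,d\Ha_y^n$. I would expand the $\xi$-derivative by the product rule into two pieces: one where the derivative falls on $Q_1$ (producing a factor $\pa_\xi Q_1(y,\xi u,\xi\nabla u)$, which is a bounded $C^{s+\alpha}$-function of $y$ times $K_{\xi u}$) and one where it falls on the kernel (producing $\frac{d}{d\xi}K_{\xi u} = \pa_w K_{\xi u}$ with $w=u$, which by Lemma \ref{kernel lemma}(b) lies in $\mathcal S_{\kappa_2}$ with $\kappa_2 \le C\|u\,\nu_\Sigma\|_{C^{1+s+\alpha}}\le C\delta$). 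Since $\|\xi u\|_{C^{1+s+\alpha}}+\|\xi u\,\nu_\Sigma\|_{C^{1+s+\alpha}}\le\delta$ for all $\xi\in[0,1]$, Lemma \ref{kernel lemma}(a) gives $K_{\xi u}\in\mathcal S_{\kappa_1}$ with $\kappa_1\le C$. Applying Lemma \ref{lemma aux} with $v_1=u\in C^{1+s+\alpha}$, $v_2$ the bounded $C^{s+\alpha}$-coefficient coming from $Q_1$ (or the constant $1$), $v_3\equiv 1$, to the first piece, and Lemma \ref{lemma aux} again with the kernel replaced by $\pa_u K_{\xi u}\in\mathcal S_{\kappa_2}$ for the second piece, yields in both cases a $C^\alpha$-bound $\le C\delta\|u\|_{C^{1+s+\alpha}}$ for the inner integral, uniformly in $\xi,t'$. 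Integrating in $\xi\in[0,t']$ and $t'\in[0,1]$ (the $C^\alpha$-norm passes under these integrals) gives $\|R_{1,u}\|_{C^\alpha(\Sigma)}\le C\delta\|u\|_{C^{1+s+\alpha}(\Sigma)}$.

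Next I would handle $R_{2,u}$, which by \eqref{R_2} has two terms. For the first term, $\int_0^1\int_0^{t'}\int_\Sigma |\nu(y)-\nu(x)|^2\,\frac{d}{d\xi}\big((1+Q_1)K_{\xi u}\big)\,d\Ha_y^n\,d\xi\,dt'$, the point is to write $|\nu(y)-\nu(x)|^2 = (\nu(y)-\nu(x))\cdot(\nu(y)-\nu(x))$ and view it as $v_1(y)-v_1(x)$ tested against... — but $\nu$ is only $C^1$, not $C^{1+s+\alpha}$ uniformly, so instead I would keep one factor $(\nu(y)-\nu(x))$ as the ``$v_1$-difference'' after noting $\nu_\Sigma\in C^{1+s+\alpha}$ with norm $\|\nu_\Sigma\|_{C^{1+s+\alpha}}$ (not uniform, hence the appearance of $\|\nu_\Sigma\|_{C^{1+s+\alpha}}$ on the right-hand side), and the other factor $(\nu(y)-\nu(x))$, which satisfies $|\nu(y)-\nu(x)|\le C|y-x|$, as the ``$v_2$''-type factor of order $1\ge s+\alpha$ — or more cleanly, I would apply Lemma \ref{lemma aux 2} directly with $F(y,x)=|\nu(y)-\nu(x)|^2\cdot(\text{bounded }Q_1\text{-coefficient})$, checking assumptions (1) and (2) of that lemma: $|F(y,x)|\le C|y-x|^2\le C|y-x|^{1+s+\alpha}$ since $s+\alpha<1$, and the increment bound (2) follows from $\nu\in C^{1}$ together with an argument like the one used for $F_3$ in the proof of Lemma \ref{kernel lemma}(a). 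The subtlety is that $Q_1$ also depends on $\nabla u$, so one extra Hölder-increment estimate on $Q_1(y,\xi u,\xi\nabla u)$ in the $y$-variable is needed; this costs $\|u\|_{C^{1+s+\alpha}}$ but that is bounded by $\delta$. With $\kappa$ taken from $K_{\xi u}\in\mathcal S_{\kappa_1}$, $\kappa_1\le C$, and from $\pa_u K_{\xi u}\in\mathcal S_{\kappa_2}$, $\kappa_2\le C\delta$, Lemma \ref{lemma aux 2} gives this term a $C^\alpha$-bound $\le C\delta\|\nu_\Sigma\|_{C^{1+s+\alpha}(\Sigma)}$. For the second term of $R_{2,u}$, namely $2\int_0^1\int_\Sigma(\nu(y)-\nu(x))\cdot Q_2(y,t'u,t'\nabla u)\,K_{t'u}(y,x)\,d\Ha_y^n\,dt'$, I would use that $Q_2(y,0,0)=0$ so $|Q_2(y,t'u,t'\nabla u)|\le C\delta$, take $F(y,x)=(\nu(y)-\nu(x))\cdot Q_2(y,t'u,t'\nabla u)$, and again apply Lemma \ref{lemma aux 2}: $|F(y,x)|\le C\delta|y-x|$ — here one must be slightly careful since I only get $|y-x|^1$, but multiplying by the $C^\alpha$-smallness of one more factor, or rather noting that the increment bound in (2) is what carries the Hölder regularity while (1) only needs the power $1+s+\alpha$, which again holds because $s+\alpha<1$ and because the $Q_2$ factor contributes a genuine $\delta$; the increment estimate uses $\nu_\Sigma\in C^{1+s+\alpha}$ paired against $Q_2$'s Lipschitz dependence, giving $\le C\delta\|\nu_\Sigma\|_{C^{1+s+\alpha}}\,|z-x|^{s+\alpha}|y-x|$. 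Summing the two contributions gives $\|R_{2,u}\|_{C^\alpha(\Sigma)}\le C\delta\|\nu_\Sigma\|_{C^{1+s+\alpha}(\Sigma)}$.

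The main obstacle I anticipate is the bookkeeping for $R_{2,u}$: unlike $R_{1,u}$, where every factor of the integrand is either $u$ (hence $C^{1+s+\alpha}$ with small norm) or a smooth coefficient, here the relevant ``large'' quantity is $\nu_\Sigma$, whose $C^{1+s+\alpha}$-norm is \emph{not} uniform (it blows up like $\eps^{-s-\alpha}$), so I must make sure that exactly one power of $\|\nu_\Sigma\|_{C^{1+s+\alpha}}$ appears on the right and that it is always multiplied by a genuine factor of $\delta$ coming either from $Q_i$ vanishing at the origin or from the $\xi$-derivative of the kernel (Lemma \ref{kernel lemma}(b) with $w=u$, which gives $\kappa_2\le C\delta$). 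The structural reason this works is that $|\nu(y)-\nu(x)|^2$ and $(\nu(y)-\nu(x))\cdot Q_2$ both vanish to order at least $|y-x|^{1}>|y-x|^{1+s+\alpha}$ is \emph{false}, so one cannot naively put $|\nu(y)-\nu(x)|^2$ into the ``$F$'' slot of Lemma \ref{lemma aux 2} expecting order $1+s+\alpha$; rather, one uses that $|\nu(y)-\nu(x)|^2\le C|y-x|^2$ which is $\le C|y-x|^{1+s+\alpha}$ precisely because $1+s+\alpha<2$, so assumption (1) is fine, and for assumption (2) one splits the increment exactly as in the $F_3$-computation in Lemma \ref{kernel lemma}(a). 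A secondary technical point, already flagged in the paper, is that one should formally regularize the kernel $K_u$ (replace $|z|^{-n-1-s}$ by $(|z|^2+\e)^{-(n+1+s)/2}$ and let $\e\to0$) before differentiating in $\xi$ and integrating by parts, exactly as in the proof of Lemma \ref{calculation}; I would remark that this is routine and suppress it. A third routine point: the $C^\alpha$-norm commutes with the $\int_0^1\int_0^{t'}d\xi\,dt'$ integrations by Minkowski's inequality, and the hypotheses $\|\xi u\|_{C^{1+s+\alpha}}+\|\xi u\,\nu_\Sigma\|_{C^{1+s+\alpha}}\le\delta$ needed to invoke Lemma \ref{kernel lemma} hold for every $\xi\in[0,1]$ because $\xi\le1$.
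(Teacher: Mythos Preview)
Your treatment of $R_{1,u}$ is correct and coincides with the paper's: expand the $\xi$-derivative by the product rule, invoke Lemma~\ref{kernel lemma} for the two kernels, and apply Lemma~\ref{lemma aux} with $v_1=u$.

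For the \emph{first} term of $R_{2,u}$ your alternative via Lemma~\ref{lemma aux 2} with $F(y,x)=|\nu(y)-\nu(x)|^2\cdot g(y)$ actually works (and in fact yields $C\delta$ rather than $C\delta\|\nu_\Sigma\|_{C^{1+s+\alpha}}$, since only $\nu\in C^1$ is used). The paper takes a different, shorter route here: it uses the identity $|\nu(y)-\nu(x)|^2=-2\,\nu(x)\cdot(\nu(y)-\nu(x))$ to recognise the integrand as exactly the form \eqref{tarviit monesti} with $v=\nu$, and then quotes the already-proved estimate \eqref{tarviit}.

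There is, however, a genuine gap in your handling of the \emph{second} term of $R_{2,u}$. Taking $F(y,x)=(\nu(y)-\nu(x))\cdot Q_2(y,t'u,t'\nabla u)$ you only get $|F(y,x)|\le C\delta\,|y-x|$, and $|y-x|$ is \emph{not} bounded by $C|y-x|^{1+s+\alpha}$ near $y=x$ (the inequality goes the wrong way since $1<1+s+\alpha$). Hence assumption~(1) of Lemma~\ref{lemma aux 2} fails; in fact the integrand $|F(y,x)|\,|K_{t'u}(y,x)|\sim |y-x|^{-n-s}$ is not even absolutely integrable over $\Sigma$, so no amount of bookkeeping with assumption~(2) can rescue the argument. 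Your sentence ``(1) only needs the power $1+s+\alpha$, which again holds because $s+\alpha<1$'' is the error: $s+\alpha<1$ is what makes $|y-x|^2\le C|y-x|^{1+s+\alpha}$ work for the first term, but it does nothing for $|y-x|^1$.

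The fix is to use Lemma~\ref{lemma aux} instead, which is precisely designed to absorb the missing $s+\alpha$ powers via the principal-value cancellation encoded in condition~(iii) of Definition~\ref{S kappa}. Apply it (componentwise) with $v_1=\nu_\Sigma\in C^{1+s+\alpha}$, $v_2=Q_2(\cdot,t'u,t'\nabla u)\in C^{s+\alpha}$ with $\|v_2\|_{C^{s+\alpha}}\le C\delta$, $v_3\equiv 1$, and $K=K_{t'u}\in\mathcal S_{\kappa_1}$, $\kappa_1\le C$; this gives $\|\psi_2\|_{C^\alpha}\le C\,\delta\,\|\nu_\Sigma\|_{C^{1+s+\alpha}}$. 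This is exactly what the paper does, and it is also why the factor $\|\nu_\Sigma\|_{C^{1+s+\alpha}}$ necessarily appears in the bound for $R_{2,u}$.
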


\begin{proof}
\textbf{Estimate for $R_{1,u}$:} \, Recall that 
\[
R_{1,u}(x) =2 \int_0^{1} \int_0^{t'} \int_{\Sigma} (u(y)- u(x)) \frac{d}{d \xi} \big( (1 + Q_1(y, \xi u, \xi \nabla u))K_{\xi u}(y,x)\big)\, d \Ha_y^{n} d \xi dt' .
\]
For later purpose we prove a slightly  more general claim. Assume that $u$ is as in the assumption, $v \in C^{1+s+\alpha}(\Sigma)$ and define
\beq 
\label{tarviit monesti}
\varphi(x) :=   \int_{\Sigma} (v(y)- v(x)) \frac{d}{d \xi} \big( (1 + Q_1(y, \xi u, \xi \nabla u))K_{\xi u}(y,x)\big)\, d \Ha_y^{n}.
\eeq
We claim that it holds 
\beq 
\label{tarviit}
\|\varphi\|_{C^{\alpha}(\Sigma)} \leq C \delta  \, \|v\|_{C^{1+s+\alpha}}
\eeq
for all $\xi \in [0,1]$. The estimate for $R_{1,u}$  then follows from \eqref{tarviit} by choosing $v = u$.

In order to prove \eqref{tarviit}  we write \eqref{tarviit monesti} as
\[
\begin{split}
\varphi(x) = &\int_{\Sigma}  (v(y)- v(x))  \, \big( \frac{d}{d \xi} Q_1(y, \xi u, \xi \nabla u)\big) \, K_{\xi u}(y,x)  \, d \Ha_y^{n} \\
&+\int_{\Sigma} (v(y)- v(x)) \,  (1 + Q_1(y, \xi u, \xi \nabla u)) \big(\frac{d}{d \xi} K_{\xi u}(y,x)\big) \, d \Ha_y^{n}.
\end{split}
\]
When $\delta$ is small  Lemma \ref{kernel lemma} yields $K_{\xi u} \in \mathcal{S}_{\kappa_1}$ with $\kappa_1 \leq C$ and $ \frac{d}{d \xi} K_{\xi u} \in \mathcal{S}_{\kappa_2}$ with $\kappa_2 \leq C \|u\, \nu_\Sigma\|_{C^{1+s+\alpha}(\Sigma)} \leq C \delta$ for all $\xi \in [0,1]$. We note that the functions $Q_1$ and $Q_2$ in \eqref{nu E_t} depend on the second fundamental form  of $\Sigma$ such that $\|Q_i(x, u , \nabla u)\|_{C^{\beta}(\Sigma)} \leq C(\|u \|_{C^{1+\beta}(\Sigma)} + \|u \nu_\Sigma \|_{C^{1+\beta}(\Sigma)}  )$ for $\beta \in (0,1)$. Therefore it holds
\beq
\label{R_1 extra 2}
 \| 1 + Q_1(y, \xi u, \xi \nabla u) \|_{C^{s+\alpha}(\Sigma)} \leq C
\eeq
and we also have 
\beq
\label{R_1 extra 1}
 \| \frac{d}{d \xi} Q_1(y, \xi u, \xi \nabla u) \|_{C^{s+\alpha}(\Sigma)} \leq  C(\|u \|_{C^{1+s+\alpha}(\Sigma)} + \|u \nu_\Sigma \|_{C^{1+s+\alpha}(\Sigma)}  ) \leq C \delta 
\eeq
for all $\xi \in [0,1]$. Hence, the estimate \eqref{tarviit} follows  from Lemma \ref{lemma aux}.

\medskip

\textbf{Estimate for $R_{2,u}$:} \, Recall that 
\[
\begin{split}
R_{2,u}(x) :=  &\int_0^1 \int_0^{t'} \int_{\Sigma} |\nu(y)- \nu(x)|^2 \frac{d}{d \xi} \big( (1 + Q_1(y, \xi u, \xi \nabla u))K_{\xi u}(y,x) \big)\, d \Ha_y^{n} d \xi dt' \\
&+ 2 \int_0^1\int_{\Sigma}(\nu(y)- \nu(x)) \cdot   Q_2(y, t' u, t' \nabla u)\, K_{t'u} (y,x) \, d \Ha_y^{n} dt'.
\end{split}
\]
We use  $|\nu(y)- \nu(x)|^2  =  -2 \nu(x) \cdot   (\nu(y)- \nu(x))$  and   write the first term as  
\[
\begin{split}
\psi_1(x) &= \int_{\Sigma} |\nu(y)- \nu(x)|^2 \frac{d}{d \xi} \big( (1 + Q_1(y, \xi u, \xi \nabla u))K_{\xi u}(y,x) \big)\, d \Ha_y^{n} \\
&= -2 \nu(x) \cdot \int_{\Sigma} (\nu(y)- \nu(x)) \frac{d}{d \xi} \big( (1 + Q_1(y, \xi u, \xi \nabla u))K_{\xi u}(y,x) \big)\, d \Ha_y^{n}
\end{split}
\]
The function inside the integral is of type \eqref{tarviit monesti} with $v = \nu$ and therefore \eqref{tarviit} implies    $\|\psi_1\|_{C^\alpha(\Sigma)}\leq C \delta  \|\nu_\Sigma\|_{C^{1+s+\alpha}(\Sigma)}$.

Let us fix $t' \in [0,1]$. We need yet to prove that the function 
\[
\psi_2(x) =  \int_{\Sigma}(\nu(y)- \nu(x)) \cdot   Q_2(y, t' u, t' \nabla u)\, K_{t'u} (y,x) \, d \Ha_y^{n}
\]
satisfies $\|\psi_2\|_{C^\alpha(\Sigma)}\leq C\delta$. To this aim we recall that  we may estimate 
\[
\|Q_2(x, t' u, t' \nabla u) \|_{C^{\alpha+s}(\Sigma)} \leq C(\|u \|_{C^{1+s+\alpha}(\Sigma)} + \|u \nu_\Sigma \|_{C^{1+s+\alpha}(\Sigma)}  ) \leq C \delta.
\]
Since $K_{t'u} \in \mathcal{S}_{\kappa_1}$ with $\kappa_1 \leq C$,  Lemma \ref{lemma aux}  implies  $\|\psi_2\|_{C^\alpha(\Sigma)}\leq C\delta  \|\nu_\Sigma\|_{C^{1+s+\alpha}(\Sigma)}$. 
\end{proof}

We need similar estimate  as Proposition \ref{remainder estimate} for the linearization of  the remainder terms  $R_{1,u}$ and  $R_{2,u}$. 

\begin{proposition}
\label{remainder linear}
Assume $u, w \in C^{1 +s+\alpha}(\Sigma)$ and   $\|u\|_{C^{1 + s+\alpha}(\Sigma)}  + \|u \, \nu_\Sigma\|_{C^{1 + s+\alpha}(\Sigma)}  \leq \delta$. Let $R_{1,u + \xi w}$ and $R_{2,u + \xi w}$ be functions defined in  \eqref{R_1} and in  \eqref{R_2} respectively and define
\[
\pa_w R_{1,u}(x) = \frac{d}{d \eta} \Big|_{\eta = 0}  R_{1,u + \eta w}(x) \qquad \text{and} \qquad \pa_w R_{2,u}(x) = \frac{d}{d \eta} \Big|_{\eta = 0}  R_{2,u + \eta w}(x) .
\]
Then for $\delta$ small    it holds
\[
\|\pa_w R_{1,u}\|_{C^{\alpha}(\Sigma)} \leq  C\delta \|w\|_{C^{1 +s+\alpha}(\Sigma)} + C_{\Sigma}\|w\|_{C^{0}(\Sigma)}
\]
and
\[
 \|\pa_w R_{2,u} \|_{C^{\alpha}(\Sigma)} \leq C  \|\nu_\Sigma\|_{C^{1+s+\alpha}(\Sigma)} \|w\|_{C^{1 +s+\alpha}(\Sigma)} +  C_{\Sigma}\|w\|_{C^{0}(\Sigma)}. 
\]
Here $C$ is a uniform constant while $C_{\Sigma}$ depends on $\|\nu_\Sigma \|_{C^{1 +s+\alpha}(\Sigma)}$.
\end{proposition}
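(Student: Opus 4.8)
The plan is to differentiate the representations \eqref{R_1} and \eqref{R_2} of $R_{1,u+\eta w}$ and $R_{2,u+\eta w}$ at $\eta=0$ and to estimate the resulting terms by Lemma \ref{lemma aux}, Lemma \ref{kernel lemma}, the linearization formula \eqref{differentiate K} and the bound \eqref{tarviit}, much as in the proof of Proposition \ref{remainder estimate}. The one genuinely new ingredient is the interpolation inequality of Lemma \ref{aubinlemma}, used at the very end to collect the error terms.

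Write $u_\eta:=u+\eta w$. Differentiating \eqref{R_1} in $\eta$ and applying the Leibniz rule, $\frac{d}{d\eta}\big|_{\eta=0}$ lands on three places: the difference factor $u_\eta(y)-u_\eta(x)$ (producing $w(y)-w(x)$), the argument of $Q_1$, and the kernel $K_{\xi u_\eta}$. For the last two, commuting $\frac{d}{d\eta}\big|_{\eta=0}$ with $\frac{d}{d\xi}$ and using that $(1+Q_1(y,\xi u_\eta,\xi\nabla u_\eta))K_{\xi u_\eta}(y,x)$ reduces at $\xi=0$ to $|y-x|^{-(n+1+s)}$, which is $\eta$-independent since $Q_1(y,0,0)=0$, the inner $\xi$-integral collapses to the endpoint $\xi=t'$; this is what lets us avoid ever differentiating the kernel twice. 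Set $q_i(t',y):=\frac{d}{d\eta}\big|_{\eta=0}Q_i(y,t'u_\eta,t'\nabla u_\eta)$, which is linear in $(w,\nabla w)$ with coefficients that are smooth functions of $(t'u,t'\nabla u)$ and of the geometry of $\Sigma$ (the normal $\nu_\Sigma$ and the second fundamental form), and $k(t',\cdot):=\frac{d}{d\eta}\big|_{\eta=0}K_{t'u_\eta}=\frac{d}{d\zeta}\big|_{\zeta=0}K_{t'u+\zeta(t'w)}$; by \eqref{differentiate K} and Lemma \ref{kernel lemma}(b), $k(t',\cdot)\in\mathcal{S}_\kappa$ with $\kappa\le C\|w\,\nu_\Sigma\|_{C^{1+s+\alpha}(\Sigma)}$. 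With this notation,
\[
\partial_w R_{1,u}(x)=2\int_0^1\!\!\int_0^{t'}\!\!\varphi_\xi(x)\,d\xi\,dt'+2\int_0^1\!\!\int_\Sigma (u(y)-u(x))\big(q_1(t',y)\,K_{t'u}(y,x)+(1+Q_1(y,t'u,t'\nabla u))\,k(t',y,x)\big)\,d\Ha_y^n\,dt',
\]
where $\varphi_\xi$ is the function appearing in \eqref{tarviit monesti} with $v=w$.

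By \eqref{tarviit} the first summand has $C^\alpha(\Sigma)$-norm at most $C\delta\|w\|_{C^{1+s+\alpha}(\Sigma)}$. To each term of the second summand I apply Lemma \ref{lemma aux} with $v_1=u$ (so $\|v_1\|_{C^{1+s+\alpha}}\le\delta$), $v_3\equiv1$, and either $(v_2,K)=(q_1(t',\cdot),K_{t'u})$, with $K_{t'u}\in\mathcal{S}_C$ by Lemma \ref{kernel lemma}(a), or $(v_2,K)=(1+Q_1(\cdot,t'u,t'\nabla u),k(t',\cdot))$, with $\|1+Q_1\|_{C^{s+\alpha}}\le C$ (cf. \eqref{R_1 extra 2}) and $k(t',\cdot)\in\mathcal{S}_{C\|w\,\nu_\Sigma\|_{C^{1+s+\alpha}}}$. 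For $\partial_w R_{2,u}$ the computation is parallel: in the first integral of \eqref{R_2} the factor $|\nu(y)-\nu(x)|^2$ is independent of $u$, so only the $\xi$-dependent part gets differentiated and the same collapse applies, after which $|\nu(y)-\nu(x)|^2=-2\,\nu(x)\cdot(\nu(y)-\nu(x))$ brings the result to the form of Lemma \ref{lemma aux} with $v_1=\nu$ and $v_3=-2\nu$ (componentwise); the second integral of \eqref{R_2} is differentiated directly, giving terms with $v_1=\nu$, $v_3\equiv1$, $(v_2,K)\in\{(q_2(t',\cdot),K_{t'u}),\,(Q_2(\cdot,t'u,t'\nabla u),k(t',\cdot))\}$ and $\|Q_2(\cdot,t'u,t'\nabla u)\|_{C^{s+\alpha}}\le C\delta$. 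The essential difference from the $R_1$ case is that now $v_1=\nu$, whose $C^{1+s+\alpha}$-norm is $\|\nu_\Sigma\|_{C^{1+s+\alpha}}$ and is not small — this is exactly why the bound for $\partial_w R_{2,u}$ carries the factor $\|\nu_\Sigma\|_{C^{1+s+\alpha}}$ in front of $\|w\|_{C^{1+s+\alpha}}$.

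It remains to estimate the Hölder norms of the coefficients. Since $q_i(t',\cdot)$ is linear in $(w,\nabla w)$ with smooth coefficients built from $\nu_\Sigma$ and the second fundamental form, a Leibniz split gives $\|q_i(t',\cdot)\|_{C^{s+\alpha}(\Sigma)}\le C\|w\|_{C^{1+s+\alpha}(\Sigma)}+C_\Sigma\|w\|_{C^1(\Sigma)}$, the uniform constant coming from the $C^0$-norms of the geometric coefficients (finite because $\Sigma$ is uniformly $C^{1,1}$) and $C_\Sigma$ from their $C^{s+\alpha}$-norms, which involve $\|\nu_\Sigma\|_{C^{1+s+\alpha}}$; likewise $\|w\,\nu_\Sigma\|_{C^{1+s+\alpha}(\Sigma)}\le C\|w\|_{C^{1+s+\alpha}(\Sigma)}+C_\Sigma\|w\|_{C^0(\Sigma)}$. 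Feeding these into Lemma \ref{lemma aux} produces, besides the desired leading terms, errors of the shape $\delta\,C_\Sigma\|w\|_{C^1(\Sigma)}$ for $R_{1,u}$ and $\|\nu_\Sigma\|_{C^{1+s+\alpha}}\,C_\Sigma\|w\|_{C^1(\Sigma)}$ for $R_{2,u}$; these are absorbed by interpolation, since Lemma \ref{aubinlemma} gives $\|w\|_{C^1(\Sigma)}\le\|w\|_{C^{1+\alpha}(\Sigma)}\le\delta'\|w\|_{C^{1+s+\alpha}(\Sigma)}+C_{\delta'}\|w\|_{C^0(\Sigma)}$ and one takes $\delta'$ comparable to $1/C_\Sigma$ (resp.\ to $\|\nu_\Sigma\|_{C^{1+s+\alpha}}/C_\Sigma$), so that the coefficient of $\|w\|_{C^{1+s+\alpha}}$ stays of the required order $C\delta$ (resp.\ $C\|\nu_\Sigma\|_{C^{1+s+\alpha}}$) while the remainder is thrown into the $C_\Sigma\|w\|_{C^0}$ term. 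I expect the main difficulty to be exactly this bookkeeping — keeping track, through the many Leibniz expansions, of which constants are uniform and which depend on $\|\nu_\Sigma\|_{C^{1+s+\alpha}}$, since for $R_{1,u}$ only a uniform constant may multiply $\|w\|_{C^{1+s+\alpha}}$; a routine secondary point is to justify differentiation under the principal-value integrals and the interchange of $\frac{d}{d\eta}\big|_{\eta=0}$, $\frac{d}{d\xi}$ and the $t'$- and $\Sigma$-integrals, carried out by the same regularization as in the proofs of Lemma \ref{calculation} and Lemma \ref{kernel lemma}.
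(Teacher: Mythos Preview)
Your proposal is correct and follows essentially the same route as the paper: differentiate \eqref{R_1} in $\eta$, observe that commuting $\frac{d}{d\eta}$ with $\frac{d}{d\xi}$ collapses the inner $\xi$-integral to the endpoint $\xi=t'$ (this is exactly what the paper records in the displayed formula for $\partial_w R_{1,u}$), handle the $w(y)-w(x)$ term by \eqref{tarviit}, and treat the two remaining terms via Lemma~\ref{lemma aux} together with Lemma~\ref{kernel lemma}(a),(b) and the interpolation of Lemma~\ref{aubinlemma}. The only cosmetic difference is that the paper bounds $\|q_i\|_{C^{s+\alpha}}$ directly by $C(\|w\|_{C^{1+s+\alpha}}+\|w\,\nu_\Sigma\|_{C^{1+s+\alpha}})$ and then interpolates $\|w\,\nu_\Sigma\|_{C^{1+s+\alpha}}$, whereas you produce a $C_\Sigma\|w\|_{C^1}$ error and interpolate $\|w\|_{C^1}$; both lead to the same conclusion, and your outline of the $R_2$ case (which the paper omits as ``similar'') is accurate.
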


\begin{proof}
This time we prove the claim only  for $\pa_w R_{1,u}$ since the argument for $\pa_w R_{2,u} $ is similar. We differentiate   $R_{1,u + \eta w}$, defined in \eqref{R_1},   and obtain 
\beq \label{linear long}
\begin{split}
\pa_w &R_{1,u}(x) = \frac{d}{d \eta} \Big|_{\eta = 0}  R_{1,u + \eta w}(x)\\
&=2 \int_0^{1} \int_0^{t'} \int_{\Sigma} (w(y)- w(x)) \frac{d}{d \xi} \big( (1 + Q_1(y, \xi u, \xi \nabla u))K_{\xi u }(y,x)\big)\, d \Ha_y^{n} d \xi dt'\\
&+2  \int_0^{1}  \int_{\Sigma} (u(y)- u(x)) \frac{d}{d \eta} \Big|_{\eta = 0} Q_1(y, t' ( u + \eta w), t' \nabla( u + \eta w))  \, K_{t' u }(y,x) \, d \Ha_y^{n} dt' \\ 
&+ 2  \int_0^{1}  \int_{\Sigma} (u(y)- u(x)) ( 1+ Q_1(y, t' u, t' \nabla u)  ) \left(   \frac{d}{d \eta} \Big|_{\eta = 0} K_{t' u + t'\eta w }(y,x) \right)\, d \Ha_y^{n}  dt'
\end{split}
\eeq
Note that the first term is of type \eqref{tarviit monesti} with $v = w$  and therefore \eqref{tarviit} implies that it is H\"older continuous and its $C^\alpha$-norm is bounded by $C \delta \|w\|_{C^{1+s+\alpha}(\Sigma)}$. Concerning the two last terms in \eqref{linear long}, note first  that   Lemma \ref{kernel lemma} (b) yields  $\frac{d}{d \eta} \Big|_{\eta = 0} K_{t' u + t'\eta w } \in \mathcal{S}_{\kappa_2}$ with $\kappa_2 \leq C \| w \, \nu_\Sigma\|_{C^{1+s+\alpha}(\Sigma)}$. By the interpolation inequality in Lemma \ref{aubinlemma} we may estimate
\[
 \| w \nu_\Sigma\|_{C^{1+s+\alpha}(\Sigma)} \leq C  \|w\|_{C^{1 +s+\alpha}(\Sigma)} +  C_{\Sigma}\|w\|_{C^{0}(\Sigma)}.
\]
 Moreover, we have by \eqref{R_1 extra 2}  
\[
\| 1+ Q_1(y, t' u, t'\nabla u) \|_{C^{s+\alpha}(\Sigma)} \leq C 
\]
and as in  \eqref{R_1 extra 1}  we have
\[
\begin{split}
\| \frac{d}{d \eta} \Big|_{\eta = 0} Q_1(y, t' ( u + \eta w), t' \nabla( u + \eta w)) \|_{C^{s+\alpha}(\Sigma)} &\leq C( \|w \|_{C^{1+s +\alpha}(\Sigma)} + \|w  \nu_\Sigma \|_{C^{1+s +\alpha}(\Sigma)})\\
&\leq C  \|w\|_{C^{1 +s+\alpha}(\Sigma)} +  C_{\Sigma}\|w\|_{C^{0}(\Sigma)}.
\end{split}
\]
Thus we deduce by Lemma \ref{lemma aux} that the two last terms in \eqref{linear long} are H\"older continuous  with $C^\alpha$-norms  bounded by $C \delta \|w\|_{C^{1+s+\alpha}(\Sigma)}  + C_{\Sigma}\|w\|_{C^{0}(\Sigma)}$. Hence, we have 
\[
\|\pa_w R_{1,u}\|_{C^{\alpha}(\Sigma)} \leq  C\delta \|w\|_{C^{1+s+\alpha}(\Sigma)}  + C_{\Sigma}\|w\|_{C^{0}(\Sigma)} .
\]
\end{proof}

At the end of the section we study how to  control the higher order norms of  $R_{1,u}$ and $R_{2,u}$    in order to differentiate  the equation \eqref{the equation}  with respect to $x$. Moreover, even if the fractional Laplacian 
is linear it is not obvious how to bound its higher order covariant derivatives.  Before that we remark on  how we write the derivative  of the function 
\[
\psi(x) = \int_{\Sigma} G(y,x) \, d \Ha_y^n
\] 
with respect to  a  vector field $X \in \mathscr{T}(\Sigma)$. First it holds
\[
 \nabla_X \psi(x) = \int_{\Sigma}  \nabla_{X(x)}   G(y,x) \, d \Ha_y^n,
\]
where $\nabla_{X(x)}  G(y,x)$ denotes the derivative of $G(y,\cdot)$ with respect to $X$.  On the other hand it holds
\[
\Div_{y} ( G(y,x) X(y)) =  \nabla_{X(y)}  G(y,x) + G(y,x)  \Div(X)(y)  .
\]
Therefore we have by the divergence theorem that
\beq
\label{derivointi kaava}
\nabla_X \psi(x) = \int_{\Sigma} (\nabla_{X(y)} +  \nabla_{X(x)})   G(y,x)\, d \Ha_y^n + \int_{\Sigma} G(y,x) \, \text{div}(X)(y) \, d \Ha_y^n ,
\eeq
where
\[
(\nabla_{X(y)} +  \nabla_{X(x)})   G(y,x) = \nabla_{X(y)} G(y,x) +  \nabla_{X(x)}  G(y,x) .
\]

The following proposition  gives us a formula to commute differentiation and the fractional Laplacian. In the following, and in the rest of the paper,  $C_k$ denotes a constant which depends on $k$ and 
on $\Sigma$ in an unspecified way while $C$ denotes a  uniform constant.  
\begin{proposition}
\label{change order}
Let $X_1, \dots, X_k \in \mathscr{T}(\Sigma)$ be  vector fields  such that  $\|X_i\|_{C^{k+2}(\Sigma)} \leq 1$ for $i =1, \dots, k$  and assume $u \in C^\infty(\Sigma)$. Then 
\[
\nabla_{X_k} \cdots \nabla_{X_1}  \big( \Delta^{\frac{1+s}{2}} u \big) = \Delta^{\frac{1+s}{2}}\big( \nabla_{X_k} \cdots \nabla_{X_1} u \big) + \pa^{k+s} u,  
\] 
where $\pa^{k+s} u$ denotes a function which satisfies $\|\pa^{k+s} u\|_{C^\alpha(\Sigma)} \leq C_k \|u\|_{C^{k+s+\alpha}(\Sigma)}$. Moreover,   it holds 
\[
\| \Delta^{\frac{1+s}{2}} u \|_{C^{k+\alpha}(\Sigma)} \leq  C_k \|  u \|_{C^{k+1+s+\alpha}(\Sigma)} 
\]
for every $k \in \N$.
\end{proposition}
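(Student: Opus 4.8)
The plan is to prove the two assertions simultaneously by induction on $k$; the commutator identity is the substance, and the $C^{k+\alpha}$-bound then follows by feeding it back into the $k=0$ estimate. For $k=0$ the identity is empty, while the bound $\|\Delta^{\frac{1+s}{2}}u\|_{C^{\alpha}(\Sigma)}\le C\|u\|_{C^{1+s+\alpha}(\Sigma)}$ is immediate from Lemma \ref{lemma aux} applied to the representation $\Delta^{\frac{1+s}{2}}u(x)=2\int_{\Sigma}(u(y)-u(x))K_0(y,x)\,d\Ha_y^n$, with $v_1=u$ and $v_2=v_3\equiv1$, where $K_0(y,x)=|y-x|^{-n-1-s}$ is the model kernel from \eqref{kernel 1} with $u\equiv0$; by Lemma \ref{kernel lemma}(a) (applied with $u\equiv0$) it lies in $\mathcal{S}_\kappa$ with $\kappa\le C$.

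For the inductive step I would differentiate the representation above along a vector field $X$ using \eqref{derivointi kaava}. Since $(\nabla_{X(y)}+\nabla_{X(x)})(u(y)-u(x))=(Xu)(y)-(Xu)(x)$, the term in which the derivative falls on the difference $u(y)-u(x)$ reconstitutes $\Delta^{\frac{1+s}{2}}(\nabla_X u)$, and the leftover consists of $\int_{\Sigma}(u(y)-u(x))\,\tilde K(y,x)\,d\Ha_y^n$ with $\tilde K=(\nabla_{X(y)}+\nabla_{X(x)})K_0$, together with $\int_{\Sigma}(u(y)-u(x))K_0(y,x)\,\Div X(y)\,d\Ha_y^n$. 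Iterating along $X_1,\dots,X_k$ produces
\[
\nabla_{X_k}\cdots\nabla_{X_1}\bigl(\Delta^{\frac{1+s}{2}}u\bigr)=\Delta^{\frac{1+s}{2}}\bigl(\nabla_{X_k}\cdots\nabla_{X_1}u\bigr)+E_k,
\]
where $E_k$ is a finite sum of terms $\int_{\Sigma}(v_1(y)-v_1(x))\,v_2(y)\,v_3(x)\,K(y,x)\,d\Ha_y^n$ in which $v_1$ is a covariant derivative of $u$ of order at most $k-1$ (so $\|v_1\|_{C^{1+s+\alpha}(\Sigma)}\le C_k\|u\|_{C^{k+s+\alpha}(\Sigma)}$), $v_2,v_3$ are smooth functions built from the components, divergences and covariant derivatives of the $X_i$ (hence bounded in $C^{s+\alpha}(\Sigma)$ and $C^\alpha(\Sigma)$ by $C_k$, using $\|X_i\|_{C^{k+2}(\Sigma)}\le1$), and $K$ is an iterated symmetric derivative $(\nabla_{X_{i_\ell}(y)}+\nabla_{X_{i_\ell}(x)})\cdots(\nabla_{X_{i_1}(y)}+\nabla_{X_{i_1}(x)})K_0$ of the model kernel, with $1\le\ell\le k$. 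As in the proofs of Lemma \ref{calculation} and Lemma \ref{kernel lemma}, these manipulations should first be carried out with the regularized kernel $(|y-x|^2+\eps)^{-(n+1+s)/2}$, letting $\eps\to0$ at the end.

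The main obstacle is to show that every such iterated symmetric derivative $K$ belongs to $\mathcal{S}_\kappa$ with $\kappa\le C_k$. Conditions (i) and (ii) of Definition \ref{S kappa} follow once one notes that each symmetric differentiation inserts a factor of the schematic form $(y-x)\cdot(W(y)-W(x))$ or $(V(y)-V(x))\cdot(W(y)-W(x))$, with $V,W$ smooth vector fields built from the $X_i$; such factors vanish to second order in $|y-x|$ and thus exactly absorb the two extra negative powers of $|y-x|$ produced by differentiating the denominator. Condition (iii) — Hölder continuity of $x\mapsto\int_{\Sigma}(y-x)K(y,x)\,d\Ha_y^n$ — is the delicate point, and is handled exactly as in the proof of Lemma \ref{kernel lemma}(a): writing $K(y,x)=|y-x|^{-n-1-2m-s}\prod_{i=1}^m A_i(y,x)$ with each $A_i$ vanishing to order two, one integrates by parts on $\Sigma$ against $\Div$ (in the spirit of \eqref{long}–\eqref{long 2}) to trade the singular integral for a non-singular one; equivalently, Taylor expanding the $A_i$ and the embedding of $\Sigma$ near $x$, the leading contribution is, in a local parametrization, an odd homogeneous function of the tangential variable whose principal value over symmetric annuli vanishes, while all remainders are absolutely integrable and are covered by Lemma \ref{lemma aux 2}. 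Granting $K\in\mathcal{S}_\kappa$, Lemma \ref{lemma aux} gives $\|E_k\|_{C^\alpha(\Sigma)}\le C_k\|u\|_{C^{k+s+\alpha}(\Sigma)}$, so $E_k$ is precisely the term $\pa^{k+s}u$ in the statement.

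Finally, the bound $\|\Delta^{\frac{1+s}{2}}u\|_{C^{k+\alpha}(\Sigma)}\le C_k\|u\|_{C^{k+1+s+\alpha}(\Sigma)}$ follows by a parallel induction: by \eqref{normi kikka} and \eqref{derivointi kikka} it suffices to bound $\|\nabla_{X_k}\cdots\nabla_{X_1}(\Delta^{\frac{1+s}{2}}u)\|_{C^\alpha(\Sigma)}$ uniformly over vector fields with $\|X_i\|_{C^{k+2}(\Sigma)}\le1$, up to a lower-order term $C_k\|\Delta^{\frac{1+s}{2}}u\|_{C^{k-1+\alpha}(\Sigma)}$ which is controlled by the inductive hypothesis. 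By the commutator identity this quantity is at most $\|\Delta^{\frac{1+s}{2}}(\nabla_{X_k}\cdots\nabla_{X_1}u)\|_{C^\alpha(\Sigma)}+\|E_k\|_{C^\alpha(\Sigma)}$, and the first summand is at most $C\|\nabla_{X_k}\cdots\nabla_{X_1}u\|_{C^{1+s+\alpha}(\Sigma)}\le C_k\|u\|_{C^{k+1+s+\alpha}(\Sigma)}$ by the $k=0$ estimate, while the second is at most $C_k\|u\|_{C^{k+s+\alpha}(\Sigma)}$. This closes the induction.
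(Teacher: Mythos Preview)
Your approach matches the paper's: differentiate the integral representation via \eqref{derivointi kaava}, show that the iterated symmetric derivatives $\partial_j K_0:=(\nabla_{X_j(y)}+\nabla_{X_j(x)})\cdots(\nabla_{X_1(y)}+\nabla_{X_1(x)})K_0$ lie in $\mathcal{S}_{C_j}$, and control the error terms by Lemma \ref{lemma aux}. You correctly flag condition (iii) as the delicate step, but your sketch of it is where you diverge from the paper and leave a gap. You propose to integrate by parts ``in the spirit of \eqref{long}--\eqref{long 2}'' directly against $\partial_k K_0$, but it is not clear how to redo that computation for the more singular pieces $|y-x|^{-n-1-2m-s}$ appearing in $\partial_k K_0$; and your alternative (Taylor expand, exploit odd homogeneity in a local chart) is a genuinely different argument that would need its own justification on a curved $\Sigma$.

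The paper's device is cleaner: rather than derive a new integration-by-parts identity for each $\partial_k K_0$, one differentiates the \emph{already established} identity \eqref{long 22} (i.e.\ \eqref{long} with $u=0$) $k$ times along the $X_i$ using \eqref{derivointi kaava}, obtaining
\[
\int_\Sigma (y-x)\,\partial_k K_0\,d\Ha_y^n \;=\; \int_\Sigma F(y,x)\,\partial_k K_0\,d\Ha_y^n \;+\; \sum_{j=0}^{k-1}\int_\Sigma \tilde F_j(y,x)\,\partial_j K_0\,d\Ha_y^n,
\]
where $F$ is the fixed function from \eqref{long 22} (already known to satisfy the hypotheses of Lemma \ref{lemma aux 2}) and each $\tilde F_j$ has the structure $\sum (v_1(y)-v_1(x))v_2(y)v_3(x)$. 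The first integral on the right is then controlled by Lemma \ref{lemma aux 2} using only conditions (i)--(ii) for $\partial_k K_0$ (which you have), and the remaining integrals by Lemma \ref{lemma aux} together with the \emph{inductive hypothesis} that $\partial_j K_0\in\mathcal{S}_{C_j}$ for $j<k$. This induction on the kernel order $j$ --- not merely on the order of the error $E_k$ --- is the mechanism your sketch is missing. With it in place, the rest of your argument, including the final passage from $X_k\cdots X_1$ to $\nabla_{X_k}\cdots\nabla_{X_1}$ via \eqref{derivointi kikka}, goes through exactly as you describe.
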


\begin{proof}
Let us denote $K(y,x) = |y-x|^{-n-1-s}$ and let $X_1, \dots, X_k \in \mathscr{T}(\Sigma)$ be as in the assumption. We define 
$\pa_{1} K(y,x) := \nabla_{X_1(y)} K(y,x) +\nabla_{X_1(x)}K(y,x)$ and  $\pa_{j} K(y,x)$,  for $2 \leq j \leq k$,  recursively  as
\[
\pa_{j} K(y,x) := \nabla_{X_j(y)} \pa_{j-1} K(y,x) +  \nabla_{X_j(x)}\pa_{j-1} K(y,x) .
\]
 We begin by claiming that $\pa_{k} K(y,x)$ satisfies the conditions (i)-(iii) in Definition \ref{S kappa} with $\kappa \leq C_k$, i.e., $\pa_{k} K(y,x) \in \mathcal{S}_{C_k}$. Note that the constant does not 
depend on the chosen vector fields $X_1, \dots, X_k$ once they  satisfy $\|X_i\|_{C^{k+2}(\Sigma)} \leq 1$. 

It is straightforward to check that $\pa_{k} K(y,x)$ satisfies the conditions (i) and (ii) in the Definition \ref{S kappa} with $\kappa \leq C_k$ for some $C_k$. We need thus to prove the 
condition (iii).  We prove this by induction  and  fix $X  \in \mathscr{T}(\Sigma)$ such that  $\|X\|_{C^{3}(\Sigma)} \leq 1$. We need first to show that the function 
\beq \label{psi vaihto}
\psi_1(x) = \int_{\Sigma} (y-x) \pa_{1} K(y,x) \, d \Ha_y^n 
\eeq
is $\alpha$-H\"older continuous.

The formula \eqref{long} in the case $u = 0$ reads as 
\beq \label{long 22}
\int_{\Sigma} (y-x) K(y,x) \, d \Ha_y^n  = \int_{\Sigma} F(y,x) K(y,x) \, d \Ha_y^n,  
\eeq
where 
\[
F(y,x) = -\frac{H_\Sigma(y) \, \nu(y) }{(n-1+s)} |y-x|^{2} + \big((y-x)\cdot \nu(y)\big) \,  \nu(y). 
\]
We differentiate \eqref{long 22} with respect to $X(x)$ and obtain  by \eqref{derivointi kaava}
\[
\int_{\Sigma} (y-x) \pa_{1} K(y,x) \, d \Ha_y^n  = \underbrace{\int_{\Sigma} F(y,x)\pa_{1} K(y,x) \, d \Ha_y^n}_{=: \tilde{\psi}_1} +   \underbrace{\int_{\Sigma} F_1(y,x) K(y,x) \, d \Ha_y^n}_{=: \varphi_1},
\]
where 
\[
F_1(y,x) =  (\nabla_{X(y)} +  \nabla_{X(x)}) (F(y,x) - (y-x)) + \Div X(y) (F(y,x) - (y-x)). 
\]
First, recall that in the proof of Lemma \ref{kernel lemma} we already verified that $F$ satisfies the  assumptions  of Lemma \ref{lemma aux 2}. Since $\pa_{1} K(y,x) $ satisfies the conditions (i) and (ii) in Definition \ref{S kappa} with  
$\kappa \leq C_1$,  Lemma \ref{lemma aux 2} yields  $\|\tilde{\psi}_1\|_{C^\alpha(\Sigma)} \leq C_1$. Second, we may write $F_1$ as
\[
F_1(y,x) =  \sum_{j=1}^{N_1}  (v_{1,i}(y) - v_{1,i}(x)) v_{2,i}(y) v_{3,i}(x),
\]
where $v_{j,i}$ are such that $\|v_{j,i}\|_{C^2(\Sigma)} \leq C_1$.  Moreover, by Lemma \ref{kernel lemma} it holds $K(y,x) \in \mathcal{S}_\kappa$ with $\kappa \leq C$ and    we may thus use Lemma \ref{lemma aux} to conclude that   $\|\varphi_1\|_{C^\alpha(\Sigma)} \leq C_1$. Hence 
\[
\|\psi\|_{C^\alpha(\Sigma)} \leq C_1
\]
and therefore $\pa_{1} K(y,x) \in \mathcal{S}_\kappa$ with $\kappa \leq C_1$.

We argue by induction  and assume that $\pa_{k-1} K(y,x) \in \mathcal{S}_\kappa$ with $\kappa \leq C_{k-1}$.  Note that this holds for any vector fields $X_1, \dots, X_{k-1}$ with $\|X_i\|_{C^{k+1}(\Sigma)}\leq 1$. Let us  fix  $X_1, \dots, X_k$ as in the assumption.  We differentiate \eqref{long 22} with respect to $X_1, \dots, X_k$ and  obtain by \eqref{derivointi kaava}
\[
\int_{\Sigma} (y-x) \pa_{k} K(y,x) \, d \Ha_y^n  = \underbrace{\int_{\Sigma} F(y,x)\pa_{k} K(y,x) \, d \Ha_y^n}_{=: \tilde{\psi}_k} + \underbrace{\sum_{i = 0}^{k-1}  \int_{\Sigma} \tilde{F}_i(y,x)\pa_{i} K(y,x) \, d \Ha_y^n}_{=:\varphi_k}.
\]
Here $\tilde F_i$ can be written as
\[
\tilde F_i(y,x) = \sum_{j=1}^{N_k}  (v_{1,i}(y) - v_{1,i}(x)) v_{2,i}(y) v_{3,i}(x),
\]
where $v_{j,i}$ are such that $\|v_{j,i}\|_{C^2(\Sigma)} \leq C_k$.  Again we recall  that  $F$ satisfies the  assumptions  of Lemma \ref{lemma aux 2} and  $\pa_{k} K(y,x) $ satisfies the conditions (i) and (ii) in Definition \ref{S kappa}  for $\kappa \leq C_{k}$.  Lemma \ref{lemma aux 2}  then yields  $\|\tilde{\psi}_k\|_{C^\alpha(\Sigma)} \leq C_k$. To prove the H\"older continuity of $\varphi_k$ we recall that by induction assumption  $\pa_{j} K(y,x) \in \mathcal{S}_\kappa$ with $\kappa \leq C_{k-1}$ for 
every $j \leq k-1$. We may thus use Lemma \ref{lemma aux} to deduce $\|\varphi_k\|_{C^\alpha(\Sigma)} \leq C_{k}$. Therefore we conclude  that 
\[
\psi_k(x) =  \int_{\Sigma} (y-x) \pa_{k} K(y,x) \, d \Ha_y^n 
\] 
is H\"older continuous with $\|\psi_k\|_{C^\alpha(\Sigma)} \leq C_k$ and thus $\pa_{k} K(y,x)$ satisfies the condition (iii) in Definition \ref{S kappa} with $\kappa \leq C_k$.

We prove the claim by first choosing $\tilde X_1, \dots, \tilde X_l$, with $1 \leq l \leq k$, such that  $\|\tilde X_i\|_{C^{k+2}(\Sigma)} \leq 1$ for $1 \leq i \leq l$.  Recall  that the function $\tilde X_l \cdots \tilde X_1   u$ is defined 
recursively as $\tilde X_1   u = \nabla_{\tilde X_1} u $ and   $\tilde X_{j+1} \tilde X_j \cdots \tilde X_1   u =  \nabla_{\tilde X_{j+1}} (\tilde X_j \cdots \tilde X_1   u ) $ for $j \geq 1$. We apply \eqref{derivointi kaava} $l$ times  for $\tilde X_1, \dots, \tilde X_l$ and obtain 
\[
\begin{split}
\tilde X_l \cdots \tilde X_1 &\Delta^{\frac{1+s}{2}} u(x)  = 2\int_{\Sigma} (\tilde X_l \cdots \tilde X_1   u(y) - \tilde X_l \cdots \tilde X_1  u(x)) K(y,x)  \, d \Ha_y^n  \\
&+2\int_{\Sigma} (u(y) -u(x)) \pa_l K(y,x)  \, d \Ha_y^n + \sum_{j=0}^{l-1} \int_{\Sigma} (\pa^{l-1-j} u(y) -\pa^{l-1-j} u(x)) v_j(y) \pa_j K(y,x)  \, d \Ha_y^n,
\end{split}
\]
where $\pa^{l-1-j} u$ denotes a function which  satisfies $\|\pa^{l-1-j} u\|_{C^{1+s+\alpha}(\Sigma)} \leq C_{l-1} \| u\|_{C^{l-j+s+\alpha}(\Sigma)} $ and $v_j$ are  such that $\|v_j\|_{C^2(\Sigma)} \leq C_l$. By using  Lemma \ref{lemma aux} and    $\pa_{j} K(y,x) \in \mathcal{S}_{\kappa}$ with $\kappa \leq C_j$  we deduce  
\beq \label{tilde}
\tilde X_l \cdots \tilde X_1 (\Delta^{\frac{1+s}{2}} u) = \Delta^{\frac{1+s}{2}} (\tilde X_l \cdots \tilde X_1 u) + \pa^{l+s} u , 
\eeq
where $ \pa^{l+s} u$ denotes a function which satisfies $\|\pa^{l+s} u\|_{C^\alpha(\Sigma)} \leq C_l \|u\|_{C^{l+s+\alpha}(\Sigma)}$. Hence, we deduce by   \eqref{normi kikka}, Lemma  \ref{lemma aux}  and   \eqref{tilde}  that
\[
\| \Delta^{\frac{1+s}{2}} u \|_{C^{l+\alpha}(\Sigma)} \leq C_l \big(\|  u \|_{C^{l+1+s+\alpha}(\Sigma)} + \| \Delta^{\frac{1+s}{2}} u \|_{C^{l-1+\alpha}(\Sigma)}\big)
\]
for every $l \leq k$.  Note that Lemma \ref{lemma aux} implies $\| \Delta^{\frac{1+s}{2}} u\|_{C^\alpha(\Sigma)} \leq C \|u\|_{C^{1+s+\alpha}(\Sigma)} $. Therefore 
by iterating the above inequality  for $l = 1, \dots, k$ we obtain
\beq \label{tilde 2}
\| \Delta^{\frac{1+s}{2}} u \|_{C^{k+\alpha}(\Sigma)} \leq  C_k \|  u \|_{C^{k+1+s+\alpha}(\Sigma)} .
\eeq
This implies  the second statement. 

Let $X_1, \dots, X_k$ be as in the assumption. We deduce from  \eqref{tilde} that 
\[
 X_k \cdots  X_1 (\Delta^{\frac{1+s}{2}} u) = \Delta^{\frac{1+s}{2}} (X_k \cdots  X_1 u) + \pa^{k+s} u,
\] 
 where  $ \pa^{k+s} u$ denotes a function which satisfies $\|\pa^{k+s} u\|_{C^\alpha(\Sigma)} \leq C_k \|u\|_{C^{k+s+\alpha}(\Sigma)}$. By \eqref{derivointi kikka} 
we have
\[
 \nabla_{X_k} \cdots  \nabla_{X_1} (\Delta^{\frac{1+s}{2}} u) = \Delta^{\frac{1+s}{2}} ( \nabla_{X_k} \cdots  \nabla_{X_1} u)  + \pa^{k-1}(\Delta^{\frac{1+s}{2}} u) + \pa^{k+s} u,
\]
where $\pa^{k-1}(\Delta^{\frac{1+s}{2}} u) $ denotes a function which satisfies 
\[
\| \pa^{k-1}(\Delta^{\frac{1+s}{2}} u)\|_{C^\alpha(\Sigma)} \leq C_k \| \Delta^{\frac{1+s}{2}} u\|_{C^{k-1+\alpha}(\Sigma)}.
\] 
The estimate \eqref{tilde 2} applied to  $(k-1)$ yields
\[
\| \pa^{k-1}(\Delta^{\frac{1+s}{2}} u)\|_{C^\alpha(\Sigma)} \leq C_k \|  u \|_{C^{k+s+\alpha}(\Sigma)} 
\]
and the claim follows.
\end{proof}

Similar result holds for the remainder terms $R_1$ and $R_2$ . 
\begin{lemma}
\label{remainder derivative}
Assume $u \in C^{\infty}(\Sigma)$ with $\|u\|_{C^{1 + s+\alpha}(\Sigma)} + \|u \, \nu_\Sigma\|_{C^{1 + s+\alpha}(\Sigma)} \leq \delta$ and let $R_{1,u}$ and $R_{2,u}$ be the functions defined in  \eqref{R_1} and in  \eqref{R_2} respectively.  
Let $X_1, \dots, X_k \in \mathscr{T}(\Sigma)$ be  vector fields  with $\|X_i\|_{C^{k+2}(\Sigma)} \leq 1$ for $i =1, \dots, k$ .  There is a  constant $C_k$, which depends on $k$ and $\Sigma$, such that  for $\delta>0$ small enough  it holds 
\[
\|\nabla_{X_k} \cdots \nabla_{X_1} R_{1,u}\|_{C^{\alpha}(\Sigma)} \leq C \delta  \| \nabla_{X_k} \cdots \nabla_{X_1} u\|_{C^{1+ s+\alpha}(\Sigma)} + C_{k}(1+ \|u\|_{C^{k+s+\alpha}(\Sigma)}^{k}) 
\]
and 
\[
\|\nabla_{X_k} \cdots \nabla_{X_1} R_{2,u} \|_{C^{\alpha}(\Sigma)} \leq  C \|\nu\|_{C^{1+s+\alpha}(\Sigma)} \|\nabla_{X_k} \cdots \nabla_{X_1} u \|_{C^{1+ s+\alpha}(\Sigma)} + C_{k}(1+ \|u\|_{C^{k+s+\alpha}(\Sigma)}^{k})   . 
\]
In particular, it holds
\[
\|R_{1,u} \|_{C^{k+\alpha}(\Sigma)}+  \|R_{2,u} \|_{C^{k+\alpha}(\Sigma)}  \leq C_{k}(1+ \|u\|_{C^{k+1+s+\alpha}(\Sigma)}^{k})
\]
for every $k \in \N$.
\end{lemma}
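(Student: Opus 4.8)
The plan is to differentiate $R_{1,u}$ and $R_{2,u}$ under the integral sign, applying the integration-by-parts formula \eqref{derivointi kaava} $k$ times with the vector fields $X_1,\dots,X_k$, and then to recognise the resulting terms as being of the type estimated by Lemma \ref{lemma aux}. Consider first $R_{1,u}$, whose integrand (ignoring the harmless outer $\xi$- and $t'$-integrations) is $(u(y)-u(x))\,\frac{d}{d\xi}\big((1+Q_1(y,\xi u,\xi\nabla u))K_{\xi u}(y,x)\big)$. Each application of \eqref{derivointi kaava} replaces $\nabla_X$ of the integral by the integral of the symmetric derivative $(\nabla_{X(y)}+\nabla_{X(x)})$ of the integrand, plus a term carrying a factor $\Div X(y)$ (whose $C^{k+\alpha}$-norm is bounded since $\|X\|_{C^{k+2}(\Sigma)}\leq1$). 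Iterating $k$ times and using the Leibniz rule distributes the derivatives among the three factors $(u(y)-u(x))$, $1+Q_1(y,\xi u,\xi\nabla u)$ and $K_{\xi u}(y,x)$; up to lower-order corrections from \eqref{vaihto kikka}--\eqref{derivointi kikka} (which are harmless), every resulting term has the form
\[
\big(\tilde v_1(y)-\tilde v_1(x)\big)\,\tilde v_2(y)\,\tilde v_3(x)\,\tilde K(y,x),
\]
where $\tilde K$ is an iterated symmetric derivative of $K_{\xi u}$ or of $\frac{d}{d\xi}K_{\xi u}$, and $\tilde v_1,\tilde v_2,\tilde v_3$ are built out of covariant derivatives of $u$, of $\nu_\Sigma$, and of the smooth functions $Q_i$.

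To make Lemma \ref{lemma aux} applicable I would first extend Lemma \ref{kernel lemma}: differentiating the identity \eqref{long} $m$ times by means of \eqref{derivointi kaava} — exactly as \eqref{long 22} is differentiated in the proof of Proposition \ref{change order} — and arguing inductively with Lemma \ref{lemma aux}, one gets that the $m$-fold symmetric derivative of $K_{\xi u}$ lies in $\mathcal{S}_\kappa$ with $\kappa$ bounded in terms of $m$, $\Sigma$ and $\|u\|_{C^{m+1+s+\alpha}(\Sigma)}$, and that the $m$-fold symmetric derivative of $\frac{d}{d\xi}K_{\xi u}$ does too, with the extra feature that for $m=0$ one has $\kappa\leq C\delta$. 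Now in the Leibniz expansion the single term in which all $k$ derivatives act on $(u(y)-u(x))$ produces, by \eqref{derivointi kikka}, the factor $\big(\nabla_{X_k}\cdots\nabla_{X_1}u(y)-\nabla_{X_k}\cdots\nabla_{X_1}u(x)\big)$ multiplied by the undifferentiated $\frac{d}{d\xi}\big((1+Q_1)K_{\xi u}\big)$; splitting the latter as in the proof of Proposition \ref{remainder estimate} ($\frac{d}{d\xi}Q_1$ has small $C^{s+\alpha}$-norm and $K_{\xi u}\in\mathcal{S}_{C}$, while $1+Q_1$ has bounded $C^{s+\alpha}$-norm and $\frac{d}{d\xi}K_{\xi u}\in\mathcal{S}_{C\delta}$) and applying Lemma \ref{lemma aux} yields exactly the uniform contribution $C\delta\,\|\nabla_{X_k}\cdots\nabla_{X_1}u\|_{C^{1+s+\alpha}(\Sigma)}$. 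Every other term has at least one derivative on $1+Q_1$ or on the kernel, hence involves only covariant derivatives of $u$ of order $\leq k$, entering through a polynomial of degree $\leq k$ (using the $C^{0}$-smallness of $u$ and $\nabla u$ to keep the degree at most $k$); Lemma \ref{lemma aux} then bounds each such term by $C_k(1+\|u\|_{C^{k+s+\alpha}(\Sigma)}^k)$. This proves the estimate for $\nabla_{X_k}\cdots\nabla_{X_1}R_{1,u}$. The estimate for $R_{2,u}$ is obtained in the same way, replacing $(u(y)-u(x))$ by $|\nu(y)-\nu(x)|^2=-2\,\nu(x)\cdot(\nu(y)-\nu(x))$ in the first block of terms and by $Q_2(y,t'u,t'\nabla u)$ in the second; the only structural difference is that the top-order derivative of $u$ now appears through the $u$-dependence of the kernel and of $Q_1,Q_2$ and is therefore accompanied by $\|\nu_\Sigma\|_{C^{1+s+\alpha}(\Sigma)}$ instead of $\delta$.

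Finally, the last assertion follows by summing the first two estimates over $1\leq l\leq k$ with arbitrary $X_1,\dots,X_l$ subject to $\|X_i\|_{C^{k+2}(\Sigma)}\leq1$, invoking the norm equivalence \eqref{normi kikka} together with $\|\nabla_{X_l}\cdots\nabla_{X_1}u\|_{C^{1+s+\alpha}(\Sigma)}\leq C_l\|u\|_{C^{l+1+s+\alpha}(\Sigma)}$, and absorbing the degree-$\leq k$ polynomials of $\|u\|_{C^{k+1+s+\alpha}(\Sigma)}$ into $C_k(1+\|u\|_{C^{k+1+s+\alpha}(\Sigma)}^k)$ (here one uses $t\leq1+t^k$ for $t\geq0$). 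I expect the main obstacle to be purely organisational: controlling the Leibniz expansion finely enough to isolate the single top-order term, which must retain the \emph{uniform} constant $C\delta$ (respectively $C\|\nu_\Sigma\|_{C^{1+s+\alpha}(\Sigma)}$), from the large number of lower-order terms, and checking that all iterated symmetric derivatives of the kernels $K_{\xi u}$ remain in the classes $\mathcal{S}_\kappa$ with the correct dependence of $\kappa$ on $\delta$, on $\Sigma$ and on $u$; the non-commutativity of covariant derivatives enters only through the innocuous $\pa^{l-1}$-type remainders of \eqref{vaihto kikka}--\eqref{derivointi kikka}.
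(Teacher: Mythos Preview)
Your overall strategy coincides with the paper's: apply \eqref{derivointi kaava} iteratively, distribute via Leibniz, show that iterated symmetric derivatives of $K_{\xi u}$ remain in suitable $\mathcal{S}_\kappa$ classes, and invoke Lemma~\ref{lemma aux}. There is, however, a bookkeeping error that matters.

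You claim that ``every other term has at least one derivative on $1+Q_1$ or on the kernel, hence involves only covariant derivatives of $u$ of order $\leq k$'' and is therefore bounded by $C_k(1+\|u\|_{C^{k+s+\alpha}}^k)$. This is false for the two terms in which \emph{all} $k$ derivatives land on $Q_1(y,\xi u,\xi\nabla u)$ or on the kernel $K_{\xi u}$: since $Q_1$ depends on $\nabla u$ and $K_{\xi u}$ on $u$, these terms carry $\nabla^{k+1}u$ (respectively a kernel whose $\mathcal{S}_\kappa$-constant involves $\|u\|_{C^{k+1+s+\alpha}}$), i.e.\ top order. What saves them is that the factor $(u(y)-u(x))$ is then \emph{undifferentiated}, so Lemma~\ref{lemma aux} supplies a prefactor $\|u\|_{C^{1+s+\alpha}}\leq\delta$; hence they still contribute $C\delta\,\|\nabla_{X_k}\cdots\nabla_{X_1}u\|_{C^{1+s+\alpha}}+C_k(\dots)$. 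The paper isolates these as separate terms $\varphi_2,\varphi_3$ (after first integrating in $\xi$, writing $\int_0^{t'}\tfrac{d}{d\xi}[(1+Q_1)K_{\xi u}]\,d\xi=(1+Q_1(t'))K_{t'u}-K$, which makes the split cleaner). Relatedly, your kernel extension (``$\kappa$ bounded in terms of $m,\Sigma,\|u\|_{C^{m+1+s+\alpha}}$'') is not sharp enough: one needs the precise form \eqref{paha}, namely $\kappa_m\leq C\,\|X_m\cdots X_1 u\|_{C^{1+s+\alpha}}+C_m(1+\|u\|_{C^{m+s+\alpha}}^m)$ with a \emph{uniform} $C$ on the top-order part. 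Without it the coefficient of $\|\nabla_{X_k}\cdots\nabla_{X_1}u\|_{C^{1+s+\alpha}}$ becomes $C_k\delta$ rather than $C\delta$, and since $\delta$ is fixed independently of $k$ in Step~3 of Theorem~\ref{main thm num}, this distinction is essential for the application.
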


\begin{proof}
Since the proof is similar to the proof of Proposition \ref{change order} we only sketch it. In addition we only prove the claim for $R_{1,u}$ as the estimate for $R_{2,u}$  follows from a similar argument. 
Let   $u \in C^{\infty}(\Sigma)$  be as in the assumption and let $K_u(y,x)$ be as defined  in \eqref{kernel 1}. As in the proof of the previous proposition we  define $\pa_{j} K_u(y,x)$, for $1 \leq j \leq  k$, by 
$\pa_{1} K_u(y,x) = \nabla_{X_1(y)} K_u(y,x) +\nabla_{X_1(x)}K_u(y,x)$ and  for $j \geq 2$  recursively as 
\[
\pa_{j} K_u(y,x) := \nabla_{X_j(y)} \pa_{j-1} K_u(y,x) +  \nabla_{X_j(x)}\pa_{j-1} K_u(y,x) . 
\]
We claim that   $\pa_{k} K_u(y,x)$ satisfies the conditions (i)-(iii) in Definition \ref{S kappa} with 
\beq \label{paha}
\kappa_k \leq   C    \| X_k \cdots X_1  u \|_{C^{1+ s+\alpha}(\Sigma)}  + C_{k}(1+ \|u\|_{C^{k+s+\alpha}(\Sigma)}^{k}) 
\eeq
for some $C_k$ and $C$, where the latter  is independent of $k$. Moreover, the constants in  \eqref{paha} do not depend on the chosen vector 
fields $X_1, \dots, X_k$. The argument for \eqref{paha} is similar to the one in the beginning of Proposition \ref{change order} and thus we omit it.


To prove the claim we recall the definition of $R_{1,u}$ in \eqref{R_1}. As in  Proposition \ref{change order} we first choose  $\tilde X_1, \dots, \tilde X_l$, with $1 \leq l \leq k$, such that  $\|\tilde X_i\|_{C^{l+2}(\Sigma)} \leq 1$ for $1 \leq i \leq l$.  
We apply \eqref{derivointi kaava}  $l$ times  for $\tilde X_1, \dots, \tilde X_l$ and obtain
\beq \label{tosi long} 
\begin{split}
&\tilde X_l \cdots \tilde X_1 R_{1,u}(x) \\
&= 2 \int_0^{1} \int_0^{t'} \int_{\Sigma} (\tilde X_l \cdots \tilde X_1  u(y)- \tilde X_l \cdots \tilde X_1 u(x)) \frac{d}{d \xi} \big( (1 + Q_1(y, \xi u, \xi \nabla u) )K_{\xi u}(y,x)\big)\, d \Ha_y^{n} d \xi dt'\\
&+2 \int_0^{1} \int_{\Sigma}  ( u(y)- u(x))   (\tilde X_l \cdots \tilde X_1 \, Q_1(y,t' u,  t' \nabla u))   K_{t' u}(y,x)\, d \Ha_y^{n}  dt' \\
&+2 \int_0^{1}  \int_{\Sigma} ( u(y)- u(x))  \big((1 + Q_1(y, t' u, t' \nabla u))  \pa_{l} K_{t' u}(y,x) -   \pa_{l} K(y,x) \big) \, d \Ha_y^{n} dt' \\
&+ \sum_{\substack{ i , j, m  \leq l-1 \\ i+j+m =l}} \int_0^{1} \int_{\Sigma} (\pa^i u(y)- \pa^i u(x)) \, \pa^{j}  Q_1(y, t' u,  t' \nabla u) \,  v_j(y) \, \big( \pa_{m} K_{t' u}(y,x) -   \pa_{m} K(y,x) \big)\, d \Ha_y^{n}  dt'\\
&= \varphi_1+ \varphi_2+ \varphi_3 + f
\end{split}
\eeq
where $\pa^j w$ denotes a function which  satisfies $\|\pa^j w\|_{C^{1+s+\alpha}(\Sigma)} \leq C \| w\|_{C^{j+1+s+\alpha}(\Sigma)}$ and $v_j$ are  such that $\|v_j\|_{C^2(\Sigma)} \leq C_l$.
 Here  $\varphi_1$ denotes the fuction on the first row in \eqref{tosi long}, $\varphi_2$  the function on the second row etc. The function $\varphi_1$ is of type \eqref{tarviit monesti}, with $v = \tilde X_l \cdots \tilde X_1  u$ and therefore \eqref{tarviit} implies
\[
\| \varphi_1 \|_{C^\alpha(\Sigma)} \leq C \delta \|\tilde X_l \cdots \tilde X_1   u\|_{C^{1+s+\alpha}(\Sigma)} .
\]
On the other hand Lemma \ref{lemma aux}, the assumption $ \| u\|_{C^{1+s+\alpha}(\Sigma)} \leq \delta$, \eqref{vaihto kikka} and \eqref{derivointi kikka} imply   
\[
\begin{split}
\| \varphi_2 \|_{C^\alpha(\Sigma)} &\leq C \| u\|_{C^{1+s+\alpha}(\Sigma)}  \|\tilde X_l \cdots \tilde X_1 Q_1(x,t' u,  t' \nabla u)\|_{C^{s+\alpha}(\Sigma)} \\
&\leq C \delta  \|\tilde X_l \cdots \tilde X_1  u\|_{C^{1+s+\alpha}(\Sigma)} + C_l(1+ \|u\|_{C^{l+s+\alpha}(\Sigma)}^l).
\end{split}
\]
Since $ \pa_{l} K_{t' u}(y,x)$ belongs to the class $\mathcal{S}_{\kappa_l}$, with $\kappa_l$ given by \eqref{paha}, we conclude by Lemma \ref{lemma aux}  that 
\[
\| \varphi_3 \|_{C^\alpha(\Sigma)} \leq C \delta \|\tilde X_l \cdots \tilde X_1   u\|_{C^{1+s+\alpha}(\Sigma)} + C_l(1+ \|u\|_{C^{l+s+\alpha}(\Sigma)}^{l}).
\]
Similarly we deduce that $\|f\|_{C^\alpha(\Sigma)} \leq C_l(1+ \|u\|_{C^{l+s+\alpha}(\Sigma)}^{l})$. Combining the previous inequalities with \eqref{tosi long}   yields
\beq \label{sama kuin}
\|\tilde X_l \cdots \tilde X_1 R_{1,u}\|_{C^{\alpha}(\Sigma)} \leq C \delta  \| \tilde X_l \cdots \tilde X_1 u\|_{C^{1+ s+\alpha}(\Sigma)} + C_l(1+ \|u\|_{C^{l+s+\alpha}(\Sigma)}^{l}) 
\eeq

 We deduce by \eqref{normi kikka} and   \eqref{sama kuin}  that 
\[
\| R_{1,u}\|_{C^{l+\alpha}(\Sigma)} \leq C_l \big(  1 + \|  u \|_{C^{l+1+s+\alpha}(\Sigma)}^{l} + \|  R_{1,u} \|_{C^{l-1+\alpha}(\Sigma)}\big)
\]
for every $l \leq k$. Recall that  by  Proposition  \ref{remainder estimate} we have  $\| R_{1,u} u\|_{C^\alpha(\Sigma)} \leq C  \delta \|u\|_{C^{1+s+\alpha}(\Sigma)}$.  Therefore 
by using the above inequality $k$ times   for $l = 1, \dots, k$ we obtain 
\beq \label{sama 2}
\| R_{1,u}  \|_{C^{k+\alpha}(\Sigma)} \leq  C_k(1 +   \|  u \|_{C^{k+1+s+\alpha}(\Sigma)}^k) .
\eeq
This proves the second claim.
 
Let $X_1, \dots, X_k$ be as in the assumption. We deduce from  \eqref{sama kuin}  that 
\[
\| X_k \cdots  X_1 R_{1,u} \|_{C^{\alpha}(\Sigma)}  \leq  C \delta  \|  X_k \cdots  X_1 u \|_{C^{1+s+\alpha}(\Sigma)}+ C_k(1+ \|u\|_{C^{k+s+\alpha}(\Sigma)}^{k}).
\] 
Then  \eqref{derivointi kikka}    implies 
\[
\| \nabla_{X_k} \cdots  \nabla_{X_1} R_{1,u}\|_{C^{\alpha}(\Sigma)}  \leq  C \delta  \|  \nabla_{X_k} \cdots  \nabla_{X_1} u \|_{C^{1+s+\alpha}(\Sigma)}+ C_k(1+ \|u\|_{C^{k+s+\alpha}(\Sigma)}^{k} + \| R_{1,u}  \|_{C^{k-1+\alpha}(\Sigma)}).
\]
The claim follows from \eqref{sama 2} with  $(k-1)$.

\end{proof}


\section{Proof of the Main Theorem}

We will first prove the main theorem for the flow \eqref{flow} and explain at the end of the section how the proof can be applied to deal with the volume preserving case \eqref{flow vol}. 
By Proposition \ref{final equation} we need to prove that the equation \eqref{the equation} has a unique solution $h \in C(\Sigma \times [0,T]) \cap C^{\infty}(\Sigma \times (0,T])$ with $h(x,0) = h_0(x)$ for $x \in \Sigma$.

Suppose that $\delta$ is small such  that the results in Section 4 hold. Recall that by the discussion at the beginning of Section 3  we may choose $\Sigma$ in such a way that we have 
\beq \label{obvious2}
\|h_0\|_{C^0(\Sigma)} < \eps/2, \quad \|h_0\|_{C^{1+s+\alpha}(\Sigma)}  < (2C)^{-1}\delta \quad \text{and} \quad \|\nu_\Sigma \|_{C^{1+s+\alpha}(\Sigma)} \leq C \eps^{-s-\alpha},
\eeq
where $\eps \in (0, \delta)$ and $C$ will be chosen later. Here is the statement of the main theorem for \eqref{flow}. 

\begin{theorem}[\textbf{Main Theorem}]
\label{main thm num}
Let $0 < \alpha < (1-s)/2$. Assume  $\Sigma \subset \R^{n+1}$ is a smooth compact hypersurface and $h_0 : \Sigma \to \R$ is such that  \eqref{obvious2} holds. 
For $\delta$ and $\eps$  small enough,  there is $T \in (0,1)$, depending on $\delta$ and $\eps$, such that the equation \eqref{the equation} has a unique classical   solution  $h \in C(\Sigma \times [0,T]) \cap C^{\infty}(\Sigma \times (0,T])$ with   initial value 
 $h(x,0) = h_0(x)$ for all $x \in \Sigma$. Moreover, it holds 
\[
\sup_{0< t < T}\|h(\cdot,t)\|_{C^{1+s+\alpha}(\Sigma)} \leq  \delta  
\] 
and for every $k\in \N$  there is a constant $\Lambda_k$ such that 
\[
\sup_{0 <  t  < T} \big( t^{k!} \| h(\cdot,t)\|_{C^{k}(\Sigma)}\big) \leq \Lambda_k.
\]
\end{theorem}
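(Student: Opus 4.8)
The plan is to realize \eqref{the equation} as a small perturbation of the fractional heat equation and apply the Schauder estimate of Theorem~\ref{parabolic est} together with a fixed-point argument, and then to bootstrap. Fix $\delta>0$ small enough for the results of Section~4 to hold, then $\eps\in(0,\delta)$ small, then $T\in(0,1)$ small; by the discussion preceding the theorem we may take $\Sigma$ and $h_0$ so that \eqref{obvious2} holds, in particular $\|\nu_\Sigma\|_{C^{1+s+\alpha}(\Sigma)}\le C\eps^{-s-\alpha}$. Set
\[
\mathcal{X}_T:=\Big\{v\in C(\Sigma\times[0,T]): v(\cdot,0)=h_0,\ \sup_{[0,T]}\|v(\cdot,t)\|_{C^{1+s+\alpha}(\Sigma)}\le\delta,\ \sup_{[0,T]}\|v(\cdot,t)\|_{C^{0}(\Sigma)}\le 2\eps\Big\},
\]
which is nonempty (it contains $v\equiv h_0$), convex, and closed under uniform convergence, since the $C^{1+s+\alpha}$-bound is lower semicontinuous with respect to $C^0$-convergence of functions uniformly bounded in $C^{1+s+\alpha}$. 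For $v\in\mathcal{X}_T$ let $\mathcal Tv:=h$ be the solution of $\pa_t h=\Delta^{\frac{1+s}{2}}h+P(\cdot,v,\nabla v)$, $h(\cdot,0)=h_0$, where $P(\cdot,v,\nabla v)=\pa_t h-\Delta^{\frac{1+s}{2}}h$ is read off from \eqref{the equation} with $h$ replaced by $v$ outside the leading term, i.e.
\[
P(\cdot,v,\nabla v)=Q(\cdot,v,\nabla v)\big(\Delta^{\frac{1+s}{2}}v-H_\Sigma^s\big)+\big(1+Q(\cdot,v,\nabla v)\big)\big(c_s^2\,v+R_{1,v(\cdot,t)}+R_{2,v(\cdot,t)}\,v\big)-H_\Sigma^s;
\]
the solution is provided by Theorem~\ref{parabolic est} (extended to merely $C^\alpha$-in-space data by mollification and passage to the limit), with the smooth bounded term $-H_\Sigma^s$ assigned to the slot ``$g$'' (so that it enters only through the favourable factor $T$) and the remaining $t$-dependent part to ``$f$''. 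By Proposition~\ref{final equation} a fixed point of $\mathcal T$ is exactly the sought solution of \eqref{flow}.

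The first task is to show $\mathcal T(\mathcal X_T)\subseteq\mathcal X_T$. The $C^0$-bound of $h=\mathcal Tv$ follows from the maximum-principle estimate of Theorem~\ref{parabolic est} once $T$ is small. For the $C^{1+s+\alpha}$-bound, Theorem~\ref{parabolic est} reduces everything to making $\sup_t\|f\|_{C^\alpha}$ small compared with $\delta$, which is done termwise: $\|R_{1,v}\|_{C^\alpha}\le C\delta\|v\|_{C^{1+s+\alpha}}$ and $\|R_{2,v}\|_{C^\alpha}\le C\delta\|\nu_\Sigma\|_{C^{1+s+\alpha}}$ by Proposition~\ref{remainder estimate}; $\|Q(\cdot,v,\nabla v)\|_{C^\alpha}$ and $\|c_s^2 v\|_{C^\alpha}$ are made small via the interpolation Lemma~\ref{aubinlemma}, $Q(x,0,0)=0$ and $\|v\|_{C^0}\le2\eps$; the remaining factors $\Delta^{\frac{1+s}{2}}v\in C^\alpha$ and $H_\Sigma^s\in C^\alpha$ are merely bounded (the latter uniformly, as $\Sigma$ has bounded $C^{1,1}$-norm). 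I expect this to be the main obstacle of the proof: because $\pa E_0$ is only $C^{1,1}$, constants such as $\|\nu_\Sigma\|_{C^{1+s+\alpha}}\le C\eps^{-s-\alpha}$ and $\|B_\Sigma\|_{C^\alpha}\le C\eps^{-\alpha}$ blow up as $\eps\to0$, so the perturbative gains (factors $C\delta$, or powers of $\eps$ surviving from $\|v\|_{C^0}\le2\eps$ after interpolation) must dominate them; chasing the various terms leads to a hierarchy of the form $\eps^{\,1-\alpha}\lesssim\delta\lesssim\eps^{\,s+\alpha}$ together with $\eps<\delta$, which is non-vacuous precisely because $\alpha<(1-s)/2$ -- this is the quantitative content of \eqref{obvious}--\eqref{product 3}. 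With $\delta,\eps,T$ chosen accordingly, and using $\|h_0\|_{C^{1+s+\alpha}}<(2C)^{-1}\delta$ for the initial-data term, Theorem~\ref{parabolic est} yields $\sup_t\|h(\cdot,t)\|_{C^{1+s+\alpha}}\le\delta$.

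To produce the fixed point I would invoke the Schauder--Tychonoff theorem rather than Banach's principle, since the natural contraction estimate is spoiled by the non-uniform $C_\Sigma\|w\|_{C^0}$-term of Proposition~\ref{remainder linear}. From $\pa_t h=\Delta^{\frac{1+s}{2}}h+P(\cdot,v,\nabla v)$ one gets $\sup_t\|\pa_t h\|_{C^0}\le C$, so $\mathcal T(\mathcal X_T)$ is equi-Lipschitz in $t$ and (from the $C^1$-bound) equicontinuous in $x$, hence precompact in $C(\Sigma\times[0,T])$; moreover $\mathcal T$ is continuous for the $C^0$-topology on $\mathcal X_T$, because $C^0$-convergence $v_n\to v$ in $\mathcal X_T$ upgrades, through the uniform $C^{1+s+\alpha}$-bound and compact embedding, to $C^{1+s+\alpha/2}$-convergence, whence $P(\cdot,v_n,\nabla v_n)\to P(\cdot,v,\nabla v)$ uniformly and $\mathcal Tv_n\to\mathcal Tv$ by the $C^0$-estimate of Theorem~\ref{parabolic est}. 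Applying Schauder--Tychonoff on $\overline{\mathrm{conv}}\,\mathcal T(\mathcal X_T)\subseteq\mathcal X_T$ gives a fixed point $h\in\mathcal X_T$, which by Proposition~\ref{final equation} yields a solution of \eqref{flow} starting from $E_0$ with $\sup_t\|h(\cdot,t)\|_{C^{1+s+\alpha}}\le\delta$. Uniqueness I would prove directly: the difference $w$ of two solutions satisfies the fractional heat equation with zero initial datum, and, bounding its forcing by Proposition~\ref{remainder linear}, one absorbs the small $C\delta\|w\|_{C^{1+s+\alpha}}$-contribution in the $C^{1+s+\alpha}$-Schauder estimate, then feeds the resulting bound into the $C^0$-estimate, whose $T$-factor forces $w\equiv0$ first on a short interval and then, by continuation, on $[0,T]$.

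It remains to establish the instantaneous smoothness together with $\sup_{0<t<T}\big(t^{k!}\|h(\cdot,t)\|_{C^{k}(\Sigma)}\big)\le\Lambda_k$; I would argue by induction on $k$. Differentiating \eqref{the equation} along vector fields $X_1,\dots,X_k$ and using Proposition~\ref{change order} to commute $\Delta^{\frac{1+s}{2}}$ past the derivatives (with commutator $\pa^{k+s}h$ controlled by $\|h\|_{C^{k+s+\alpha}}$) and Lemma~\ref{remainder derivative} for the derivatives of $R_{1,h}$ and $R_{2,h}$, one finds that $\nabla_{X_k}\cdots\nabla_{X_1}h$ again solves a small perturbation of the fractional heat equation whose forcing has $C^\alpha$-norm at most $C\delta\,\|\nabla_{X_k}\cdots\nabla_{X_1}h\|_{C^{1+s+\alpha}}+C_k\big(1+\|h(\cdot,t)\|_{C^{k+s+\alpha}}^{\,k}\big)$, the decisive feature being that the top-order part carries the small coefficient $\delta$ and is absorbed. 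Inserting the inductive bound $\|h(\cdot,t)\|_{C^{k+s+\alpha}}\le\Lambda_{k-1}'\,t^{-a_{k-1}}$ and using the smoothing form of the parabolic estimate on $[t/2,t]$ (each gained derivative costing a fixed power of $t$) gives $\|h(\cdot,t)\|_{C^{k+1+s+\alpha}}\le\Lambda_k'\,t^{-a_k}$ with $a_k\le a_{k-1}+c$; since $a_k$ grows at most linearly it is crudely bounded by $k!$, and differentiating the equation in $t$ as well yields $h\in C^\infty(\Sigma\times(0,T])$, hence smoothness of each $\pa E_t$ with $t\in(0,T]$. This completes the proof.
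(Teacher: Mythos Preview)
Your overall strategy matches the paper's, but two points deserve comment. First, the paper does use Banach's contraction principle, not Schauder--Tychonoff. Your claim that the contraction is ``spoiled'' by the $C_\Sigma\|w\|_{C^0}$-term of Proposition~\ref{remainder linear} is not correct: the paper absorbs that term by working with the weighted norm $\|u\|_X:=\sup_t(\|u\|_{C^{1+s+\alpha}}+\Lambda_0\|u\|_{C^0})$, choosing $\Lambda_0\ge\delta^{-1}C_{\Sigma,\delta}$ and then $T\le\Lambda_0^{-1}$. The $C^0$-Schauder estimate in Theorem~\ref{parabolic est} carries a factor $T$, so the bad constant $C_{\Sigma,\delta}$ is killed on the $C^0$-side while on the $C^{1+s+\alpha}$-side it is traded for $\Lambda_0$ and then absorbed by $\delta$. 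This gives existence and uniqueness in one stroke and avoids the continuity/compactness verifications your Schauder--Tychonoff route needs (in particular your continuity step requires $P(\cdot,v_n,\nabla v_n)\to P(\cdot,v,\nabla v)$ from mere $C^{1+s+\alpha/2}$-convergence of $v_n$, which Proposition~\ref{remainder linear} as stated does not quite give).

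Second, your bootstrap has an accounting error. You correctly record that the forcing for $\nabla_{X_k}\!\cdots\!\nabla_{X_1}h$ contains $C_k(1+\|h(\cdot,t)\|_{C^{k+s+\alpha}}^{\,k})$ from Lemma~\ref{remainder derivative}, but then assert $a_k\le a_{k-1}+c$. Inserting the inductive bound $\|h(\cdot,t)\|_{C^{k+s+\alpha}}\le\Lambda\,t^{-a}$ into that $k$-th power yields $t^{-ka}$, so the recursion is $a_{k+1}\ge k\,a_k$, i.e.\ factorial growth; this is exactly why the theorem carries $t^{k!}$ rather than $t^{k}$. The paper does not use a ``smoothing estimate on $[t/2,t]$'' (which is not established here) but instead multiplies by the weight: setting $v(x,t)=t^{(k+1)!}\nabla_{X_k}\!\cdots\!\nabla_{X_1}u(x,t)$ gives zero initial data, and Theorem~\ref{parabolic est} applied to $v$ together with $t^{(k+1)!}\|h\|_{C^{k+s+\alpha}}^{\,k}\le\big(t^{k!}\|h\|_{C^{k+s+\alpha}}\big)^{k}\le\Lambda_k^{k}$ closes the induction. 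Moreover, the paper runs this not on the solution directly but on the fixed-point map $\mathcal L$: one shows $\mathcal L$ preserves the class $Y_{k+1}$ carrying both the inductive bound and the $(k{+}1)$-st weighted bound, so the unique fixed point in $X$ automatically lies in $Y_{k+1}$.
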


Note that Theorem \ref{main thm num} implies that the solution of \eqref{the equation}  exists as long as its $C^{1+s+\alpha}$-norm stays small. This means that  the fractional mean curvature flow has a smooth solution as long as 
it stays $C^{1+s+\alpha}$-close to the initial set. We also remark that the exponent $k!$ in the final statement is not optimal and we expect the optimal exponent to be linear in $k$. However, the most important consequence of  the last inequality  is  that it quantifies  the smoothness  of $h(\cdot, t)$ for every $t \in  (0,T]$.  

\begin{proof}
\textbf{Step 1: } (\emph{Set-up and basic estimates.})

Let us write the equation \eqref{the equation}   as
\beq 
\label{the eq compact}
\pa_t h =   L[h]   +  P(x, h, \nabla h) -  H_\Sigma^s(x),
\eeq
where the remainder term is defined for a generic function   $u\in C^{\infty}(\Sigma)$  as 
\beq 
\label{the eq P}
P(x, u, \nabla u)  =   Q(x, u, \nabla u) \big( L[u]  - H_\Sigma^s(x) \big) + (1 +Q(x, u, \nabla u) ) \big( R_{1,u}(x)+ R_{2,u}(x) \, u  \big) .
\eeq
Recall that  $Q$ is a smooth function with $Q(x,0,0) = 0$ for all $x \in \Sigma$, and $R_{1,u}$ and $R_{2,u}$ are defined in \eqref{R_1} and \eqref{R_2} respectively. 

Let us first fix a small $\delta >0$ for which the  results in Section 4 hold.  Let us assume  that  
\[
\|u\|_{C^{1+s+\alpha}(\Sigma)} \leq  \delta \quad \text{ and } \quad  \|u\|_{C^{0}(\Sigma)} \leq \eps
\]
and  prove that this implies  
\beq 
\label{P estimate small}
\| P(x, u, \nabla u)\|_{C^{\alpha}(\Sigma)} \leq C \delta^2  + C_\delta \eps,
\eeq
when $\eps$ is small enough. Here $C_\delta$ depends on $\delta$.

First,   we have by the assumption  \eqref{obvious2}  that $\|\nu_\Sigma \|_{C^{1+s+\alpha}(\Sigma)} \leq C \eps^{-s-\alpha}$. Therefore $\|u\|_{C^{0}(\Sigma)} \leq \eps$  and  \eqref{obvious} applied to $u$  imply
\beq
\label{product2}
\|u\|_{C^\alpha(\Sigma)}  \|\nu_\Sigma\|_{C^{1+s+\alpha}(\Sigma)} \leq C \eps^{1-s-2\alpha} \leq \delta 
\eeq
when $\eps$ is small. In particular, these imply $\|u\, \nu_\Sigma\|_{C^{1+s+\alpha}(\Sigma)} \leq C\delta$.  It  follows from Proposition \ref{remainder estimate} that  
\[
\|R_{1,u}\|_{C^{\alpha}(\Sigma)} \leq C \delta  \, \|u\|_{C^{1+s+\alpha}(\Sigma)} \qquad \text{and} \qquad \|R_{2,u}\|_{C^{\alpha}(\Sigma)} \leq C \delta \|\nu\|_{C^{1+s+\alpha}(\Sigma)}. 
\]
The latter inequality and \eqref{product2} yield 
\[
\|R_{2,u} \, u \|_{C^{\alpha}(\Sigma)} \leq C  \delta^2 .
\]
Similarly it follows from Lemma \ref{lemma aux} that $\|L[u]\|_{C^{\alpha}(\Sigma)} \leq C \delta$.  Moreover, we may estimate  as with   \eqref{R_1 extra 2}   that 
\beq
\label{product3}
\|Q(x, u, \nabla u)\|_{C^{\alpha}(\Sigma)} \leq C ( \|u\|_{C^{1+ \alpha}(\Sigma)} +  \|u \nu_\Sigma \|_{C^{1+ \alpha}(\Sigma)}). 
\eeq
By the interpolation  inequality in Lemma \ref{aubinlemma}   we estimate
\[
\|u\|_{C^{1+ \alpha}(\Sigma)} +  \|u \nu_\Sigma \|_{C^{1+ \alpha}(\Sigma)}  \leq \delta  ( \|u\|_{C^{1+ s +\alpha}(\Sigma)} + \|u \nu_\Sigma \|_{C^{1+ s +\alpha}(\Sigma)} ) + C_{\delta} \|u\|_{C^{0}(\Sigma)},.
\]
 Hence, we have \eqref{P estimate small} by \eqref{product2} and by the fact that   $\|H_\Sigma^s\|_{C^\alpha(\Sigma)}$ is uniformly bounded for $\alpha < \frac{1-s}{2}$. 

We will also  'linearize' the equation \eqref{the eq P}. To this aim  we prove that if $v_1,v_2 \in C^{1+s+\alpha}(\Sigma)$ are such that 
 $\|v_i\|_{C^{1 + s+\alpha}(\Sigma)} \leq \delta$ and  $\|v_i\|_{C^{0}(\Sigma)} \leq \eps$, for $i=1,2$,  then it holds
\beq \label{hyodyllinen}
\|P(x, v_2, \nabla v_2) -  P(x, v_1, \nabla v_1) \|_{C^{\alpha}(\Sigma)} \leq C\delta \| v_2 - v_1\|_{C^{1 +s+\alpha}(\Sigma)} + C_{\Sigma, \delta}  \| v_2 - v_1\|_{C^0(\Sigma)}, 
\eeq
when $\eps$ is small enough. Here $C_{\Sigma, \delta} $ depends on $\delta$ and on $\|\nu_\Sigma\|_{C^{1+s+\alpha}(\Sigma)}$.

Indeed, we denote $w = v_2-v_1$ and write
\[
P(x, v_2, \nabla  v_2) -  P(x, v_1, \nabla v_1) = \int_0^1 \frac{d}{d \xi}  P\big(x, v_1 + \xi w, \nabla (v_1 + \xi   w) \big)  \, d \xi .
\]
Denote further $v_\xi = v_1 + \xi w$ and recall that \eqref{product2} holds also for $v_\xi$.  In particular, it holds $\|v_\xi \, \nu_\Sigma\|_{C^{1+s+\alpha}(\Sigma)} \leq C\delta$.  By recalling the definition of $P$ in \eqref{the eq P}  we obtain by differentiating 
\[
\begin{split}
 \frac{d}{d \xi}  &P(x, v_1 + \xi w, \nabla (v_1 + \xi   w)) \\
&=  \big(\frac{d}{d \xi}  Q (x, v_\xi, \nabla v_\xi)\big)   \big(L[v_\xi]  - H_\Sigma^s(x) + R_{1,v_\xi}(x)+ R_{2,v_\xi}(x) \, v_\xi\big) \\
&\,\,\,\,\,\,+ Q (x, v_\xi, \nabla v_\xi)  \, L[w]  + (1 + Q (x, v_\xi, \nabla v_\xi) ) \left( \frac{d}{d \xi} R_{1,v_\xi} + \frac{d}{d \xi} R_{2, v_\xi}  \, v_\xi  + R_{2, v_\xi} \, w\right)
\end{split}
\]
It follows from Proposition \ref{remainder estimate} that 
\[
\|R_{1,v_\xi} \|_{C^{\alpha}(\Sigma)} \leq C  \delta^2 \qquad \text{and} \qquad \|R_{2,v_\xi} \|_{C^{\alpha}(\Sigma)} \leq C  \delta \|\nu\|_{C^{1+s+\alpha}(\Sigma)}
\]
for all $\xi \in  (0,1)$. Moreover, we have by Proposition \ref{remainder linear} that 
\[
\|\frac{d}{d \xi} R_{1,v_\xi}\|_{C^{\alpha}(\Sigma)}  \leq  C\delta \|w\|_{C^{1 +s+\alpha}(\Sigma)} + C_{\Sigma}\|w\|_{C^{0}(\Sigma)}
\]
and
\[
\|\frac{d}{d \xi} R_{2,v_\xi}\|_{C^{\alpha}(\Sigma)}  \leq C \|\nu_\Sigma\|_{C^{1+s+\alpha}(\Sigma)} \|w\|_{C^{1 + s+\alpha}(\Sigma)} + C_{\Sigma}\|w\|_{C^{0}(\Sigma)}. 
\]
 Note that the latter inequality and  \eqref{product2} yield  
\[
\| \frac{d}{d \xi} R_{2, v_\xi}  \, v_\xi   \|_{C^{\alpha}(\Sigma)} \leq C \delta \|w\|_{C^{1 + s+\alpha}(\Sigma)} + C_{\Sigma}\|w\|_{C^{0}(\Sigma)}.
\]
Lemma \ref{lemma aux} implies
\[
\| L[w] \|_{C^{\alpha}(\Sigma)} \leq C \|w\|_{C^{1 +s+\alpha}(\Sigma)}
\]
and  
\[
\|L[v_\xi] \|_{C^{\alpha}(\Sigma)} \leq C \|v_\xi\|_{C^{1 + s+\alpha}(\Sigma)} \leq C \delta. 
\]
Finally we have by  \eqref{product2} and \eqref{product3} that
\[
\|Q(x, v_\xi, \nabla v_\xi )\|_{C^{\alpha}(\Sigma)}  \leq C ( \|v_\xi\|_{C^{1+ \alpha}(\Sigma)} +  \|v_\xi \nu_\Sigma \|_{C^{1+ \alpha}(\Sigma)}) \leq C \delta
\]
and since $Q$ is smooth  we have
\[
\|\frac{d}{d \xi}  Q (x, v_\xi, \nabla v_\xi)\|_{C^{\alpha}(\Sigma)} \leq C_\Sigma \|w\|_{C^{1+ \alpha}(\Sigma)} .
\]

By combining  the previous  estimates we obtain
\[
\begin{split}
 \|P(x, v_2, \nabla v_2) &-  P(x, v_1, \nabla v_1) \|_{C^{\alpha}(\Sigma)} \\
& \leq \int_0^1 \|\frac{d}{d \xi}  P\big(x, v_1 + \xi w, \nabla v_1 + \xi \nabla w \big) \|_{C^\alpha(\Sigma)}  \, d \xi \\
 &\leq C \delta  \|w\|_{C^{1+ s+\alpha}(\Sigma)} + C_\Sigma  \|w\|_{C^{1+\alpha}(\Sigma)} + C_{\Sigma}\|w\|_{C^{0}(\Sigma)}.
\end{split}
\]
The inequality  \eqref{hyodyllinen}  then follows from the interpolation inequality in Lemma \ref{aubinlemma}.

\medskip

\textbf{Step 2: } (\emph{Existence and Uniqueness of the strong solution.})

We define  $X$ as the space of function $ u \in C(\Sigma \times [0,T])$ such that $u \in X$ if  
\[
 \sup_{0 <t<T}\|u(\cdot,t)\|_{C^{1+s+\alpha}(\Sigma)} \leq \delta,  \quad  \sup_{0 < t<T}\|u(\cdot,t)\|_{C^{0}(\Sigma)} \leq \eps, \quad  \sup_{0 < t<T}\|\partial _t u(\cdot,t)\|_{C^{\alpha}(\Sigma)} \leq C  
\]
and $h(x,0) = h_0(x)$ for all $x \in \Sigma$. We choose $\delta>0$ so small that the results in Section 4 hold and $\eps>0$ even smaller if necessary and assume that $h_0$ satisfies \eqref{obvious2}.   Finally $C$ is a large constant which we choose later. 

We define a map $\mathcal{L} : X \to X$  such that for a given $h \in X$ the value   $\mathcal{L}[h] :=u$ is the solution of the following  linear equation with a forcing term 
\beq \label{mathcal L}
 \begin{cases}
 &\partial_t u  - \Delta^{\frac{1+s}{2}} u= c_s^2(x) h(\cdot,t)  + P(x, h(\cdot,t), \nabla h(\cdot,t))- H_\Sigma^s(x) \\
 &u(x,0) = h_0 .
\end{cases}
\eeq
Recall that by definition \eqref{linear op} $L[u] = \Delta^{\frac{1+s}{2}} u + c_s^2(x) u$. Therefore a fixed point of $\mathcal{L} : X \to X$  is a strong solution of  \eqref{the eq compact}.  By a strong solution we mean that $h : \Sigma \times [0,T] \to \R$ is Lipschitz continuous in  time,  $C^{1+s+\alpha}$-regular in space and  satisfies the equation \eqref{the eq compact} for almost every $t \in (0,T]$.

Let us first show that $\mathcal{L} : X \to X$   is well defined, i.e., $u$ defined by \eqref{mathcal L} belongs to $X$. By a standard approximation argument we may assume that $h_0$ and  $h \in X$ are  smooth.   By   Theorem \ref{parabolic est} the solution $u$   is smooth and  it holds 
\[
\begin{split}
\sup_{0 < t<T} \|u(\cdot,t)\|_{C^{0}(\Sigma)} \leq  \|h_0\|_{C^{0}(\Sigma)} +  
 T  \sup_{0<  t<T} \big( C \|h(\cdot,t)\|_{C^{0}(\Sigma)}   + \|P(x, h(\cdot,t), \nabla h(\cdot,t))\|_{C^{0}(\Sigma)} + \|H_\Sigma^s\|_{C^{0}(\Sigma)}  \big).
\end{split}
\]
Since we assume  $ \sup_{0\leq t<T}\|h(\cdot,t)\|_{C^{1+s+\alpha}(\Sigma)} \leq \delta$ and $ \sup_{0\leq t<T}\|h(\cdot,t)\|_{C^{0}(\Sigma)} \leq \eps$  we have by  \eqref{P estimate small} that 
\beq 
\label{step 2 3}
\|P(x, h(\cdot,t), \nabla h(\cdot,t))\|_{C^{\alpha}(\Sigma)}  \leq  C\delta^2 + C_\delta \eps
\eeq
for every $t \in (0,T]$. Therefore since $\|h_0\|_{C^{0}(\Sigma)} \leq  \eps/2$  we find
\[
\sup_{0 < t<T} \|u(\cdot,t)\|_{C^{0}(\Sigma)} \leq  \frac{\eps}{2} +  T  (C \eps +  C\delta^2 + C_\delta \eps + C) < \eps 
\]
when $T$ is small. Hence we have the second condition in the definition of $X$. 

In order to prove the first condition we recall that it holds
\[
  \|h(\cdot,t)\|_{C^{\alpha}(\Sigma)}  \leq   \|h(\cdot,t)\|_{C^{1}(\Sigma)}^\alpha   \|h(\cdot,t)\|_{C^{0}(\Sigma)}^{1-\alpha}  \leq C \eps^{1-\alpha}
\]
for every $t \in (0,T]$. We use  again  Theorem \ref{parabolic est},   \eqref{step 2 3} and  $\|h_0\|_{C^{1+s+\alpha}(\Sigma)}  \leq \frac{\delta}{2C}$ and find
\beq 
\label{step 2 2}
\begin{split}
&\sup_{0 < t<T} \|u(\cdot,t)\|_{C^{1+s+\alpha}(\Sigma)} \\
&\leq C \|h_0\|_{C^{1+s+\alpha}(\Sigma)}  + C \sup_{0< t<T} \big(   \|h(\cdot,t)\|_{C^{\alpha}(\Sigma)}   + \|P(x, h(\cdot,t), \nabla h(\cdot,t))\|_{C^{\alpha}(\Sigma)} + T \|H_\Sigma^s\|_{C^{1+s+\alpha}(\Sigma)} \big) \\
&\leq \frac{\delta}{2} + C \eps^{1-\alpha} + C\delta^2 + C_\delta \eps + C \|H_\Sigma^s\|_{C^{1+s+\alpha}(\Sigma)}  T,
\end{split}
\eeq
where $C$ is a uniform constant and  $C_\delta $ depends on $\delta$.  By choosing first $\delta$, then $\eps$ and finally $T$  small we find
\[
\sup_{0< t<T} \|u(\cdot,t)\|_{C^{1+s+\alpha}(\Sigma)}  < \delta
\]
 and the first condition follows. 

 Finally the bound $\sup_{0\leq t<T}\|\partial _t u(\cdot,t)\|_{C^{\alpha}(\Sigma)} \leq C $ follows from   the equation \eqref{mathcal L} and from \eqref{P estimate small} as  
\[
\begin{split}
\sup_{0 < t<T} \|L[u]  + P(x, h(\cdot,t), &\nabla h(\cdot,t))\|_{C^{\alpha}(\Sigma)} \\
&\leq \sup_{0< t<T} C( \|u(\cdot,t)\|_{C^{1+s+\alpha}(\Sigma)} + \|h(\cdot,t)\|_{C^{1+s+\alpha}(\Sigma)}) \leq C\delta + C_\delta \eps.
\end{split}
\]
Hence we conclude that $\mathcal{L}: X \to X$ is well defined. 

Let us next show that $\mathcal{L} : X \to X$  is a contraction with respect to the following norm 
\[
\|u\|_{X} := \sup_{0< t<T} \left(  \|u(\cdot,t)\|_{C^{1+s+\alpha}(\Sigma)}  +  \Lambda_0 \|u(\cdot,t)\|_{C^{0}(\Sigma)} \right) ,
\]
where $\Lambda_0$ is a large constant which will be chosen later.   Let us fix  $h_1, h_2 \in X$ and denote $u_1 = \mathcal{L}[h_1]$ and $ u_2 = \mathcal{L}[h_2]$. The function $v =u_2-u_1$ is a solution of  the equation
\beq \label{erotus}
\partial_t v - \Delta^{\frac{1+s}{2}} v= c_s^2(x)(h_2 - h_1) +  P(x, h_2, \nabla h_2) -  P(x, h_1, \nabla  h_1) 
\eeq
with $v(x,0) = 0 $ for all $x \in \Sigma$.

We denote $w = h_2-h_1$ and use \eqref{hyodyllinen} for $v_1(x) = h_1(x, t)$  and $v_2(x) = h_2(x, t)$ to conclude that 
\[
\begin{split}
 \|P(x, h_2(\cdot,t), \nabla h_2(\cdot,t)) -  &P(x, h_1(\cdot,t), \nabla h_1(\cdot,t))  \|_{C^{\alpha}(\Sigma)} \\
&\leq  C\delta \|w(\cdot,t)\|_{C^{1+s+\alpha}(\Sigma)} + C_{\Sigma,\delta}  \|w(\cdot,t)\|_{C^0(\Sigma)} .
\end{split}
\]
Therefore  the equation \eqref{erotus} and Theorem \ref{parabolic est} yield
\[
\sup_{0< t<T} \|v(\cdot,t)\|_{C^{0}(\Sigma)} \leq  CT \sup_{0\leq t<T}\left(  \delta \|w(\cdot,t)\|_{C^{1+s+\alpha}(\Sigma)}   +   C_{\Sigma,\delta}  \|w(\cdot,t)\|_{C^0(\Sigma)} \right).
\]
and
\[
\begin{split}
\sup_{0< t<T} \|v(\cdot,t)\|_{C^{1+s+ \alpha}(\Sigma)} &\leq C \sup_{0\leq t<T}\left(  \delta \|w(\cdot,t)\|_{C^{1+s+\alpha}(\Sigma)}   +  \|w(\cdot,t)\|_{C^\alpha(\Sigma)} +  C_{\Sigma,\delta}  \|w(\cdot,t)\|_{C^0(\Sigma)} \right)\\
&\leq C \sup_{0\leq t<T}\left(  \delta \|w(\cdot,t)\|_{C^{1+s+\alpha}(\Sigma)}   +   C_{\Sigma,\delta}  \|w(\cdot,t)\|_{C^0(\Sigma)} \right),
\end{split}
\]
where the last inequality follows from  the interpolation  inequality in Lemma \ref{aubinlemma}. We choose $\Lambda_0 \geq  \delta^{-1} C_{\Sigma,\delta}$ and $T\leq \Lambda_0^{-1}$ and have by the two above inequalities
\[
\begin{split}
\sup_{0 < t<T} \big( \|v(\cdot,t)\|_{C^{1+s+ \alpha}(\Sigma)} &+ \Lambda_0  \|v(\cdot,t)\|_{C^{\alpha}(\Sigma)} \big)\\
&\leq C \delta \sup_{0 < t<T} \big( \|w(\cdot,t)\|_{C^{1+s+ \alpha}(\Sigma)} + \Lambda_0  \|w(\cdot,t)\|_{C^{\alpha}(\Sigma)} \big).
\end{split}
\]
In other words
\[
\|u_2-u_1\|_X  = \|v\|_X  \leq C\delta \|w\|_X \leq  \frac{1}{2}\|h_2-h_1\|_X
\]
when $\delta$ is small.  Hence,  $\mathcal{L}: X \to X$ is a contraction and by a standard   fixed point argument we conclude 
that the equation  \eqref{the equation}  has a unique strong solution in $X$.

\medskip

\textbf{Step 3:} (\emph{Higher order regularity.})

We prove the last statement of the theorem. In fact, we prove slightly stronger estimate, i.e., for every $k \in \N$ there is $ \Lambda_k$ such that 
\beq \label{higher reg}
\sup_{0 <  t  < T} \big( t^{k!} \| h(\cdot,t)\|_{C^{k + s + \alpha}(\Sigma)}\big) \leq \Lambda_k .
\eeq  
In particular, this implies that $h(\cdot,t)$ is smooth for $t>0$. One may then use the equation  \eqref{the equation} to deduce that  $h \in  C^{\infty}(\Sigma \times (0,T])$.

Since $T <1$  we know by Step 2 that \eqref{higher reg} holds for $k = 0$.  We argue by induction  and assume that \eqref{higher reg} holds for $k \in \N$ and prove that  it holds also for $k +1$ with some large constant $\Lambda_{k+1} \geq \Lambda_k$. 
To this aim  we  fix   vector fields  $X_1, \dots, X_k \in \mathscr{T}(\Sigma)$  with $\|X_i\|_{C^{k+2}(\Sigma)} \leq 1$,  $i =1, \dots, k$, and  define the function space  $Y_{k+1} \subset X$ such that $u \in Y_{k+1}$ if   $u \in X$ (defined in the beginning of Step 2) and  
\beq \label{higher induction}
\sup_{0 <  t  < T} \big( t^{k!} \| u(\cdot,t)\|_{C^{k + s + \alpha}(\Sigma)}\big) \leq \Lambda_k \quad \text{and} \quad  \sup_{0 <  t  < T} \big( t^{(k+1)!} \|  \nabla_{X_k} \cdots \nabla_{X_1} u(\cdot,t)\|_{C^{1+ s + \alpha}(\Sigma)}\big) \leq \tilde{\Lambda}_{k+1}, 
\eeq
where  $\Lambda_k$ is the constant given by the induction assumption and $ \tilde \Lambda_{k+1}$ is a constant which we will fix later.  We note first that $Y_{k+1}$ is non-empty since at least the solution of 
the heat equation 
\[
\pa_t u = \Delta u  \quad \Sigma \times(0,T] \qquad \text{with } \quad u(x,0) = h_0(x) \, \text{ on } \, x \in \Sigma,
\]
belongs to $Y_{k+1}$ when $\tilde \Lambda_{k+1}$ is chosen  large enough.

Let $\mathcal{L}: X \to X$ be the map defined by \eqref{mathcal L}. The goal is to show that 
for $h \in Y_{k+1}$ it holds $u = \mathcal{L}(h) \in Y_{k+1}$, i.e., $\mathcal{L}(Y_{k+1}) \subset Y_{k+1}$. Therefore since the solution constructed in Step 2 is unique in $X$ we deduce that 
the solution belongs also to $Y_{k+1}$. In other words the solution of \eqref{the equation} satisfies
\[
\sup_{0 <  t  < T} \big( t^{(k+1)!} \|  \nabla_{X_k} \cdots \nabla_{X_1} h(\cdot,t)\|_{C^{1+ s + \alpha}(\Sigma)}\big) \leq \tilde{\Lambda}_{k+1}.
\]
Therefore it follows from  $T <1$  and from the fact that \eqref{higher reg} holds for $k$ that
\[
\frac{1}{C_k} \sup_{0 <  t  < T} \big( t^{(k+1)!} \|   h(\cdot,t)\|_{C^{k+1+ s + \alpha}(\Sigma)}\big) \leq   \tilde{\Lambda}_{k+1} + C_k \Lambda_k,
\]
which proves \eqref{higher reg}. We need thus to prove that $u$ satisfies the second inequality in \eqref{higher induction}. 

Let $u$ be the solution of \eqref{mathcal L} where $h_0$ and $h$ are  smooth function such that  $h \in Y_{k+1}$, i.e., $h$ satisfies \eqref{higher induction}. We denote  
\[
u_k := \nabla_{X_k} \cdots \nabla_{X_1} u \qquad \text{and} \qquad h_k := \nabla_{X_k} \cdots \nabla_{X_1} h.
\]
We claim that  $u_k$ is a solution of the equation 
\beq
\label{eq derivative}
\pa_t u_k -  \Delta^{\frac{1+s}{2}} u_k = P_k(x,t) \qquad \text{on } \, \Sigma \times(0,T],
\eeq 
 where  the function $P_k$ satisfies
\beq
\label{est derivative}
\|P_k(\cdot,t)\|_{C^{\alpha}(\Sigma)} \leq C \delta \|h_k(\cdot,t)\|_{C^{1+s + \alpha}(\Sigma)} + C_{k, \delta}\big(1 + \|u(\cdot,t)\|_{C^{k +s + \alpha}(\Sigma)} + \|h(\cdot,t)\|_{C^{k +s + \alpha}(\Sigma)}^{k}\big)
\eeq
for all $t \in (0,T]$. Here $C_{k,\delta}$ is a constant which depends on $k, \delta$ and $\Sigma$, while $C$ is a uniform constant.

Indeed, we first note that since $h$ is smooth then the equation \eqref{mathcal L} and   Theorem \ref{parabolic est}  imply that  $u$ is smooth. We may thus differentiate \eqref{mathcal L}  and obtain
by Proposition \ref{change order} that 
\beq \label{eq derivoitu}
\pa_t u_k  -  \Delta^{\frac{1+s}{2}} u_k= \nabla_{X_k} \cdots \nabla_{X_1} \big(c_s^2(x) h +   P(x, h, \nabla h)- H_\Sigma^s(x) \big) + \pa^k u(x,t), 
\eeq
where $\pa^k u(x,t)$ denotes a function which satisfies   $\|\pa^k u(x,t)\|_{C^\alpha(\Sigma)} \leq  C_k \|u(\cdot,t)\|_{C^{k+s+\alpha}(\Sigma)}$ for every $t \in (0,T]$. Recall that the function $P$ is defined in \eqref{the eq P}. We use Leibniz rule and  Proposition \ref{change order}  to deduce
\beq \label{tosi pitka}
\begin{split}
\nabla_{X_k} \cdots \nabla_{X_1}  P(x, h, \nabla h) = &\big(\nabla_{X_k} \cdots \nabla_{X_1} Q(x, h, \nabla h) \big) \big(L[h] + R_{1,h}(x)+ R_{2,h}(x) \, h - H_\Sigma^s(x) \big)  \\
&+ (1+Q(x, h, \nabla h) ) \big( \nabla_{X_k} \cdots \nabla_{X_1} R_{1,h} +  \nabla_{X_k} \cdots \nabla_{X_1} (R_{2,h} \, h)  \big) \\
&+  Q(x, h, \nabla h) L[ h_k] +  f_k(x,t),
 \end{split}
\eeq
where $f_k$ is a function which satisfies 
\[
\begin{split}
\|f_k(\cdot,t)\|_{C^{\alpha}(\Sigma)} \leq  &C_k \sum_{j=1}^{k-1} \|Q(x, h, \nabla h)\|_{C^{j+\alpha}(\Sigma)} \,  \|L[h]  + R_{1,h}+ R_{2,h} \, h - H_\Sigma^s\|_{C^{k-j+\alpha}(\Sigma)} \\
&+ C_k( 1+ \| h\|_{C^{k+s+\alpha}(\Sigma)}).
\end{split}
\]
We have 
\[
 \|Q(x, h, \nabla h)\|_{C^{j+\alpha}(\Sigma)} \leq C_j(1+   \|h\|_{C^{j+1+\alpha}(\Sigma)}^j)
\]
and Proposition \ref{change order} yields
\[
 \|L[h] \|_{C^{j+\alpha}(\Sigma)} \leq C_j  \|h\|_{C^{j+1+s+\alpha}(\Sigma)}. 
\]
Moreover by Lemma \ref{remainder derivative} we have
\[
 \|R_{1,h} \|_{C^{j+\alpha}(\Sigma)} +  \|R_{1,h} \|_{C^{j+\alpha}(\Sigma)} \leq  C_j(1+   \|h\|_{C^{j+1+\alpha}(\Sigma)}^j).
\]
Therefore it holds
\beq \label{estimate f_k}
\|f_k(\cdot,t)\|_{C^{\alpha}(\Sigma)} \leq  C_k(1 +  \|h(\cdot,t)\|_{C^{k+\alpha}(\Sigma)}^k)
\eeq
for all $t \in (0,T]$.

We use \eqref{vaihto kikka} to conclude that 
\[
\| \nabla_{X_k} \cdots \nabla_{X_1} Q(x, h(\cdot,t), \nabla h(\cdot,t))\|_{C^\alpha(\Sigma)} \leq C_k (1+ \| h_k(\cdot,t) \|_{C^{1+\alpha}(\Sigma)} +  \| h(\cdot,t) \|_{C^{k+s+\alpha}(\Sigma)}^{k})
\]
for all $t \in (0,T]$. By  the interpolation inequality in Lemma \ref{aubinlemma} and by Young's inequality  we  have
\[
\| \nabla_{X_k} \cdots \nabla_{X_1} Q(x, h(\cdot,t), \nabla h(\cdot,t))\|_{C^\alpha(\Sigma)} \leq  \delta \| h_k(\cdot,t) \|_{C^{1+s+\alpha}(\Sigma)}  + C_{k,\delta}( 1 + \| h(\cdot,t) \|_{C^{k+s+\alpha}(\Sigma)}^{k}).
\]
Recall that by \eqref{product2}  the assumption $h \in  X$ implies  $\|\nu_\Sigma \|_{C^{1+s+\alpha}(\Sigma)} \|h(\cdot,t)\|_{C^{\alpha}(\Sigma)}  \leq \delta$ for every $t \in (0,T)$ when $\eps$ is small.  Then Lemma \ref{remainder derivative} yields
\[
\begin{split}
\|\nabla_{X_k} \cdots \nabla_{X_1} R_{1,h(\cdot,t)} \|_{C^\alpha(\Sigma)} &+\| \nabla_{X_k} \cdots \nabla_{X_1} (R_{2,h(\cdot,t)} h(\cdot,t) )  \|_{C^\alpha(\Sigma)} \\
&\leq C \delta \| h_k(\cdot,t) \|_{C^{1+s+\alpha}(\Sigma)}  + C_{k} ( 1 + \| h(\cdot,t) \|_{C^{k+s+\alpha}(\Sigma)}^{k}).
\end{split}
\]
Finally, by \eqref{product3} and by Lemma \ref{lemma aux} we have 
\[
\begin{split}
\| Q(x, h(\cdot,t), \nabla h(\cdot,t)) L[ h_k(\cdot,t)] \|_{C^\alpha(\Sigma)} &\leq C ( \|h(\cdot,t)\|_{C^{1+ \alpha}(\Sigma)} +  \|h(\cdot,t) \nu_\Sigma \|_{C^{1+ \alpha}(\Sigma)}) \| h_k(\cdot,t) \|_{C^{1+s+\alpha}(\Sigma)} \\
&\leq C\delta  \| h_k(\cdot,t) \|_{C^{1+s+\alpha}(\Sigma)} .
\end{split}
\]
The equation \eqref{eq derivative} and the estimate  \eqref{est derivative} then follows from the previous inequalities, from  \eqref{eq derivoitu}  \eqref{tosi pitka} and \eqref{estimate f_k} and from 
\[
\|L[h] + R_{1,h}(x)+ R_{2,h}(x) \, h - H_\Sigma^s(x) \|_{C^\alpha(\Sigma)} \leq C. 
\]

Let us then  prove  that $u \in Y_{k+1}$. We define $v(x,t) = t^{(k+1)!} u_k(x,t)$. Since $u_k$  is a solution of \eqref{eq derivative} then $v$ is a solution of 
\[
\pa_t v - \Delta^{\frac{1+s}{2}} v =  t^{(k+1)!} P_k(x,t) + (k+1)!  t^{(k+1)!-1} u_k(x,t)  \qquad \text{on } \, \Sigma \times (0,T],
\]
with $v(x,0) = 0$. Theorem \ref{parabolic est} and \eqref{est derivative} imply (recall that $T <1$)
\[
\begin{split}
\sup_{0<t<T}&\| v(\cdot,t) \|_{C^{1+ s+ \alpha}(\Sigma)} \\
&\leq C  \sup_{0<t<T}\big(t^{(k+1)!} \| P_k(\cdot,t) \|_{C^{\alpha}(\Sigma)} + (k+1)!    t^{k!} \| u_k(\cdot,t) \|_{C^{\alpha}(\Sigma)} \big)  \\
&\leq  \sup_{0<t<T}  \big(C \delta  \, t^{(k+1)!}\|h_k(\cdot,t)\|_{C^{1+s + \alpha}(\Sigma)}+  C_{k,\delta}  \big( 1+   t^{k!} \|u(\cdot,t)\|_{C^{k +s + \alpha}(\Sigma)} +t^{(k+1)!}\|h(\cdot,t)\|_{C^{k +s + \alpha}(\Sigma)}^k    \big) .
\end{split}
\]
Recall that we assume $ \sup_{0<t<T}  t^{k!} \|u(\cdot,t)\|_{C^{k +s + \alpha}(\Sigma)}  \leq \Lambda_k$ and  $\sup_{0<t<T}  t^{k!} \|h(\cdot,t)\|_{C^{k +s + \alpha}(\Sigma)}  \leq \Lambda_k$. 
In particular, the latter implies
\[
\sup_{0<t<T}  t^{(k+1)!} \|h(\cdot,t)\|_{C^{k +s + \alpha}(\Sigma)}^k  \leq  \sup_{0<t<T}  \left(t^{k!} \|h(\cdot,t)\|_{C^{k +s + \alpha}(\Sigma)}\right)^k \leq \Lambda_k^k.
\]
Therefore we have
\[
\sup_{0<t<T}\| v(\cdot,t) \|_{C^{1+ s+ \alpha}(\Sigma)} \leq  C \delta   \sup_{0<t<T} \, t^{(k+1)!}\|h_k(\cdot,t)\|_{C^{1+s + \alpha}(\Sigma)}+  C_{k,\delta}( 1+   \Lambda_k + \Lambda_k^k).
\]
Since we assume that the  second inequality  in \eqref{higher induction} holds for $h_k = \nabla_{X_k} \cdots \nabla_{X_1} h$, the above inequality yields
\[
\sup_{0<t<T}\| \nabla_{X_k} \cdots \nabla_{X_1} u(\cdot,t) \|_{C^{1+ s+ \alpha}(\Sigma)} \leq  C \delta  \tilde{\Lambda}_{k+1}+  C_{k,\delta}( 1+   \Lambda_k + \Lambda_k^k).
\]
Let us first choose $\delta$ such that $C \delta \leq \frac14$ and then $\tilde{\Lambda}_{k+1} = 4C_{k,\delta}( 1+   \Lambda_k + \Lambda_k^k) $. This  gives us 
\[
\sup_{0<t<T}\| \nabla_{X_k} \cdots \nabla_{X_1} u(\cdot,t) \|_{C^{1+ s+ \alpha}(\Sigma)} \leq  \frac12 \tilde{\Lambda}_{k+1}.
\]
Therefore $u$ satisfies the second inequality in  \eqref{higher induction}  and we conclude that $u \in Y_{k+1}$. 

\end{proof}

We conclude this section by showing how to modify the previous proof to obtain a result  analogous  to  Theorem \ref{main thm num} for the volume preserving fractional mean curvature flow \eqref{flow vol}. We use the same parametrization as in Section 3 to 
describe the motion of $E_0 \in \mathfrak{h}_\delta(\Sigma)$  given by \eqref{flow vol}. If $E \in  \mathfrak{h}_\delta(\Sigma)$ with $\pa E = \{ x+ h(x)\nu(x) : x \in \Sigma\}$ then by \eqref{fracMC lin} 
and by change of variables  we have
\[
-\fint_{\pa E} H_E^s(x) \, d \Ha_x^n =\left( \int_{\Sigma}  J_{\Phi}(x) \, d \Ha_x^n \right)^{-1} \left( \int_{\Sigma} \big(L[h] -H_\Sigma^s +R_{1,h}(x) + R_{2,h}(x) h \big) J_{\Phi}(x) \, d \Ha_x^n\right),
\]
where $J_{\Phi}$ denotes the tangential Jacobian of $\Phi(x) = x+ h(x)\nu(x)$. As we mentioned in Section 3 the tangential Jacobian can be written as 
$J_{\Phi}(x) = 1 + Q_3(x,h, \nabla h)$, where $Q_3$ is a smooth function such that  $Q_3(x,0,0) = 0$ for all $x \in \Sigma$. Recall that $L[h]$ is defined in \eqref{linear op} and notice 
that for $h \in C^{1+s+\alpha}(\Sigma)$ it holds
\[
\int_\Sigma \Delta^{\frac{1+s}{2}} h(x) \, d \Ha_x^n = 0. 
\]
Let us then define  the number
\beq \label{def R_3}
\bar{R}_{3,u} := \frac{\int_{\Sigma} \big(-c_s^2(x) u(x)  + H_\Sigma^s(x) - R_{1,u}(x) - R_{2,u}(x) u(x) \big) (1 + Q_3(x,u, \nabla u) ) \, d \Ha_x^n }{\int_{\Sigma}   1 + Q_3(x,u, \nabla u) \, d \Ha_x^n}.
\eeq
That is $\bar{R}_{3,u} = \bar{H}_E^s$.

We obtain  immediately the following result which is analogous  to Proposition \ref{final equation}.
\begin{proposition}
\label{final equation 2}
Assume that $E_0 \in \mathfrak{h}_\delta(\Sigma)$ for  $\delta$ small. There exists a flow  $(E_t)_{t \in (0,T]}$ with $E_t \in \mathfrak{h}_\delta(\Sigma)$, for all $t \in (0,T]$, starting from $E_0$ 
which is a classical solution of \eqref{flow vol}  if and only if there exists a classical solution $h \in C(\Sigma \times [0,T]) \cap C^\infty(\Sigma \times (0,T])$  of 
\beq
\label{the eq vol}
\begin{cases}
&\pa_t h =   L[h]   +  \tilde{P}(x, h, \nabla h) -  H_\Sigma^s(x) \quad \text{on } \, \Sigma \times (0,T]\\
&h(x,0) = h_0(x) \qquad \text{for  } \, x \in  \Sigma ,
\end{cases}
\eeq
with $\sup_{0<t<T} \|h(\cdot,t)\|_{C^{1+s+\alpha}(\Sigma)} \leq \delta$.  Here $\tilde{P}$ is defined for a generic function $u \in C^\infty(\Sigma)$ as
\[
 \tilde{P}(x, u, \nabla u) =  P(x, u, \nabla u) +   (1+ Q(x,u,\nabla u)) \bar{R}_{3,u},
\] 
where $ P(x, u, \nabla u) $ is given by \eqref{the eq P}, $\bar{R}_{3,u}$ is defined in \eqref{def R_3} and $Q$ is a smooth function such that  $Q(x,0,0) = 0$ for all $x \in \Sigma$. 
\end{proposition}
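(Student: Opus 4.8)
The plan is to adapt, essentially verbatim, the derivation of Proposition~\ref{final equation}, the only structural novelty being the presence of the nonlocal average $\bar H^s_{E_t}$ in \eqref{flow vol}. First I would fix $\delta$ small, work with a set $E\in\mathfrak{h}_\delta(\Sigma)$ with $\pa E=\{x+h(x)\nu(x):x\in\Sigma\}$, and assemble the two ingredients already established in Section~3: by \eqref{normals} the normal velocity of a flow $(E_t)_t$ with $\pa E_t=\{x+h(x,t)\nu(x)\}$ equals $\pa_t h/(1+Q(x,h,\nabla h))$, where $Q$ is smooth with $Q(x,0,0)=0$; and by \eqref{fracMC lin}, evaluated at $\Phi_t(x)=x+h(x,t)\nu(x)$,
\[
-H^s_{E_t}(\Phi_t(x))=L[h](x)-H^s_\Sigma(x)+R_{1,h(\cdot,t)}(x)+R_{2,h(\cdot,t)}(x)\,h(x,t).
\]
Then I would record the identity $\bar H^s_{E_t}=\bar R_{3,h(\cdot,t)}$, with $\bar R_{3,\cdot}$ as in \eqref{def R_3}: this follows from the change of variables $\Phi_t\colon\Sigma\to\pa E_t$ applied simultaneously to $\int_{\pa E_t}H^s_{E_t}\,d\Ha^n$ and to $\Ha^n(\pa E_t)=\int_\Sigma J_{\Phi_t}\,d\Ha^n$, from the expansion $J_{\Phi_t}=1+Q_3(x,h,\nabla h)$, and from the mean–zero identity $\int_\Sigma\Delta^{\frac{1+s}{2}}h\,d\Ha^n=0$.

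With these three facts in hand the forward implication is immediate. If $(E_t)_t$ is a classical solution of \eqref{flow vol} with $E_t\in\mathfrak{h}_\delta(\Sigma)$, then $h(\cdot,t)$ is the well–defined height function over $\Sigma$, it inherits the continuity, the smoothness for $t>0$ and the initial value $h_0$ from the defining map $\Psi$, and substituting the three displays above into $V_t=-(H^s_{E_t}-\bar H^s_{E_t})$ and multiplying by $1+Q(x,h,\nabla h)$, which is positive for $\delta$ small, produces precisely \eqref{the eq vol} with $\tilde P=P+(1+Q)\bar R_{3,h}$ and $P$ as in \eqref{the eq P}. For the converse I would run the computation in reverse: starting from a solution $h\in C(\Sigma\times[0,T])\cap C^\infty(\Sigma\times(0,T])$ of \eqref{the eq vol} with $h(\cdot,0)=h_0$ and $\sup_{0<t<T}\|h(\cdot,t)\|_{C^{1+s+\alpha}(\Sigma)}\le\delta$, the sets $E_t$ defined by $\pa E_t=\{x+h(x,t)\nu(x):x\in\Sigma\}$ lie in $\mathfrak{h}_\delta(\Sigma)$ for $\delta$ small, the map $(x,t)\mapsto x+h(x,t)\nu(x)$ extends near $\Sigma$ and is cut off to a map $\Psi$ of the regularity demanded by the definition of a classical solution exactly as in the unconstrained case, and dividing \eqref{the eq vol} by $1+Q$ and invoking \eqref{normals}, \eqref{fracMC lin} and $\bar H^s_{E_t}=\bar R_{3,h(\cdot,t)}$ shows that $(E_t)_t$ solves \eqref{flow vol} on $(0,T]$ with initial datum $E_0$.

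The step that is not a literal transcription of the proof of Proposition~\ref{final equation} is the derivation of $\bar H^s_{E_t}=\bar R_{3,h(\cdot,t)}$, and this is the only place where I expect some (minor) care to be needed: one must perform the change of variables $\Phi_t$ consistently in the numerator and denominator of the average defining $\bar H^s_{E_t}$, and use $\int_\Sigma\Delta^{\frac{1+s}{2}}h\,d\Ha^n=0$ to eliminate the fractional–Laplacian part of $L[h]$ from the average, leaving the integrand $(-c_s^2 h+H^s_\Sigma-R_{1,h}-R_{2,h}h)(1+Q_3)$ of \eqref{def R_3}. Everything else — positivity of $1+Q$ for small $\delta$, the smooth dependence of $Q$ and $Q_3$ on their arguments with vanishing at the origin, the invertibility of the height–function parametrization on $\mathfrak{h}_\delta(\Sigma)$, and the extension of $x+h(x,t)\nu(x)$ to an admissible $\Psi$ — is identical to Section~3, so no new estimates are involved.
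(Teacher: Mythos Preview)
Your proposal is correct and follows essentially the same approach as the paper: the paper derives the identity $\bar H_E^s=\bar R_{3,h}$ by the change of variables $\Phi\colon\Sigma\to\pa E$ together with $J_\Phi=1+Q_3$ and $\int_\Sigma\Delta^{\frac{1+s}{2}}h\,d\Ha^n=0$, and then states that Proposition~\ref{final equation 2} follows ``immediately'' from this and the parametrization of Section~3, which is precisely what you outline.
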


Arguing as with \eqref{P estimate small}  we deduce that if  $u $ is such that  $\| u \|_{C^{1+s+\alpha}(\Sigma)} \leq \delta$ and  $\| u \|_{C^{0}(\Sigma)} \leq \eps$ with $\eps$ small enough it holds 
\[
\|\bar{R}_{3,u} \|_{C^\alpha(\Sigma)} = |\bar{R}_{3,u}| \leq C\delta^2 + C_\delta \eps + |\bar{H}_\Sigma^s|. 
\]
Similarly, we argue as with \eqref{hyodyllinen} and obtain for $v_1, v_2$ with  $\| v_i \|_{C^{1+s+\alpha}(\Sigma)} \leq \delta$ and $\| v_i \|_{C^{\alpha}(\Sigma)} \leq \eps$ , $i=1,2$,  that 
\[
\|\bar{R}_{3,v_2} - \bar{R}_{3,v_1}  \|_{C^\alpha(\Sigma)} \leq C\delta\| v_2 - v_1 \|_{C^{1+s+\alpha}(\Sigma)} + C_{\Sigma,\delta} \|v_2 - v_1 \|_{C^0(\Sigma)}. 
\]
Therefore we obtain by \eqref{P estimate small}  that 
\beq \label{hyo vol 1}
\|\tilde{P}(x, u, \nabla u) \|_{C^\alpha(\Sigma)} \leq C\delta^2 + C_\delta \|u \|_{C^0(\Sigma)}  + |\bar{H}_\Sigma^s|
\eeq
and by \eqref{hyodyllinen} that 
\beq \label{hyo vol 2}
\| \tilde{P}(x, v_2, \nabla v_2)-  \tilde{P}(x, v_1, \nabla v_1)  \|_{C^\alpha(\Sigma)} \leq C\delta\| v_2 - v_1 \|_{C^{1+s+\alpha}(\Sigma)} + C_{\Sigma,\delta} \|v_2 - v_1 \|_{C^0(\Sigma)}. 
\eeq
We may thus use the argument in Step 2 in the proof of  Theorem \ref{main thm num} to obtain the unique strong solution of \eqref{the eq vol}.  The smoothness of the strong solution 
follows immediately from  Step 3 since $\bar{R}_{3,h(\cdot, t)}$ does not depend on $x$.  We have thus the following result. 

\begin{theorem}
\label{main thm num 2}
Let $0 < \alpha (1-s)/2$. Assume  $\Sigma \subset \R^{n+1}$ is a smooth compact hypersurface and $h_0 : \Sigma \to \R$ is such that  \eqref{obvious2} holds. 
For $\delta$ and $\eps$  small enough,  there is $T \in (0,1)$, depending on $\delta$ and $\eps$,  such that  \eqref{the eq vol}  has a unique classical   solution  $h \in C(\Sigma \times [0,T]) \cap C^{\infty}(\Sigma \times (0,T])$ with 
\[
\sup_{0<t<T} \|h(\cdot,t)\|_{C^{1+s+\alpha}(\Sigma)} \leq \delta.
\]
 Moreover,  for every $k\in \N$  there is a constant $\Lambda_k$ such that 
\[
\sup_{0 <  t  < T} \big( t^{k!} \| h(\cdot,t)\|_{C^{k}(\Sigma)}\big) \leq \Lambda_k.
\]
\end{theorem}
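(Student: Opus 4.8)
The plan is to run, essentially verbatim, the three-step scheme that proves Theorem \ref{main thm num}, with $P$ replaced by the nonlinearity $\tilde P$ of Proposition \ref{final equation 2} and with the estimates \eqref{hyo vol 1} and \eqref{hyo vol 2} used in place of \eqref{P estimate small} and \eqref{hyodyllinen}. By Proposition \ref{final equation 2} it suffices to produce a classical solution $h \in C(\Sigma \times [0,T]) \cap C^\infty(\Sigma \times (0,T])$ of \eqref{the eq vol} with $\sup_{0<t<T}\|h(\cdot,t)\|_{C^{1+s+\alpha}(\Sigma)} \leq \delta$ and with the stated higher-order bounds. The one structural point to keep in mind is that the scalar $\bar R_{3,u}$ is \emph{not} small: taking $u = 0$ in \eqref{def R_3} gives $\bar R_{3,0} = \bar H_\Sigma^s$, a fixed (uniformly bounded, since $\|H_\Sigma^s\|_{C^\alpha(\Sigma)}$ is uniformly bounded for $\alpha<(1-s)/2$) geometric quantity. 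Accordingly one writes $\tilde P(x,u,\nabla u) - H_\Sigma^s(x) = \hat P(x,u,\nabla u) - \big(H_\Sigma^s(x) - \bar H_\Sigma^s\big)$, where $\hat P(x,u,\nabla u) := P(x,u,\nabla u) + (1+Q(x,u,\nabla u))\bar R_{3,u} - \bar H_\Sigma^s$ collects the genuinely small contributions and $H_\Sigma^s - \bar H_\Sigma^s$ is a fixed, time-independent forcing playing exactly the role of $-H_\Sigma^s$ in Step 2 of the proof of Theorem \ref{main thm num}.

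First I would record the two a priori estimates for $\hat P$ that replace those of Step 1. Applying \eqref{hyo vol 2} with $v_1 = 0$, $v_2 = u$ (legitimate since $\|0\|_{C^{1+s+\alpha}(\Sigma)}\le\delta$ and $\|0\|_{C^{\alpha}(\Sigma)}\le\eps$) gives $|\bar R_{3,u} - \bar H_\Sigma^s| \le C\delta^2 + C_{\Sigma,\delta}\eps$; combined with \eqref{P estimate small} and with $\|Q(x,u,\nabla u)\|_{C^\alpha(\Sigma)} \le C\delta + C_\delta\eps$ (from \eqref{product3} and the interpolation inequality of Lemma \ref{aubinlemma}), this yields $\|\hat P(\cdot,u,\nabla u)\|_{C^\alpha(\Sigma)} \le C\delta^2 + C_\delta\eps$ whenever $\|u\|_{C^{1+s+\alpha}(\Sigma)}\le\delta$ and $\|u\|_{C^0(\Sigma)}\le\eps$. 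Likewise \eqref{hyo vol 2} (which itself follows from \eqref{hyodyllinen} together with the Lipschitz dependence of $\bar R_{3,u}$ on $u$) gives $\|\hat P(x,v_2,\nabla v_2) - \hat P(x,v_1,\nabla v_1)\|_{C^\alpha(\Sigma)} \le C\delta\|v_2 - v_1\|_{C^{1+s+\alpha}(\Sigma)} + C_{\Sigma,\delta}\|v_2 - v_1\|_{C^0(\Sigma)}$.

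With these in hand Step 2 goes through unchanged: on the space $X$ of the proof of Theorem \ref{main thm num}, let $\mathcal L$ send $h\in X$ to the solution $u$ of $\partial_t u - \Delta^{\frac{1+s}{2}}u = c_s^2(x)h(\cdot,t) + \hat P(x,h(\cdot,t),\nabla h(\cdot,t)) - (H_\Sigma^s(x) - \bar H_\Sigma^s)$ with $u(\cdot,0) = h_0$, so that a fixed point of $\mathcal L$ solves \eqref{the eq vol}. After a standard approximation we may take $h_0$, $h$ smooth, apply Theorem \ref{parabolic est} with the time-independent part $g = -(H_\Sigma^s - \bar H_\Sigma^s)$ (whose norm then carries a factor $T$), and deduce $\mathcal L(X)\subset X$ by choosing $\delta$, then $\eps$, then $T$ small, exactly as in \eqref{step 2 2}; the contraction of $\mathcal L$ in the norm $\|\cdot\|_X$ follows from the Lipschitz bound above, Theorem \ref{parabolic est} and Lemma \ref{aubinlemma}, just as in \eqref{erotus}. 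Thus \eqref{the eq vol} has a unique strong solution $h$ with $\sup_{0<t<T}\|h(\cdot,t)\|_{C^{1+s+\alpha}(\Sigma)}\le\delta$ and $\sup_{0<t<T}\|h(\cdot,t)\|_{C^0(\Sigma)}\le\eps$. For the higher regularity I would repeat Step 3 with the single observation that $\bar R_{3,h(\cdot,t)}$ does not depend on $x$: applying $\nabla_{X_k}\cdots\nabla_{X_1}$ to the extra term $(1+Q(x,h,\nabla h))\bar R_{3,h}$ only produces $\bar R_{3,h}\,\nabla_{X_k}\cdots\nabla_{X_1}Q(x,h,\nabla h)$ plus lower-order pieces, all controlled exactly like the corresponding $Q$-terms already estimated in \eqref{tosi pitka}. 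Consequently the differentiated equation \eqref{eq derivative} and the forcing estimate \eqref{est derivative} hold with the identical form, the induction producing \eqref{higher reg} goes through verbatim, and we obtain $h\in C^\infty(\Sigma\times(0,T])$ with $\sup_{0<t<T}(t^{k!}\|h(\cdot,t)\|_{C^k(\Sigma)})\le\Lambda_k$; Proposition \ref{final equation 2} then converts $h$ into the desired classical solution $(E_t)_{t\in(0,T]}$ of \eqref{flow vol}.

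The main obstacle, and the only genuinely new ingredient compared with Theorem \ref{main thm num}, is the nonlocal nonlinear average $\bar R_{3,u}$. Two things must be checked: (i) that $u\mapsto\bar R_{3,u}$ is Lipschitz in the $C^{1+s+\alpha}$/$C^0$ scale, which reduces, through the explicit formula \eqref{def R_3} and the representation $J_\Phi = 1 + Q_3(x,u,\nabla u)$ with $Q_3(x,0,0)=0$, to the estimates already established for $R_{1,u}$ and $R_{2,u}$ in Propositions \ref{remainder estimate} and \ref{remainder linear} (this is precisely what underlies \eqref{hyo vol 1}--\eqref{hyo vol 2}); and (ii) that its non-smallness does not destroy the perturbative structure, which is why one peels off the constant $\bar H_\Sigma^s$ and absorbs $H_\Sigma^s - \bar H_\Sigma^s$ into the time-independent forcing handled by the $T$-factor in Theorem \ref{parabolic est}. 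Once these two points are settled the argument is a line-by-line repetition of the proof of Theorem \ref{main thm num}.
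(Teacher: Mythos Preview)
Your proposal is correct and follows essentially the same approach as the paper: establish the analogues \eqref{hyo vol 1}--\eqref{hyo vol 2} of \eqref{P estimate small}--\eqref{hyodyllinen}, then rerun Steps 2 and 3 of Theorem \ref{main thm num}, using that $\bar R_{3,h(\cdot,t)}$ is independent of $x$ so that Step 3 carries over without change. Your explicit peeling-off of the constant $\bar H_\Sigma^s$ (writing $\tilde P - H_\Sigma^s = \hat P - (H_\Sigma^s - \bar H_\Sigma^s)$ so that the genuinely small part $\hat P$ plays the role of $f$ and the fixed $H_\Sigma^s - \bar H_\Sigma^s$ plays the role of $g$ in Theorem \ref{parabolic est}) is a clean way to justify the paper's terse ``use the argument in Step 2'', since \eqref{hyo vol 1} alone leaves an $|\bar H_\Sigma^s|$ contribution without a factor of $T$ in the $C^{1+s+\alpha}$ estimate \eqref{step 2 2}; the paper does not spell this out, but some device of this kind is needed and yours works.
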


Thereom \ref{main thm num 2} together with Proposition \ref{final equation 2} proves the main theorem for the volume preserving flow \eqref{flow vol}.

\appendix
\section{}

Here we give the proof of Theorem \ref{parabolic est}.  We first recall  the result  in the case $\Sigma = \R^n$ and then use a  perturbation argument to prove it for  a compact  and smooth hypersurface. 
 The following result  can be found  in \cite{MP}.
\begin{theorem}
\label{parabolic estflat}
Assume that $f : \R^n \times [0,T] \to \R$ is smooth and   $|f(x,t)| \leq C(1+ |x|)^{-n-1-s}$ for all $(x,t) \in \Sigma \times [0,T]$. Assume that  $u$ with   $\text{supp} \, u(\cdot,t) \subset B_1$ for all $t \in [0,T]$   is the  solution of
\[
\begin{cases}
&\partial_t u = \Delta^{\frac{s+1}{2}} u + f(x,t)    \quad \text{in   \, }\R^n \times(0,T]\\
 &u(x,0) = 0.
\end{cases}
\]
Then it holds 
\[
\sup_{0<t<T} \|u(\cdot,t)\|_{C^{1+s+\alpha}(\R^n)} \leq   C \sup_{0<t<T}\|f(\cdot,t)\|_{C^{\alpha}(\R^n)} .
\]
\end{theorem}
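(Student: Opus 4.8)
The plan is to write $u$ via Duhamel's formula against the kernel of the fractional heat semigroup and then estimate its $C^{1+s+\alpha}$-norm by splitting the time integral at the right scale; as always in parabolic Schauder theory the only delicate point is the H\"older estimate of the top-order part. Let $K_t$ be the fundamental solution of $\pa_t - \Delta^{\frac{1+s}{2}}$ on $\R^n$, so $\widehat{K_t}(\xi) = e^{-c\,t|\xi|^{1+s}}$. By classical stationary-phase estimates on the Fourier side one has $K_t \geq 0$, $\int_{\R^n}K_t = 1$, the self-similarity $K_t(x) = t^{-\frac{n}{1+s}}K_1(t^{-\frac{1}{1+s}}x)$, and the pointwise bounds $|D^kK_1(x)| \leq C_k(1+|x|)^{-n-1-s}$ for every $k$; in particular $\|D^kK_t\|_{L^1(\R^n)} \leq C_k t^{-\frac{k}{1+s}}$ and $\int_{\R^n}D^kK_t = 0$ for $k \geq 1$. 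Since $f$ is smooth and decays like $(1+|x|)^{-n-1-s}$, the convolution $K_\sigma * f(\cdot,\tau)$ converges absolutely with the same decay, so
\[
u(x,t) = \int_0^t \big(K_{t-\tau}*f(\cdot,\tau)\big)(x)\,d\tau
\]
is the (smooth) solution. Since $1+s+\alpha \in (1,2)$, it suffices to bound $\sup_t\|u(\cdot,t)\|_{C^0}$, $\sup_t\|\nabla u(\cdot,t)\|_{C^0}$, $\sup_t[u(\cdot,t)]_{C^{s+\alpha}}$ and $\sup_t[\nabla u(\cdot,t)]_{C^{s+\alpha}}$ by $C\sup_t\|f(\cdot,t)\|_{C^{\alpha}}$ (allowing $C$ to depend on $T$).

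The zeroth- and first-order bounds are immediate: $\|u(\cdot,t)\|_{C^0} \leq \int_0^t\|K_\sigma\|_{L^1}\|f(\cdot,\tau)\|_{C^0}\,d\sigma \leq t\sup_\tau\|f(\cdot,\tau)\|_{C^0}$, and, writing $\nabla u(x,t) = \int_0^t\int_{\R^n}\nabla K_\sigma(z)\big(f(x-z,\tau)-f(x,\tau)\big)\,dz\,d\tau$ (legitimate since $\int\nabla K_\sigma = 0$) and using $\int|\nabla K_\sigma(z)||z|^{\alpha}\,dz \leq C\sigma^{\frac{\alpha-1}{1+s}}$, one gets $\|\nabla u(\cdot,t)\|_{C^0} \leq C[f]_{C^\alpha}\int_0^t\sigma^{\frac{\alpha-1}{1+s}}\,d\sigma \leq Ct^{\frac{s+\alpha}{1+s}}\sup_\tau[f(\cdot,\tau)]_{C^\alpha}$, the integral converging because $\frac{\alpha-1}{1+s} > -1$. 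The H\"older seminorm $[u(\cdot,t)]_{C^{s+\alpha}}$ is handled the same way. What remains, and what carries the whole estimate, is the bound on $[\nabla u(\cdot,t)]_{C^{s+\alpha}}$.

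Fix $t$ and $x_1, x_2$ with $h := |x_1-x_2| \leq 1$, and write
\[
\nabla u(x_1,t)-\nabla u(x_2,t) = \int_0^t\int_{\R^n}\big(\nabla K_\sigma(x_1-z)-\nabla K_\sigma(x_2-z)\big)f(z,\tau)\,dz\,d\tau, \qquad \sigma = t-\tau.
\]
Split the $\sigma$-integral at $\sigma = h^{1+s}$. In the \emph{near} range $\sigma \leq h^{1+s}$ one treats the two terms separately, using $\int\nabla K_\sigma = 0$ to replace $f(z,\tau)$ by $f(z,\tau)-f(x_i,\tau)$ in the $i$-th term, which gives a contribution $\leq C[f]_{C^\alpha}\int_0^{h^{1+s}}\sigma^{\frac{\alpha-1}{1+s}}\,d\sigma = C[f]_{C^\alpha}h^{s+\alpha}$. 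In the \emph{far} range $\sigma > h^{1+s}$ one applies the mean value theorem to the kernel, $|\nabla K_\sigma(x_1-z)-\nabla K_\sigma(x_2-z)| \leq h\sup_{\R^n}|D^2K_\sigma(\,\cdot\,-z)|$, replaces $f(z,\tau)$ by $f(z,\tau)-f(x_1,\tau)$, and uses $\int|D^2K_\sigma(w)|(|w|+h)^{\alpha}\,dw \leq C\big(\sigma^{\frac{\alpha-2}{1+s}} + h^{\alpha}\sigma^{-\frac{2}{1+s}}\big)$, which yields a contribution $\leq Ch[f]_{C^\alpha}\int_{h^{1+s}}^{\infty}\big(\sigma^{\frac{\alpha-2}{1+s}}+h^{\alpha}\sigma^{-\frac{2}{1+s}}\big)\,d\sigma = C[f]_{C^\alpha}h^{s+\alpha}$; here the tail integral converges at $\sigma = \infty$ precisely because $\alpha < 1-s$, i.e. $\frac{\alpha-2}{1+s} < -1$. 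Adding the two ranges gives $[\nabla u(\cdot,t)]_{C^{s+\alpha}} \leq C\sup_\tau[f(\cdot,\tau)]_{C^\alpha}$, which completes the proof.

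The one genuinely delicate step is the far-range term above: it is the only place where the hypothesis $\alpha < 1-s$ is used (through the borderline convergence $\int^{\infty}\sigma^{\frac{\alpha-2}{1+s}}\,d\sigma < \infty$), and it relies on the sharp second-derivative bound $|D^2K_\sigma(x)| \leq C\sigma^{-\frac{n+2}{1+s}}\big(1+\sigma^{-\frac{1}{1+s}}|x|\big)^{-n-1-s}$ for the fractional heat kernel, whose polynomial tail must be exactly $n+1+s$ — this is what makes the decay hypothesis on $f$ natural and consistent. A shorter alternative is a Littlewood--Paley argument: decomposing $f$ and $u$ into frequency blocks $\sim 2^j$, on which the semigroup contributes a factor $e^{-c\,t\,2^{j(1+s)}}$, everything reduces to $\int_0^{\infty}2^{j(1+s+\alpha)}e^{-c\,t\,2^{j(1+s)}}\,\frac{d(2^j)}{2^j} < \infty$, again valid exactly when $\alpha < 1-s$.
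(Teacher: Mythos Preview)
The paper does not actually prove this statement: it is quoted as a known result from \cite{MP} (Mikulevicius--Pragarauskas) and used as a black box in the proof of Theorem~\ref{parabolic est}. Your write-up supplies a direct proof via Duhamel's formula and the scaling/decay properties of the fractional heat kernel, which is precisely the standard route behind such references; the argument is correct. The key computation --- splitting the $\sigma$-integral at $h^{1+s}$ and using the bound $\int|D^2K_\sigma(w)|(|w|+h)^{\alpha}\,dw \leq C\big(\sigma^{\frac{\alpha-2}{1+s}}+h^{\alpha}\sigma^{-\frac{2}{1+s}}\big)$ on the far piece --- is right, and you correctly identify that convergence of $\int^{\infty}\sigma^{\frac{\alpha-2}{1+s}}\,d\sigma$ forces $\alpha<1-s$, which is exactly the standing hypothesis in the paper (Theorem~\ref{parabolic est} fixes $\alpha\in(0,1-s)$, and elsewhere even $\alpha<(1-s)/2$).

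Two small remarks. First, the compact-support hypothesis on $u$ is not used in your argument and indeed is not needed for the estimate itself; in the paper it appears only because the function $w$ to which the theorem is applied (in the perturbation proof of Theorem~\ref{parabolic est}) is a cut-off of $u$ and hence compactly supported by construction. Second, your closing Littlewood--Paley sketch is slightly misleading: in the Besov formulation one bounds $\sup_j 2^{j(1+s+\alpha)}\|P_j u\|_{L^\infty}$ by $\sup_j 2^{j(1+s)}\int_0^t e^{-c\sigma 2^{j(1+s)}}\,d\sigma\cdot[f]_{C^\alpha}\leq C[f]_{C^\alpha}$ directly, and the restriction $\alpha<1-s$ enters only through the identification $B^{1+s+\alpha}_{\infty,\infty}=C^{1+s+\alpha}$ when $1+s+\alpha\notin\N$, not through a divergent integral. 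This does not affect your main argument.
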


\begin{proof}[\textbf{Proof of Theorem \ref{parabolic est}}]
The existence and uniqueness of the weak solution follows from  Galerkin method and the smoothness 
follows by differentiating the equation with respect to time. Since the argument is standard we omit it and simply refer to \cite{Evans}. 

Let us first prove the second inequality. It is clear that we may assume  $g = 0$. We write $u = v + w$ where 
\[
\begin{cases}
&\partial_t v = \Delta^{\frac{s+1}{2}} v  \quad \text{on } \, \Sigma \times (0,T]\\
 &v(x,0) = u_0(x)
\end{cases} \qquad \text{and} \qquad 
\begin{cases}
&\partial_t w = \Delta^{\frac{s+1}{2}} w + f(x,t)  \quad \text{on } \, \Sigma \times (0,T]\\
 &w(x,0) = 0.
\end{cases}
\]
By maximum principle it holds $ |v(x, t)| \leq |u_0(x)|$ for all $(x,t) \in \Sigma \times (0,T]$. Let us then prove 
\[
w(x,t) \leq T \sup_{x \in \Sigma, t \in (0,T]}| f(x,t)| \qquad \text{for all } \, (x,t) \in \Sigma \times (0,T].
\]
To this aim define $\tilde w(x,t) = t^{\eps -1}w(x,t)$ for $\eps >0$. Then $\tilde w$ is continuous on $\Sigma \times [0,T]$, $\tilde{w}(x,0) = 0$  and assume it 
attains its maximum at $(\hat x, \hat t) \in \Sigma \times (0,T]$. By maximum principle it holds
\[
0 \leq \pa_t\tilde w(\hat x, \hat t) = t^{\eps -1} \pa_t w(\hat x,\hat t) - (1- \eps) \hat t^{\eps -2} w(\hat x,\hat t)
\] 
and
\[
0 \geq \Delta^{\frac{1+s}{2}} \tilde w(\hat x, \hat t) = t^{\eps -1} \Delta^{\frac{1+s}{2}} w(\hat x, \hat t).
\] 
Then the equation for $w$ implies
\[
\hat{t}^{\eps -2} w(\hat x, \hat t) \leq \frac{\hat{t}^{\eps-1} |f(\hat x, \hat t)| }{1-\eps}.
\]
Therefore, because $(\hat x, \hat t)$ is the maximum point,  it holds for all $(x,t) \in \Sigma \times (0,T]$
\[
t^{\eps -1} w(x,t)  \leq \hat{t}^{\eps -1} w(\hat x, \hat t) \leq \frac{T^\eps}{1-\eps}  \sup_{x \in \Sigma, t \in (0,T]}| f(x,t)|.
\]
The estimate  follows by letting $\eps \to 0$. By repeating the argument for $-w$ we obtain the second inequality in Theorem \ref{parabolic est}.

Let us prove the first inequality in Theorem \ref{parabolic est}. We may assume that $g = u_0 = 0$, since the general case follows   by considering  the function $v(x,t) = u(x,t) -tg(x)- u_0(x)$.

Let us  fix $x_0\in \Sigma$ and without loss of generality we may assume that $x_0 = 0$ and $\nu(0) = e_{n+1}$.  Since $\Sigma$ is smooth and uniformly $C^{1,1}$-regular we may write it locally as a graph of a smooth function, i.e.,  there exists  a smooth function $\phi:   B_{2r} \subset \R^n \to \R$ such that 
\[
\Sigma \cap C_r= \lbrace (x',x_{n+1}) \in \R^{n+1}: x_{n+1}=\phi (x') \rbrace,
\]
where $C_r$ denotes the cylinder
\[
C_r=\lbrace x=(x',x_{n+1}) \in \R^{n+1} : |x'|< r , \,  |x_{n+1}|<r   \rbrace .
\]
Note that the assumption $x_0 =0 $ and $\nu(0) = e_{n+1}$ implies  $\phi(0)=0 $, $D\phi (0)=0$ and 
\beq \label{utile2}
\|\phi\|_{C^{1+s+\alpha}(B_r)}< C r^{1-s-\alpha} .
\eeq 
Let $\zeta:\R^+ \rightarrow \R$ be a smooth cut-off function such that $\zeta(\rho)=1$ for $\rho \in [0, r/2]$ and $\zeta (\rho)=0$ for $\rho \geq r$.

Let us denote $\Sigma_r = \Sigma \cap C_r$.  The above  notation in mind we may write the equation in  \eqref{parabolic eq}  for $x \in \Sigma_r$ as
\beq \label{app1}
\begin{split}
    \partial_t u (x,t)&= 2\int_\Sigma \zeta(|y_{n+1}|) \zeta(|y'|) \frac{u(y,t)-u(x,t)}{|y-x|^{n+1+s}}\, d\Ha^n_y\\
&\,\,\,\,\,\,\,\,\,\,+2\int_\Sigma\left(1-\zeta(|y_{n+1}|)\zeta(|y'|)\right)\frac{u(y,t)-u(x,t)}{|y-x|^{n+1+s}}\, d\Ha^n_y  +f(x,t)\\
       & =2\int_{\Sigma_r}\zeta(|y'|)  \frac{u(y,t)-u(x,t)}{|y-x|^{n+1+s}}\, d\Ha^n_y + G_1(x,t) + f(x,t),
\end{split}
\eeq
where 
\[
G_1(x,t)  =2 \int_\Sigma\left(1-\zeta(|y_{n+1}|)\zeta(|y'|)\right)\frac{u(y,t)-u(x,t)}{|y-x|^{n+1+s}}\, d\Ha^n_y .
\]
Since the function  $1-\zeta(|y_{n+1}|)\zeta(|y'|)$ vanishes on $\Sigma \cap C_{r/2}$ the above intergal is non-singular on $\Sigma \cap C_{r/4}$ and we have 
\beq \label{app2}
\sup_{0 < t <T} || \zeta(4|x'|) G_1(x,t)||_{C^\alpha(\Sigma_r)} \leq C \sup_{0 < t <T} ||u(\cdot,t)||_{C^\alpha(\Sigma)}. 
\eeq

We may write every $x \in \Sigma_r$ as $x =(x', \phi(x'))$, where $x' \in B_r \subset \R^n$. We denote, by slight  abuse of notations,  $u((x',\phi(x')),t)=u(x',t)$ and similarly $G_1(x',t)$,  $f(x',t)$ and $g(x')$ for every point $x =(x', \phi(x'))$ on $\Sigma_r$. By change of variables we have
\[
\int_{\Sigma_r}\zeta(|y'|)  \frac{u(y,t)-u(x,t)}{|y-x|^{n+1+s}}\, d\Ha^n_y = \int_{B_{r}} \zeta(|y'|) \frac{u(y',t)-u(x',t)}{\left(|x'-y'|^2+(\phi(y')-\phi (x'))^2\right)^{\frac{n+1+s}{2}}}\sqrt{1+|D\phi (y')|^2}\, dy'.
\]
We define  $Q(z)  := \sqrt{1+|z|^2} -1$ and 
\[
K_\phi(y',x') :=  \frac{1}{\left(|x'-y'|^2+(\phi(y')-\phi (x'))^2\right)^{\frac{n+1+s}{2}}} .
\]
 Note that $Q$ is a smooth function with $Q(0) = 0$ and $K_\phi(y',x')$ agrees with \eqref{kernel 1} when we choose $\Sigma = \R^n \times \{ 0\} \simeq \R^n$ and $u = \phi$.  Note also that by \eqref{utile2}
$\phi$ satisfies 
\beq \label{app3}
\|\phi\|_{C^{1+s+\alpha}(B_r)}< \delta
\eeq
when $r$ is small enough.   Using  this notation we may write 
\[
\int_{\Sigma_r}\zeta(|y'|)  \frac{u(y,t)-u(x,t)}{|y-x|^{n+s}}d\Ha^n_y = \int_{B_{r}} \zeta(|y'|) \big(u(y',t)-u(x',t)\big)\big(1+ Q(D\phi(y'))\big)   K_\phi(y',x')  \, dy'.
\]

Let us  define $w(x',t) = \zeta(4|x'|)u(x',t)$ and extend $\phi$ to $\R^n$ such that \eqref{app3} holds in $\R^n$.  Then we have by \eqref{app1} and by the above calculations
\beq \label{app4}
\begin{split}
  \partial_t w(x',t) &=2 \int_{B_{r}} \zeta(4|x'|) \zeta(|y'|) \big(u(y',t)-u(x',t)\big) \big(1+ Q(D\phi(y'))\big) K_\phi(y',x') \,  dy' \\
&\,\,\,\,\,\,\,\,\,\,\,\,\,\,\qquad +  \zeta(4|x'|)(G_1(x',t)+ f(x',t) )\\
&=  2\int_{\R^n} \zeta(4|x'|) \zeta(|y'|) \big(u(y',t)-u(x',t)\big) \big(1+ Q(D\phi(y'))\big)  K_\phi(y',x')  \, dy' \\
&\,\,\,\,\,\,\,\,\,\,\,\,\,\,\qquad +  \zeta(4|x'|)(G_1(x',t)+ f(x',t) ).
\end{split}
\eeq
We write
\beq \label{app5}
\begin{split}
\zeta(4|x'|) &\zeta(|y'|) \big(u(y',t)-u(x',t)\big)  \\
&=  \big(w(y',t)- w(x',t)\big) \zeta(|y'|) - \big(\zeta(4|y'|) - \zeta(4|x'|) \big) \zeta(|y'|)u(y',t) \\
&=  \big(w(y',t)- w(x',t)\big)  - \big(1- \zeta(|y'|\big) \big(w(y',t)- w(x',t)\big) \\
& \,\,\,\,\,\,\,\,\,\,\,\,\,\,\,- \big(\zeta(4|y'|) - \zeta(4|x'|) \big) \zeta(|y'|)u(y',t) .
\end{split}
\eeq
We recall that $Q(0)= 0$   and  write 
\beq \label{app6}
\big(1+ Q(D\phi(y'))\big)    K_\phi(y',x') - \frac{1}{|y'-x'|^{n+1+s}} = \int_0^1 \frac{d}{d \xi} \big( (1+ Q(\xi D\phi(y')))   K_{\xi\phi}(y',x')\big) \, d \xi.
\eeq
By combining \eqref{app4}, \eqref{app5} and \eqref{app6} we obtain
\beq \label{app7}
\begin{cases}
 &\partial_t w(x',t) = \Delta^{\frac{s+1}{2}} w(x',t) + F(x',t) - G_2(x,t) - G_3(x,t)  +  \zeta(4|x'|)(G_1(x',t)+ f(x',t) ) \\
&w(x',0) = 0,
\end{cases}
\eeq
 where
\[
F(x',t) = 2 \int_0^1  \int_{\R^n}   \big(w(y',t)-w(x',t)\big) \frac{d}{d \xi} \big( (1+ Q(\xi D\phi(y')))   K_{\xi\phi}(y',x')\big) \, dy'  d \xi,
\]
\[
 G_2(x,t) =  2\int_{\R^n} \big(1- \zeta(|y'|)\big) \big(w(y',t)- w(x',t)\big) \big(1+ Q(D\phi(y'))\big)K_\phi(y',x')  \, dy' 
\]
and
\[
 G_3(x,t) =  2\int_{\R^n} \big(\zeta(4|y'|) - \zeta(4|x'|) \big) \zeta(|y'|)u(y',t) \big(1+ Q(D\phi(y'))\big)K_\phi(y',x')  \, dy'.
\]

We need to estimate the $C^\alpha$-norms of $F$, $G_2$ and $G_3$. Note first that trivially  $\|w(\cdot,t)\|_{C^\gamma(\R^n)} \leq C_\gamma \|u(\cdot,t)\|_{C^\gamma(\Sigma)}$ for all $\gamma \in (0,1)$. Since  $(1- \zeta(|y'|))$ vanishes for $|y'| \leq r/2 $ and $w(y',t)$ vanishes for $|y'| \geq r/4$  we have, similarly as with \eqref{app2},  that
\beq \label{app8}
\sup_{0 < t <T} ||G_2(\cdot,t)||_{C^\alpha(\R^n)} \leq C \sup_{0 < t <T} ||u(\cdot,t)||_{C^\alpha(\Sigma)}. 
\eeq
Next we recall that by Remark \ref{flat S kappa 2} and by \eqref{app3}, Lemma \ref{lemma aux} and Lemma \ref{kernel lemma} hold also  for  $\Sigma = \R^n$ and $K_\phi$.
Hence, we conclude by  Lemma \ref{lemma aux} that 
\beq \label{app9}
\sup_{0 < t <T} ||G_3(\cdot,t)||_{C^\alpha(\R^n)} \leq C \sup_{0 < t <T} ||u(\cdot,t)||_{C^{s+\alpha}(\Sigma)}. 
\eeq
Similarly we observe that the  term  $F$ is of type \eqref{tarviit monesti} with $v = w(\cdot,t)$ and $u = \phi$. Therefore \eqref{tarviit} and  \eqref{app3}   yield
\beq \label{app10}
||F(\cdot,t)||_{C^\alpha(\R^n)} \leq C \delta  ||w(\cdot,t)||_{C^{1+s+\alpha}(\R^n)}  
\eeq
 for every $t \in (0,T]$.

We conclude by \eqref{app2}, \eqref{app7}, \eqref{app8}, \eqref{app9}, \eqref{app10} and by Theorem   \ref{parabolic estflat} that 
\[
\sup_{0<t<T} \|w(\cdot,t)\|_{C^{1+s+\alpha}(\R^n)} \leq C\delta \sup_{0<t<T}   ||w(\cdot,t)||_{C^{1+s+\alpha}(\R^n)}   +  C\sup_{0<t<T} ( ||f(\cdot,t)||_{C^{\alpha}(\Sigma)} + ||u(\cdot,t)||_{C^{s+\alpha}(\Sigma)}).
\]
When $\delta$ is small we have
\[
\sup_{0<t<T} \|w(\cdot,t)\|_{C^{1+s+\alpha}(\R^n)} \leq   C\sup_{0<t<T} ( ||f(\cdot,t)||_{C^{\alpha}(\Sigma)} + ||u(\cdot,t)||_{C^{s+\alpha}(\Sigma)}).
\]
Note that $ \|u(\cdot,t)\|_{C^{1+s+\alpha}(\Sigma \cap C_{r/8})} \leq  C \|w(\cdot,t)\|_{C^{1+s+\alpha}(\R^n)} $ for every $t \in (0,T]$. Therefore since $\Sigma$ is compact 
we obtain  by  standard  covering argument  
\beq \label{app11}
\sup_{0<t<T} \|u(\cdot,t)\|_{C^{1+s+\alpha}(\Sigma)} \leq   C\sup_{0<t<T} ( ||f(\cdot,t)||_{C^{\alpha}(\Sigma)} + ||u(\cdot,t)||_{C^{s+\alpha}(\Sigma)}).
\eeq
By the interpolation inequality in Lemma \ref{aubinlemma}  and by the second inequality in Theorem \ref{parabolic est} (recall that $u_0 = g= 0$) we have  for all $t \in (0,T]$ 
\[
\begin{split}
||u(\cdot,t)||_{C^{s+\alpha}(\Sigma)} &\leq \delta \|u(\cdot,t)\|_{C^{1+s+\alpha}(\Sigma)}  + C_\delta  \|u(\cdot,t)\|_{C^0(\Sigma)}  \\
&\leq   \delta \|u(\cdot,t)\|_{C^{1+s+\alpha}(\Sigma)}  +  C_\delta T \sup_{0<t<T}  ||f(\cdot,t)||_{C^{0}(\Sigma)}.
\end{split}
\]
The claim then follows from \eqref{app11} by choosing  $\delta$ small. 
\end{proof}

\vspace{4pt}
\noindent

\section*{Acknowledgments}
\noindent
The first author was supported by the Academy of Finland grant 314227. 

\vspace{4pt}

\end{document}